\documentclass[11pt,reqno]{amsart}
\usepackage{amsmath,amssymb,amsthm,amsfonts}
\usepackage[margin=1.15in]{geometry}
\usepackage[colorlinks=true,linkcolor=blue,citecolor=blue,urlcolor=blue]{hyperref}

\newtheorem{theorem}{Theorem}[section]
\newtheorem{proposition}[theorem]{Proposition}
\newtheorem{lemma}[theorem]{Lemma}
\newtheorem{corollary}[theorem]{Corollary}
\theoremstyle{definition}
\newtheorem{definition}[theorem]{Definition}
\newtheorem{remark}[theorem]{Remark}
\newtheorem{convention}[theorem]{Convention}
\newtheorem{problem}[theorem]{Problem}
\newtheorem*{theoremA}{Theorem A}
\newtheorem*{theoremB}{Theorem B}
\newtheorem*{theoremC}{Theorem C}
\newtheorem*{conjectureY}{Yau's Conjecture}
\numberwithin{equation}{section}

\newcommand{\bS}{\mathbb{S}}
\newcommand{\bR}{\mathbb{R}}
\newcommand{\bC}{\mathbb{C}}
\newcommand{\bT}{\mathbb{T}}
\newcommand{\bZ}{\mathbb{Z}}
\newcommand{\bK}{\mathbb{K}}
\newcommand{\cG}{\mathcal{G}}
\newcommand{\cB}{\mathcal{B}}
\newcommand{\cT}{\mathcal{T}}
\newcommand{\cK}{\mathcal{K}}
\newcommand{\cJ}{\mathcal{J}}
\newcommand{\cR}{\mathcal{R}}
\newcommand{\cE}{\mathcal{E}}
\newcommand{\Ric}{\operatorname{Ric}}
\newcommand{\arcosh}{\operatorname{arcosh}}
\newcommand{\sech}{\operatorname{sech}}
\newcommand{\Line}{\operatorname{Line}}
\newcommand{\avg}{\operatorname{avg}}
\newcommand{\dd}{\,d}
\newcommand{\rx}{\mathrm{x}}
\newcommand{\ry}{\mathrm{y}}
\newcommand{\rz}{\mathrm{z}}
\newcommand{\Tfl}{\bT_{\mathrm{fl}}}

\begin{document}

\title[Yau's conjecture for stacked Clifford tori]
{Yau's conjecture for the stacked Clifford tori of Wiygul}
\author{Alexander Pigazzini}
\address{}
\email{}
\subjclass[2020]{Primary 53A10, 58J50; Secondary 53C42, 35P15, 05C50}
\keywords{Yau's conjecture, first eigenvalue, minimal surfaces in the three-sphere,
gluing constructions, Clifford torus, balancing conditions, path graph}

\begin{abstract}
We prove Yau's conjecture $\lambda_{1}=2$ for the stacked Clifford tori of Wiygul: for all integers
$N\ge2$, $k,\ell\ge1$ and every sufficiently large $m$, every closed embedded minimal surface arising from Wiygul's construction, of genus $k\ell m^{2}(N-1)+1$ in the round three-sphere and resembling $N$ parallel copies of the Clifford torus joined by small catenoidal tunnels, has first Laplace eigenvalue $2$. For $N\ge3$ these surfaces are chains rather than doublings, and the even--odd decomposition on which all previous verifications for gluing constructions rest is not available. The reflection lemma of Choe and Soret reduces the
problem to the sector of functions invariant under the symmetry group of the construction, and we show that the lowest nonzero eigenvalue of that sector equals $4+O(m^{-1})$. The value $4$ is the outcome of an exact identity in the limiting weighted graph model: the limiting waist ratios of the
construction form the Perron vector of the adjacency operator of the line graph of a path, so that the spectral gap of the path cancels against the total conductance of the tunnels prescribed by the balancing conditions, and what survives is the coefficient of the Jacobi operator of the Clifford torus. The analytic input consists
of a conformally invariant channel inequality on a cylinder and of a Poincar\'e inequality on a periodically perforated torus. The properties of the construction on which the argument rests are isolated in a reduction theorem for closed surfaces decomposed into blocks joined by families of
thin channels along the edges of a finite graph, subject to a symmetry assumption and to Poincar\'e and trace inequalities on the blocks.
\end{abstract}

\maketitle
\tableofcontents

\section{Introduction}\label{sec:intro}

\subsection{Yau's conjecture}
Let $\Sigma^{n}$ be a closed minimal hypersurface of the round unit sphere $\bS^{n+1}\subset\bR^{n+2}$ and let $0=\lambda_{0}(\Sigma)<\lambda_{1}(\Sigma)\le\lambda_{2}(\Sigma)
\le\cdots$ be the spectrum of its Laplace--Beltrami operator. By a theorem of Takahashi \cite{Takahashi} the restrictions to $\Sigma$ of the coordinate functions $x_{1},\dots,x_{n+2}$ of
$\bR^{n+2}$ satisfy \begin{equation}\label{eq:takahashi}
\Delta_{\Sigma}x_{i}+n\,x_{i}=0 ,
\end{equation}
so that each $x_{i}|_{\Sigma}$ has vanishing mean and is admissible in the variational
characterisation of $\lambda_{1}$; hence $\lambda_{1}(\Sigma)\le n$. In his problem list Yau
\cite[Problem 100]{Yau} asked whether equality holds for embedded hypersurfaces.

\begin{conjectureY}
If $\Sigma^{n}$ is a closed embedded minimal hypersurface of $\bS^{n+1}$, then
$\lambda_{1}(\Sigma)=n$.
\end{conjectureY}

To our knowledge the conjecture is open in every dimension $n\ge2$. An affirmative answer implies genus-dependent area bounds for embedded minimal surfaces of $\bS^{3}$ \cite{YangYau} and, through the theorem of Montiel and Ros on minimal immersions by first eigenfunctions \cite{MontielRos},
Lawson's conjecture that the Clifford torus is the only embedded minimal torus of $\bS^{3}$, proved
by Brendle \cite{Brendle}. Two kinds of partial results are known. Choi and Wang \cite{ChoiWang}
proved the universal bound $\lambda_{1}\ge n/2$ by Reilly's formula on the two components of
$\bS^{n+1}\setminus\Sigma$, and quantitative refinements were obtained in \cite{DSS,JTZ}; the resulting universal lower bounds remain strictly below $n$. In the other direction, equality has been established on explicit families: Tang and Yan \cite{TangYan} for all minimal isoparametric hypersurfaces, Choe and Soret \cite{ChoeSoret} for the surfaces of Lawson \cite{Lawson} and of Karcher, Pinkall and Sterling \cite{KPS}, and Kapouleas and McGrath \cite{KM24} for the doublings of the equatorial two-sphere constructible by the linearised doubling theorem of \cite{KMcjm}, which develops
\cite{Kapouleas}, and for the doublings of the Clifford torus constructed in \cite{KM24}.

\subsection{Doublings and chains}\label{ss:chains}
All verifications for gluing constructions concern \emph{doublings}: two nearly parallel copies of
a base surface joined by small catenoidal tunnels. There, exactly or after a small perturbation of
the metric, an involutive isometry exchanges the two copies, and each eigenspace splits into an even
part, controlled by comparison with the base surface, and an odd part, controlled by the tunnels.
This dichotomy is the backbone of \cite{KM24}.

For $N\ge3$ copies arranged in a chain there is no involution whose orbits are the copies and no
such dichotomy. The functions which are constant on each copy form an $N$-dimensional space, and
their Rayleigh quotients are governed by the combinatorial Laplacian $L(P_{N})$ of the path graph on
$N$ vertices, whose spectral gap $2(1-\cos(\pi/N))$ decays like $\pi^{2}N^{-2}$. If the tunnels
joining consecutive copies had a conductance independent of $N$, the lowest such quotient would
tend to zero and long chains would violate Yau's conjecture. That this does not happen, and why, is
the content of this paper.

\subsection{Stacked Clifford tori}
Wiygul \cite{Wiygul}, extending the doubling construction of Kapouleas and Yang \cite{KY}, proved
that for all integers $N\ge2$, $k,\ell\ge1$ and every sufficiently large $m$ there is a closed
embedded minimal surface $\Sigma[N,k,\ell,m]\subset\bS^{3}$ of genus $k\ell m^{2}(N-1)+1$, invariant
under a group $\cG[k,\ell,m]\cong D_{km}\times D_{\ell m}$ of isometries, which resembles $N$ nested
small perturbations of a fixed Clifford torus $\bT$ joined by $k\ell m^{2}$ small catenoidal tunnels
between each pair of neighbouring tori, the tunnels of consecutive layers sitting over two
interlaced rectangular lattices; as $m\to\infty$ the surfaces converge to $\bT$ with multiplicity
$N$. The surface is obtained as a normal graph over an explicit initial surface, and every quantity
of the construction which enters the present argument (lattices, gluing radius, waist radii,
balancing conditions, conformal lengths of the tunnels) is explicit. In Section \ref{sec:surfaces}
we reproduce these data and we isolate the single property of the final surface which is used,
namely that its induced metric is uniformly close to that of the initial surface; we verify from the
final estimate of \cite{Wiygul} that the surfaces constructed there have this property. A closed
embedded minimal surface with these features is called a \emph{stacked Clifford torus of type
$(N,k,\ell,m)$} (Definition \ref{def:stacked}); all the surfaces of \cite{Wiygul} are of this kind
(Corollary \ref{cor:wiygulW}).

\subsection{Results}
Throughout, $\cG=\cG[k,\ell,m]$ is the symmetry group of the construction, and the
$\cG$-invariant sector of the Laplacian of $\Sigma$ is its restriction to $\cG$-invariant functions.

\begin{theoremA}
Let $N\ge2$ and $k,\ell\ge1$ be integers. There exists $m_{0}=m_{0}[N,k,\ell]$ such that for every
$m\ge m_{0}$ every stacked Clifford torus $\Sigma$ of type $(N,k,\ell,m)$ satisfies
\[
\lambda_{1}(\Sigma)=2 ,
\]
and the smallest nonzero eigenvalue of the $\cG$-invariant sector of the Laplacian of $\Sigma$
equals $4+O(m^{-1})$, the constant in the error term depending on $N$, $k$ and $\ell$. In
particular, after enlarging $m_{0}$ if necessary, Yau's conjecture holds for every surface
constructed in \cite{Wiygul} with $m\ge m_{0}$.
\end{theoremA}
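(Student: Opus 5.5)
The plan follows the abstract: a symmetry reduction, then a spectral estimate on the invariant sector. The first step is the reflection lemma of Choe and Soret. The group $\cG[k,\ell,m]\cong D_{km}\times D_{\ell m}$ is generated by reflections of $\bS^{3}$ through great spheres. Each such sphere meets $\Sigma$ orthogonally and divides it into congruent pieces. Following Choe--Soret and its use in \cite{KM24}, suppose $u$ is a first eigenfunction with $\lambda_1(\Sigma)<2$. Split $u$ into even and odd parts under each generating reflection. An odd part vanishes on the nodal line given by the fixed set of the reflection. If that odd part is nonzero, the fundamental-domain/nodal-domain argument (Courant together with the fact that $\Sigma\setminus(\text{fixed set})$ has exactly two components) forces $\lambda_{1}\ge 2$, because the coordinate functions $x_i$ odd under that reflection are eigenfunctions with eigenvalue $2$ and have the minimal number of nodal domains. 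Iterating over the generators reduces the problem to $\cG$-invariant $u$. It therefore suffices to show that the lowest nonzero eigenvalue $\mu_1^{\cG}$ of the invariant sector exceeds $2$. The asymptotic $4+O(m^{-1})$ is proved as a two-sided estimate.

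The second step is the upper bound $\mu_1^{\cG}\le 4+O(m^{-1})$, proved with an explicit test function. On the $i$-th copy $\bT_i$ of the Clifford torus take $u=c_i$, constant. On the tunnels joining $\bT_i$ and $\bT_{i+1}$ take the harmonic interpolation $c_i + (c_{i+1}-c_i)\,s/L$ in the conformal (log) coordinate $s$ of the catenoidal neck. Then
\[
\int|\nabla u|^2 \approx \sum_i k\ell m^2\,\frac{2\pi}{\log(1/\tau_i)}(c_{i+1}-c_i)^2 ,
\]
where $\tau_i$ is the waist radius. We add a correction normal to the copies: the copies are graphs at heights $\approx$ a linear function of $i$ times the Jacobi-field profile. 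Using the explicit waist ratios and balancing conditions from Section \ref{sec:surfaces}, the conductances $k\ell m^2\cdot 2\pi/\log(1/\tau_i)$ converge, after normalisation by $|\bT|$, to weights $w_i$ proportional to the Perron vector of the adjacency operator of $\Line(P_N)=P_{N-1}$. For this weighted path, the identity in the abstract shows that the lowest nonzero eigenvalue of the weighted graph Laplacian, plus the contribution $2$ (the Jacobi coefficient $|A|^2+\Ric(\nu,\nu)=2+2$ minus the base value $2$ on $\bT$), gives exactly $4$. I would verify this identity directly: with $w_i=\sin(\pi i/N)$, the Rayleigh quotient of the path Laplacian equals $2(1-\cos(\pi/N))$, and this cancels the total conductance term coming from the balancing condition.

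The third step is the lower bound, which I expect to be the main obstacle. Take an invariant eigenfunction $u$ with $\mu<4-\delta$ and decompose $\Sigma$ into blocks, namely the perforated tori, and channels, namely the tunnels. On each block, the Poincaré inequality on the periodically perforated torus, with holes of radius $\tau\ll m^{-1}$ on a lattice of spacing $m^{-1}$, shows that $u$ is $L^2$-close to its block average $c_i$. The exception is a component in the first $\cG$-invariant Jacobi/Laplace eigenspace of $\bT$, and $\cG$-invariance removes the low modes of $\bT$ other than constants. The conformally invariant channel inequality on each cylinder then bounds the tunnel energy from below by the conductance times $(c_{i+1}-c_i)^2$, up to errors controlled by trace inequalities on the block boundaries. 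These two facts reduce the Rayleigh quotient to the weighted graph quotient plus the Clifford contribution, up to $O(m^{-1})$ errors. The limiting identity then gives $\mu\ge 4-O(m^{-1})$. The delicate part is that the errors must be uniform in $m$ even though the number of tunnels grows like $m^2$. I would handle this by working on a single fundamental domain of $\cG$, which contains $O(N)$ tunnel pieces, so that all constants depend only on $N,k,\ell$. Metric closeness of the final surface to the initial surface (Corollary \ref{cor:wiygulW}) transfers all estimates to $\Sigma$. Since $4>2$, $\lambda_1(\Sigma)=2$ follows for $m\ge m_0$.
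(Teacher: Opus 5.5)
Your overall architecture matches the paper's. The reflection lemma puts a first eigenfunction with $\lambda_1<2$ into the $\cG$-invariant sector; your route via Ros's two-piece property is the original Choe--Soret argument and is acceptable, while the paper instead uses $(2-\lambda_1)\int_\Sigma\psi\,\xi=0$ for a linear $\xi$ vanishing on the mirror. The invariant sector is then reduced to a weighted path by a Poincar\'e inequality on the perforated tori, the conformal channel inequality and trace control.

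The gap is in the step that decides Theorem A: evaluating the weighted path, i.e.\ showing that the invariant-sector value is $4$, in particular $>2$, for every $N$. Your identification of the weights is wrong.
\begin{itemize}
\item You take the edge weights proportional to the Perron vector $\sin(i\pi/N)$ of $A(P_{N-1})$. That vector is the vector of waist \emph{ratios} $\tau_i/\tau_1$, and waists enter conductances only logarithmically. One has $a_i=\arcosh(R/\tau_i)=\frac{k\ell m^2}{8\pi}(2-b_2)+O(1)$, with layer dependence $\ln b_i=O(1)$. So $C_i=M\pi/a_i=\frac{8\pi^2}{2-b_2[N]}(1+O(m^{-2}))$ is \emph{the same for all layers}.
\item Your per-tunnel value $2\pi/\log(1/\tau_i)$ is twice too large. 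A tunnel is two half-catenoids of total conformal length $2a_i$, with conductance $\pi/a_i$; your value would give $8$.
\item The Perron property serves only to identify $b_2[N]=\lambda_{\max}(A(P_{N-1}))$. Then $D^{\mathsf T}D=2I-A(P_{N-1})$ gives $2-b_2[N]=\lambda_1(L(P_N))$, attained at the vertex vector $\cos\frac{(2r-1)\pi}{2N}$, not at $\sin(i\pi/N)$.
\item With vertex weights $|\bT|=2\pi^2$ the quotient is $\frac{8\pi^2}{2-b_2}\cdot\frac{2-b_2}{2\pi^2}=4$, and nothing is added. The reduction bounds $\int|\nabla u|^2/\int u^2$ above and below by the graph quotient alone. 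No term produces a ``Clifford contribution'' $|A|^2+\Ric(\nu,\nu)-2$; the Jacobi coefficient enters only through the force balance fixing the neck lengths ($8\pi^2=4|\bT|$ in $a_i$).
\item A ``normal correction'' to a test function of the intrinsic Laplacian is meaningless. Geometry enters only through the metric, already controlled by (W).
\end{itemize}
As written, your lower bound relies on a nonexistent term. Your graph quotient is also neither computed in absolute terms nor shown to be independent of $N$, so $\mu_1^{\cG}>2$ is not established.

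Two further repairs are needed.
\begin{itemize}
\item The blocks must be cut at the gluing radius $R=1/(10\ell m)$, whose ratio to the lattice spacing is fixed, not at the waist scale $\tau$ as you write. The constant comparing a circle average with the cell average grows like $\log(\text{spacing}/\text{radius})$, which is of order $m^2$ at radius $\tau$. Even with transitivity the end averages would then differ from the block means by $O(\cE^{1/2})$, which swamps the error terms. Cut at $R$, the whole resistance of a neck sits in the channel, where the channel inequality is sharp.
\item The tunnels carry a fixed fraction of the area: $M\pi R^2=k\pi/(100\ell)$ on each side of each layer. So $\int u^2$ on them must be controlled, which your plan omits. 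The paper uses the exponential decay $\tau_i\cosh t\le 2Re^{-(a_i-|t|)}$ of the conformal factor (the mass-on-a-channel lemma). It shows the tunnel halves restore exactly the removed discs, giving vertex weights $\widetilde A_j=A_j+\Theta_j=2\pi^2(1+O(m^{-2}))$.
\end{itemize}
Your fundamental-domain device for uniformity in the $k\ell m^2$ tunnels is fine, since $\cG$ acts transitively on the tunnels of each layer. It replaces the paper's aggregate trace inequality.
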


The range $N\ge3$ is the substance of Theorem A. For $N=2$ the surfaces are doublings; the Clifford
torus doublings of \cite{KM24} are treated there by the even--odd method, and we make no claim of
priority in the case $N=2$.

The threshold $m_{0}$ is neither effective, since it must exceed the non-explicit threshold of \cite[Theorem 6.50]{Wiygul}, nor uniform in $N$. The conformal half-length $a_{i}$ of the tunnels satisfies $a_{i}=\frac{k\ell}{4\pi}\bigl(1-\cos\frac{\pi}{N}\bigr)m^{2}+O_{N,k,\ell}(1)$ as $m\to\infty$ (Proposition \ref{prop:conductance} and Remark \ref{rem:regime}), and the coefficient of $m^{2}$ is asymptotic to $k\ell\pi/(8N^{2})$ as $N\to\infty$; this formally suggests the joint long-neck scale $\sqrt{k\ell}\,m/N\to\infty$, but no two-parameter estimate uniform in $N$ and $m$ is claimed. The limiting graph value $4$ is independent of $N$, and this is exactly what excludes the collapse of the spectral gap described in Section \ref{ss:chains}.

The linearised balancing conditions of the construction determine the numbers $b_{i}$ defined in \cite[(3.14)]{Wiygul}, which are the waist-radius ratios for the unperturbed parameters $\zeta=0$, together with the constant $b_{2}$ which enters the exponent of the waist radii; as $m\to\infty$ they converge to limiting values $b_{i}[N]$ determined by a tridiagonal system (Section \ref{sec:surfaces}). Write $L(P_{N})$ and $A(P_{N})$ for the combinatorial Laplacian and the adjacency operator of the path graph on $N$ vertices, $D$ for an oriented incidence matrix of $P_{N}$, and $\lambda_{1}(L(P_{N}))$ for the spectral gap.

\begin{theoremB}
Let $N\ge2$.
\begin{enumerate}
\item[(i)] The limiting waist ratios are
\[
b_{i}[N]=\frac{\sin(i\pi/N)}{\sin(\pi/N)},\qquad 1\le i\le N-1,
\]
so that the balancing constant is $b_{2}[N]=2\cos(\pi/N)$, the largest eigenvalue of $A(P_{N-1})$, and $\lambda_{1}(L(P_{N}))=2-b_{2}[N]$.
\item[(ii)] For every stacked Clifford torus of type $(N,k,\ell,m)$ the total conductance of the
tunnels joining two adjacent tori is
\[
C_{i}=\frac{8\pi^{2}}{2-b_{2}[N]}\bigl(1+O(m^{-2})\bigr)=:C_{N}\bigl(1+O(m^{-2})\bigr),
\qquad \frac{C_{N}}{2\pi^{2}}\,\lambda_{1}\bigl(L(P_{N})\bigr)=4 ,
\]
and the lowest Rayleigh quotient of the weighted path graph model of the invariant sector
(Definition \ref{def:graph}) equals $4+O(m^{-2})$; the constants in the error terms depend on
$N$, $k$ and $\ell$.
\item[(iii)] As $N\to\infty$, $\lambda_{1}(L(P_{N}))=\pi^{2}N^{-2}+O(N^{-4})$ and
$C_{N}=8N^{2}+O(1)$.
\end{enumerate}
\end{theoremB}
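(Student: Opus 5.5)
The plan is to treat the three parts in order, since (ii) uses (i) and (iii) uses both.

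\emph{Part (i).} I will work with the limiting tridiagonal system for the $b_i[N]$ recorded in Section~\ref{sec:surfaces}. The linearised balancing conditions of \cite[(3.14)]{Wiygul}, with $m\to\infty$ and $\zeta=0$, should take the form
\[
b_{i-1}+b_{i+1}=b_{2}\,b_{i},\qquad 1\le i\le N-1,
\]
normalised by $b_{0}=b_{N}=0$ and $b_{1}=1$. This reading of the balancing condition at layer $i$ is the step I expect to be the main obstacle. One has to match Wiygul's conventions for the forces exerted by the catenoidal tunnels above and below each torus, namely the logarithmic terms in the waist radii, to this symmetric form. One must also check that the boundary conditions at the outermost tori give exactly the vanishing of the missing neighbours.

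Once the system is in this form, it is the eigenvalue equation $A(P_{N-1})b=b_{2}b$. The second-order linear recursion with $b_0=0$ and $b_1=1$ forces $b_i=\sin(i\theta)/\sin\theta$ with $b_2=2\cos\theta$. The condition $b_N=0$ then gives $\theta=j\pi/N$. Positivity of all the waist ratios (they are ratios of radii) selects $j=1$, which is the Perron vector. Finally, the spectrum of $L(P_N)$ is $\{2-2\cos(j\pi/N)\}_{j=0}^{N-1}$, so $\lambda_1(L(P_N))=2-b_2[N]$.

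\emph{Part (ii).} I will invoke Proposition~\ref{prop:conductance}. A tunnel of conformal half-length $a_i$ is a cylinder $[-a_i,a_i]\times\bS^1$, whose capacity is $2\pi/(2a_i)$. Summing over the $k\ell m^2$ tunnels gives
\[
C_i=\frac{k\ell m^2\pi}{a_i}.
\]
Inserting $a_i=\frac{k\ell}{4\pi}(1-\cos\frac{\pi}N)m^2\bigl(1+O(m^{-2})\bigr)$ yields
\[
C_i=\frac{4\pi^2}{1-\cos(\pi/N)}\bigl(1+O(m^{-2})\bigr)=\frac{8\pi^2}{2-b_2[N]}\bigl(1+O(m^{-2})\bigr).
\]
The identity $\frac{C_N}{2\pi^2}(2-b_2)=4$ is then immediate.

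For the graph model of Definition~\ref{def:graph}, each vertex carries mass equal to the area $2\pi^2$ of $\bT$, and the edges carry weights $C_i$. Since all weights equal $C_N$ up to a relative error $O(m^{-2})$, the Rayleigh quotient is
\[
\frac{C_N}{2\pi^2}\cdot\frac{\langle L(P_N)u,u\rangle}{|u|^2}\,\bigl(1+O(m^{-2})\bigr).
\]
Its minimum over $u\perp\mathbf 1$ is therefore $4+O(m^{-2})$, by the min--max characterisation and a multiplicative comparison of quadratic forms.

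\emph{Part (iii).} This is a Taylor expansion. We have $2-2\cos(\pi/N)=\pi^2N^{-2}-\pi^4N^{-4}/12+\cdots$. Hence
\[
C_N=\frac{8\pi^2}{\pi^2N^{-2}\bigl(1-\pi^2N^{-2}/12+O(N^{-4})\bigr)}=8N^2+\frac{2\pi^2}{3}+O(N^{-2})=8N^2+O(1).
\]
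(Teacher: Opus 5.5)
Your proposal is correct and has the same outline as the paper's proof of Theorem B: part (ii) rests on Proposition \ref{prop:conductance} plus an evaluation of the graph model, and part (iii) is the same Taylor expansion. Part (i) and the graph evaluation, however, follow different routes.

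\textbf{Part (i).} You solve the three-term recursion explicitly, $b_i=\sin(i\theta)/\sin\theta$, impose $b_N=0$, select $j=1$ by sign, and read off $\lambda_1(L(P_N))$ from the explicit spectrum of $L(P_N)$. The paper instead proceeds structurally:
\begin{itemize}
\item it shows that the symmetrically extended limiting system is the eigen-equation $A(P_{N-1})b=b_2b$ (Lemma \ref{lem:perron});
\item it uses Perron--Frobenius to force $b_2=\lambda_{\max}(A(P_{N-1}))$ for a positive eigenvector;
\item it obtains $\lambda_1(L(P_N))=2-\lambda_{\max}(A(P_{N-1}))$ from the incidence identity $L=DD^{\mathsf T}$, $D^{\mathsf T}D=2I-A(P_{N-1})$.
\end{itemize}
Your route is more elementary and gives the closed forms immediately. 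The paper's route shows that the identity $\frac{C_N}{2\pi^2}\lambda_1(L(P_N))=4$ needs no closed form, and that version is what carries over to the tree analysis of Section \ref{sec:remarks}.

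\textbf{Graph model.} A Courant--Fischer comparison is a tidy substitute for the explicit shift computation of Lemma \ref{lem:mu}. You must apply it to the second eigenvalue of the pencil, i.e.\ a minimum over two-dimensional subspaces. Two things differ from how you wrote it:
\begin{itemize}
\item the vertex weights are $\widetilde A_j=2\pi^2(1+O(m^{-2}))$ by Lemma \ref{lem:mass}, not exactly $2\pi^2$;
\item the constraint is $\sum_j\widetilde A_jc_j=0$, not $u\perp\mathbf 1$.
\end{itemize}
The min--max formulation handles both and gives $4\frac{1-\eta}{1+\eta}\le\mu\le4\frac{1+\eta}{1-\eta}$.

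\textbf{Smaller points.}
\begin{itemize}
\item The step you call the main obstacle is routine here. The limiting system is explicit in \eqref{eq:system}. Checking that its extension $b_i:=b_{N-i}$, $b_0=b_N:=0$, satisfies your full-length recursion is exactly Lemma \ref{lem:perron}: the middle row is the second line of \eqref{eq:system} after substituting $b_{n+1}=b_{N-n-1}$.
\item Your sine ansatz needs $|b_2|<2$. This holds by (B1), or because $|b_2|\ge2$ would give $|b_N|\ge N$.
\item For the limiting ratios, nonnegativity (as limits of positive ratios) already suffices to exclude $j\ge2$, since those eigenvectors have a strictly negative entry.
\item The $O(m^{-2})$ you insert into $a_i$ is where the quantitative work lies. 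Formula \eqref{eq:aiexp} contains the finite-$m$ constant $b_2[N,k,\ell,m]$ multiplied by $m^2$. You therefore need the rate $|b_2[N,k,\ell,m]-b_2[N]|=O(m^{-2})$ of Corollary \ref{cor:rate}, which comes from the implicit function theorem and the nondegeneracy in (B1); the mere convergence in (B2) is not enough. Citing Proposition \ref{prop:conductance}, as you do, covers this.
\end{itemize}
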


The cancellation in (ii) is exact and uses no closed form: it rests on the incidence identity
$L(P_{N})=DD^{\mathsf T}$, $D^{\mathsf T}D=2I-A(P_{N-1})$, which identifies the nonzero spectrum of
$L(P_{N})$ with $2-\operatorname{spec}A(P_{N-1})$, and on the observation that the vector of limiting
waist ratios is a \emph{positive} eigenvector of $A(P_{N-1})$ with eigenvalue $b_{2}[N]$, hence the
Perron vector. The constant $4$ that survives is the coefficient of the Jacobi operator
\begin{equation}\label{eq:jacobi}
\cJ_{\bT}=\Delta_{\bT}+|A|^{2}+\Ric(\nu,\nu)=\Delta_{\bT}+4
\end{equation}
of the Clifford torus, which enters the construction through the linearisation producing the
balancing conditions. In Section \ref{sec:remarks} we prove that among trees the identity
$\lambda_{1}(L(T))=2-\lambda_{\max}(A(\Line(T)))$ characterises paths (Proposition
\ref{prop:trees}); the mechanism is therefore intrinsically one-dimensional.

\subsection{An abstract reduction}\label{ss:abstractintro}
The proof of Theorem A uses only a short list of properties of the surfaces $\Sigma$, and it is
convenient to isolate them. Suppose a closed surface is cut into \emph{blocks} indexed by the
vertices of a finite connected graph $\mathsf G=(V,E)$ and \emph{channels} indexed by its edges, each
edge $e$ carrying $M_{e}$ channels conformally modelled on a flat cylinder $[-a_{e},a_{e}]\times\bS^{1}$
whose conformal factor decays exponentially away from the ends, where it is of size $r_{e}$; suppose a
finite group $\cG$ of isometries preserves each block, permutes the channels of each edge and acts on
their coordinates by $(t,\theta)\mapsto(t,\pm\theta+\theta_{0})$; and suppose a Poincar\'e inequality
with constant $\mathsf P$ and an aggregate trace inequality with constant $\mathsf K$ hold on each block
for $\cG$-invariant functions. This is Definition \ref{def:decomp}. Let $A_{v}$ be the area of the
$v$-th block, $\Theta_{v}:=\sum_{e\ni v}M_{e}\pi r_{e}^{2}$ the area which the channels have removed
from it, $\widetilde A_{v}:=A_{v}+\Theta_{v}$, and $C_{e}:=M_{e}\pi/a_{e}$ the total conductance of the
$e$-th family; let $\mu(\widetilde A,C)$ be the lowest Rayleigh quotient of the graph $\mathsf G$ with
these vertex and edge weights (Definition \ref{def:graph}), and set
\[
\Xi:=\varepsilon+\eta+\mathsf P+\sum_{e\in E}M_{e}^{1/2}r_{e}^{2}+\sum_{e\in E}M_{e}^{-1/2},
\]
where $\varepsilon$ measures the conformal distortion of the channels and $\eta$ the deviation of the
area of a half-channel from $\pi r_{e}^{2}$.

\begin{theoremC}
There are constants $C^{\ast}>0$ and $\Xi_{0}>0$, depending only on $|V|$, $|E|$, on two-sided
bounds for the $\widetilde A_{v}$, on an upper bound for the $C_{e}$ and on $\mathsf K$, such that
if $\Xi\le\Xi_{0}$ then
\[
\Bigl|\ \min\Bigl\{\cR(u):u\in H^{1}(\Sigma)^{\cG}\setminus\{0\},\ \textstyle\int_{\Sigma}u=0\Bigr\}
-\mu(\widetilde A,C)\ \Bigr|\ \le\ C^{\ast}\,\Xi .
\]
If in addition $\Sigma\subset\bS^{3}$ is a closed embedded minimal surface, $\cG<O(4)$ is generated by
reflections in great spheres, and $\mu(\widetilde A,C)>2+C^{\ast}\Xi$, then $\lambda_{1}(\Sigma)=2$.
\end{theoremC}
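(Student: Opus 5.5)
The estimate splits into an upper bound (trial functions built from the graph eigenvector) and a lower bound (projection of an arbitrary invariant function onto block averages); the eigenvalue statement then follows from the Choe--Soret reflection lemma.

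\emph{Upper bound.} Let $\phi:V\to\bR$ attain $\mu(\widetilde A,C)$, with $\sum_v\widetilde A_v\phi_v=0$. Build $u$ as follows.
\begin{itemize}
\item On the block of $v$, set $u=\phi_v$.
\item On each channel of the edge $e=\{v,w\}$, let $u$ interpolate linearly in the conformal coordinate $t$, from $\phi_v$ at $t=-a_e$ to $\phi_w$ at $t=a_e$.
\end{itemize}
This $u$ is $\cG$-invariant, since $\cG$ acts on channel coordinates by $(t,\theta)\mapsto(t,\pm\theta+\theta_0)$ and $u$ depends only on $t$.

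The Dirichlet energy is conformally invariant. On one channel it equals $2\pi(\phi_v-\phi_w)^2/(2a_e)$ up to a factor $1+O(\varepsilon)$. Summing over the $M_e$ channels gives $C_e(\phi_v-\phi_w)^2$.

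For the $L^2$ mass, the exponential decay of the conformal factor concentrates each channel's area near its ends, with half-channel area $\pi r_e^2(1+O(\eta))$. There $u\approx\phi_v$ or $\phi_w$, so the channel mass is $\sum_v\Theta_v\phi_v^2$ up to errors $O(\eta+\varepsilon)$. This is exactly why $\widetilde A_v$ appears in place of $A_v$.

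Correcting the mean to zero costs $O(\Xi)$. Hence $\min\cR\le\mu+C\Xi$.

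\emph{Lower bound.} Take an invariant minimiser $u$ with $\int u=0$ and $\|u\|_2=1$, and let $\bar u_v$ be its average over block $v$. The Poincar\'e inequality with constant $\mathsf P$ bounds $\|u-\bar u_v\|_{L^2(\text{block})}^2$ by $\mathsf P\cdot\mathcal D(u)$, so $\sum A_v\bar u_v^2=1-O(\mathsf P)$ once $\mathcal D(u)$ is a priori bounded by the upper bound.

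The key step is a conformally invariant channel inequality on $[-a,a]\times\bS^1$:
\[
\int|\nabla w|^2\ \ge\ \frac{\pi}{a}\bigl(\langle w\rangle(a)-\langle w\rangle(-a)\bigr)^2 ,
\]
where $\langle w\rangle(t)$ is the circle average at height $t$. It follows from Cauchy--Schwarz applied to $\partial_t\langle w\rangle$.

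Next relate the end-circle averages to the block averages $\bar u_v$. Use the aggregate trace inequality with constant $\mathsf K$, summed over the $M_e$ channels. Single circles carry no control, but $\cG$-invariance makes all channels of an edge carry the same function. Averaging over them, the error is $O(M_e^{-1/2})$, from a Cauchy--Schwarz over channels in the aggregate trace bound, together with $O(M_e^{1/2}r_e^2)$ from the thin-neck capacity. I expect this to be the main obstacle: making the trace-to-average comparison quantitative with exactly the terms in $\Xi$, and absorbing cross terms via $ab\le\delta a^2+b^2/\delta$ uniformly in the weight bounds.

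The channel mass is handled the same way: using the decay of the conformal factor, it is $\Theta_v\bar u_v^2$ plus errors. Summing gives
\[
\mathcal D(u)\ \ge\ \sum_e C_e(\bar u_v-\bar u_w)^2-C\Xi ,
\qquad
1=\sum_v\widetilde A_v\bar u_v^2+O(\Xi) .
\]
Since $\int u=0$, the weighted mean of $(\bar u_v)$ is $O(\Xi)$, and the graph Rayleigh quotient gives $\mathcal D(u)\ge\mu-C^\ast\Xi$.

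\emph{Eigenvalue statement.} By the Choe--Soret reflection lemma, for a minimal surface invariant under a group generated by reflections in great spheres, a first eigenfunction can be found in the $\cG$-invariant sector unless $\lambda_1=2$. More precisely, a first eigenfunction that is not $\cG$-invariant forces $\lambda_1\ge2$ through the reflection/nodal-domain argument, and $\lambda_1\le2$ always holds by Takahashi's theorem \eqref{eq:takahashi}.

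So if $\lambda_1<2$, some invariant eigenfunction has eigenvalue $<2$. This contradicts the lower bound together with $\mu>2+C^\ast\Xi$. Therefore $\lambda_1=2$.
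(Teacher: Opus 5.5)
Your outline follows the paper's proof of Theorem \ref{thm:abstract} essentially step for step. For the upper bound it uses the same trial function, linear in $t$ on the channels. For the lower bound it combines the channel inequality with the aggregate trace bound (D6), uses (D5) together with a decay-based mass estimate on the channels, and corrects the mean. The eigenvalue statement then follows from Takahashi and the reflection lemma. Two statements along the way are wrong as written and should be fixed.

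\emph{Channels of one edge.} $\cG$-invariance does not make all channels of an edge carry the same function. (D3) only says that $\cG$ permutes the channels of each edge, and Remark \ref{rem:decomp} stresses that transitivity is not assumed. It is also not needed. Write $\delta_s=d_e+\delta_+^{(s)}-\delta_-^{(s)}$ and use $\sum_s(\delta_\pm^{(s)})^2\le\mathsf K\cE_{\mathrm{blk}}$ from (D6). Cauchy--Schwarz over $s$ then gives
\[
\sum_s\delta_s^2\ \ge\ M_ed_e^2-M_e^{1/2}\bigl(d_e^2+4\mathsf K\cE_{\mathrm{blk}}\bigr).
\]
Multiplying by $\pi/a_e=C_e/M_e$ produces exactly the $\mathfrak c=\sum_eM_e^{-1/2}$ error. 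The $M_e^{1/2}r_e^2$ terms do not arise in this energy step, and there is no capacity input there. They enter only through the mass and mean estimates, where the end-circle averages $c_{e^\pm}+\delta_\pm^{(s)}$ are compared with $c_{e^\pm}$.

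\emph{Block mass.} The claim $\sum_vA_v\bar u_v^2=1-O(\mathsf P)$ is false. The channel halves carry mass $\approx\Theta_v\bar u_v^2$, which is not $O(\Xi)$. The correct statement, and all that is needed, is $1\le\sum_v\widetilde A_v\bar u_v^2+C\Xi$, which you do state later.

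\emph{Where the decay hypothesis enters.} The channel-mass step for an arbitrary invariant $u$, which you only gesture at, is where (D2) is genuinely used. In the paper this is Lemma \ref{lem:tunnelmass}. It consists of a one-dimensional Cauchy--Schwarz estimate from the end circle against the weight $\rho^2\le4r^2e^{-2(a-|t|)}$, together with a unit-collar bound on the oscillation of $u$ on the end circle.
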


Theorem A is the case where $\mathsf G$ is the path $P_{N}$, the blocks are the tori, the channels are
the tunnels, and $\Xi=O(m^{-1})$; the verification is Proposition \ref{prop:graph}. We stress the
limits of this statement. Theorem C is a reduction, not an existence result: it converts the
spectral problem into the computation of $\mu(\widetilde A,C)$, and it says nothing about which
weights occur. Both inputs which make the computation succeed here, namely the exponential decay of
the conformal factor along a catenoidal neck and the balancing conditions which fix the $r_{e}$, must
be supplied by the construction at hand, and verifying the hypotheses for another family requires
work of the kind carried out in Sections \ref{sec:surfaces} and \ref{sec:sector}. Doublings and
stackings of the equatorial two-sphere, and stackings of the equatorial disc in the free boundary
setting, seem natural candidates; we make no claim about them here.

The reduction of chambers joined by thin necks to a weighted discrete graph is classical \cite{JimboMorita,Arrieta,Anne}. What is specific here is that the reduction is carried out in a
symmetry sector, in which the blocks are spectrally rigid and collapse to single vertices; we give a self-contained quantitative version with an explicit defect and hypotheses checkable on gluing constructions.

\subsection{Method}
By Takahashi's theorem $\lambda_{1}(\Sigma)\le2$. If $\lambda_{1}(\Sigma)<2$, the reflection lemma
of Choe and Soret \cite{ChoeSoret}, in a formulation which uses only minimality, Takahashi's
theorem and Courant's nodal domain theorem (Lemma \ref{lem:CS}), shows that every first
eigenfunction is invariant under the group $\cG$, which is generated by reflections in great
spheres. It therefore suffices to bound the
Rayleigh quotient from below on the $\cG$-invariant sector. On each torus the $\cG$-invariant
functions have Fourier support in a lattice of frequencies of order $m$, so a $\cG$-invariant
function is, up to an error controlled by its Dirichlet energy, constant on each torus; the
Dirichlet energy of such a function is bounded from below by the energy of the interpolating mode in
the tunnels, and the latter is controlled by a sharp channel inequality. The resulting lower bound
is the lowest Rayleigh quotient $\mu$ of a weighted path graph whose weights are the areas of the
tori and the conductances of the tunnel families, and Theorem B evaluates $\mu=4+O(m^{-2})$. Two
elementary inequalities carry the analysis: the channel inequality on a cylinder (Lemma
\ref{lem:channel}), which is conformally invariant and imposes no condition on the boundary data,
so that it applies to the restriction of an arbitrary function to a tunnel; and a Poincar\'e
inequality on the periodically perforated torus (Lemma \ref{lem:poincare}), available with a
constant independent of $m$ because the ratio of the gluing radius to the lattice spacing is fixed
in \cite{Wiygul}. We use neither capacity theory nor spectral convergence on graph-like spaces, and
we work on the minimal surface itself.

\subsection{What is used from the construction}
Numbered references to \cite{Wiygul} follow the published version. From that paper we use: the definition of the initial surfaces \cite[(2.1)--(2.9), (2.26)--(2.30)]{Wiygul}; the
definition of the waist radii and the balancing conditions \cite[(2.12)--(2.18), (3.11)--(3.17)]{Wiygul}
together with the existence statement \cite[Lemma 3.18]{Wiygul} and the bounds
\cite[(3.6)]{Wiygul} on the heights; the parametrisation \cite[(4.22)--(4.25)]{Wiygul} of the catenoidal regions; the statement \cite[(4.4)]{Wiygul} that $\cG$ acts on normal perturbations
without sign; the conformal factor $\rho$ of \cite[(4.9)--(4.15), (4.27)]{Wiygul}, the weighted H\"older norms of \cite[(4.6)]{Wiygul} and the decay norms of the paragraph ``Decay norms and a global estimate of the mean curvature'' of \cite[\S4]{Wiygul}, whose exponents $\alpha,\gamma\in(0,1)$
are fixed once and for all in the hypotheses of \cite[Theorem 6.50]{Wiygul}; and the main theorem \cite[Theorem 1.1, Lemma 4.42, Theorem 6.50, (6.61)]{Wiygul}, which produces the minimal surface as the normal graph over the
initial surface of a function $u$ satisfying $\|u\|_{2,\alpha,\gamma}\le2C_{1}\tau_{1}$. In Section
\ref{ss:minimal} we show that this estimate implies property (W) of Definition \ref{def:stacked},
the only property of the final surface used afterwards; the derivation uses of the weight only the
positivity of its exponent $\gamma$, and of $\rho$ only the two-sided bound $m\le\rho\le C\tau_{1}^{-1}$.
All estimates on the initial surface which we need are derived in Section \ref{sec:surfaces} from
the explicit formulas.

\subsection{Organisation}
Section \ref{sec:prelim} collects Takahashi's theorem and the reflection lemma. Section \ref{sec:surfaces} records the data of the construction, defines the class of
surfaces and shows that the surfaces of \cite{Wiygul} belong to it. Section \ref{sec:balancing}
proves Theorem B(i). Section \ref{sec:channel} proves the channel inequality and evaluates the
conductances. Section \ref{sec:sector} establishes the Poincar\'e, trace and mass estimates, proves
the abstract reduction theorem (Theorem C, restated as Theorem \ref{thm:abstract}) and applies it to
the invariant sector. Section \ref{sec:proof} proves Theorems A
and B. Section \ref{sec:remarks} discusses the structure of the identity, analyses the balancing
operator of a tree, characterises paths among trees, and states two problems.

\begin{convention}\label{conv:notation}
All surfaces are smooth, closed, embedded and connected. The Laplacian is
$\Delta=\operatorname{div}\nabla$, so that $\Delta\varphi+\lambda\varphi=0$ for an eigenfunction with
eigenvalue $\lambda\ge0$. We write $|\Omega|$ for the area of a two-dimensional region, and
\[
\cR(u)=\frac{\int_{\Sigma}|\nabla u|^{2}}{\int_{\Sigma}u^{2}},\qquad
\lambda_{1}(\Sigma)=\min\Bigl\{\cR(u):u\in H^{1}(\Sigma)\setminus\{0\},\
\textstyle\int_{\Sigma}u=0\Bigr\}.
\]
Unless stated otherwise, constants denoted $C$, $C'$, $c$, $K$, \dots\ depend only on $N$, $k$ and
$\ell$ and may change from line to line; $O(m^{-1})$ and $O(m^{-2})$ are understood with such
constants, uniformly for $m\ge m_{0}[N,k,\ell]$, and $m_{0}$ may be enlarged finitely many times.
No uniformity in $N$ is claimed for the constants or for $m_{0}$; see Remark \ref{rem:regime}.
\end{convention}

\section{Preliminaries}\label{sec:prelim}

\begin{lemma}[Takahashi \cite{Takahashi}]\label{lem:takahashi}
Let $\Sigma\subset\bS^{3}$ be a closed minimal surface. For every linear function $\xi$ on $\bR^{4}$
the restriction $\xi|_{\Sigma}$ satisfies $\Delta_{\Sigma}\xi+2\xi=0$ and $\int_{\Sigma}\xi=0$.
Consequently $\lambda_{1}(\Sigma)\le2$.
\end{lemma}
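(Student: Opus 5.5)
The plan is to prove the identity $\Delta_{\Sigma}\xi+2\xi=0$ by a direct computation with the position vector, and then derive the other two claims from it. Write $X:\Sigma\to\bR^{4}$ for the inclusion. A linear function has the form $\xi(x)=\langle x,w\rangle$ for a fixed $w\in\bR^{4}$, so $\xi|_{\Sigma}=\langle X,w\rangle$ and $\Delta_{\Sigma}\xi=\langle\Delta_{\Sigma}X,w\rangle$. The task therefore reduces to the vector identity $\Delta_{\Sigma}X=-2X$.

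To obtain it, I will use the classical fact that for any submanifold of Euclidean space the Laplacian of the position vector equals its mean curvature vector, $\Delta_{\Sigma}X=\vec H_{\bR^{4}}$, where $\vec H$ is the trace of the second fundamental form. I plan to verify this in a local orthonormal frame $e_{1},e_{2}$. The computation is
\[
\Delta_{\Sigma}X=\sum_{j}\bigl(\bar\nabla_{e_{j}}e_{j}-\nabla^{\Sigma}_{e_{j}}e_{j}\bigr)=\sum_{j}\mathrm{II}(e_{j},e_{j}),
\]
since the Euclidean Hessian of $X$ vanishes.

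Next I split the second fundamental form of $\Sigma\subset\bR^{4}$ into two parts: the part tangent to $\bS^{3}$, which is the second fundamental form of $\Sigma$ in $\bS^{3}$, and the part along the unit normal $X$ of the sphere. The sphere's second fundamental form is $\mathrm{II}_{\bS^{3}}(v,v)=-|v|^{2}X$. Minimality of $\Sigma$ in $\bS^{3}$ kills the first part, so
\[
\vec H_{\bR^{4}}=-2X,\qquad\text{hence}\qquad \Delta_{\Sigma}\xi+2\xi=0.
\]
This is the only step requiring any care, and it amounts to tracking the sign convention $\Delta=\operatorname{div}\nabla$.

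The vanishing of the mean follows by integrating over the closed surface and applying the divergence theorem:
\[
2\int_{\Sigma}\xi=-\int_{\Sigma}\Delta_{\Sigma}\xi=0.
\]

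For the bound $\lambda_{1}(\Sigma)\le2$, I need a nonzero admissible test function. Some coordinate function is not identically zero on $\Sigma$, because otherwise $\Sigma$ would lie in $\{0\}$, which is impossible for a surface in $\bS^{3}$. For such a $\xi$, integration by parts gives $\int|\nabla\xi|^{2}=2\int\xi^{2}$. So $\cR(\xi)=2$ with $\int\xi=0$, and the variational characterisation in Convention~\ref{conv:notation} yields $\lambda_{1}(\Sigma)\le2$.
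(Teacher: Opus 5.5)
Your proposal is correct and takes essentially the paper's route: integrate the equation over the closed surface to get $\int_{\Sigma}\xi=0$, then use a nonvanishing linear function as a test function with Rayleigh quotient $2$. The only difference is that the paper quotes $\Delta_{\Sigma}\xi+2\xi=0$ from Takahashi as \eqref{eq:takahashi}, whereas you derive it yourself from $\Delta_{\Sigma}X=\vec H$ and the splitting of the second fundamental form of $\Sigma\subset\bR^{4}$, which is the standard proof of that identity.
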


\begin{proof}
The equation is \eqref{eq:takahashi} with $n=2$; integrating it over the closed surface gives
$\int_{\Sigma}\xi=0$. Since $\xi|_{\Sigma}\not\equiv0$ for a suitable $\xi$, the variational
characterisation gives $\lambda_{1}\le2$.
\end{proof}

The following lemma is the first half of the argument of Choe and Soret \cite{ChoeSoret}. We
include a proof because its hypotheses matter: it uses minimality only through Takahashi's theorem,
together with Courant's nodal domain theorem, and nothing else; in particular it is independent of
any combinatorial hypothesis on a fundamental domain.

\begin{lemma}[Reflection lemma]\label{lem:CS}
Let $\Sigma\subset\bS^{3}$ be a closed embedded minimal surface invariant under a finite group
$\cG<O(4)$ generated by reflections in great spheres. If $\lambda_{1}(\Sigma)<2$, then every first
eigenfunction of $\Sigma$ is $\cG$-invariant.
\end{lemma}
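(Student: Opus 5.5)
It suffices to show that every first eigenfunction $\varphi$ is invariant under each generating reflection $R$ of $\cG$. Indeed, $\cG$ is generated by such reflections, and invariance under generators gives invariance under the whole group. So fix a reflection $R$ through a great sphere $S=\{\xi=0\}$, where $\xi$ is a linear function on $\bR^{4}$, with $R(\Sigma)=\Sigma$. Assume $\lambda_{1}(\Sigma)<2$ and let $\varphi$ be a first eigenfunction. Since $R$ is an isometry of $\Sigma$, the function $\varphi\circ R$ is again a first eigenfunction, and we split
\[
\varphi=\varphi_{+}+\varphi_{-},\qquad \varphi_{\pm}=\tfrac12(\varphi\pm\varphi\circ R).
\]
Each of $\varphi_{\pm}$ is either zero or a first eigenfunction. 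The goal is to show $\varphi_{-}\equiv0$.

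Suppose instead that $\varphi_{-}\not\equiv0$. It is a first eigenfunction, so by Courant's nodal domain theorem it has exactly two nodal domains. It is also odd under $R$, so it vanishes on the fixed set $\Sigma\cap S$. Oddness further implies that $R$ maps $\{\varphi_{-}>0\}$ onto $\{\varphi_{-}<0\}$. The main step is now to show that these two nodal domains coincide with the two halves $\Sigma^{\pm}=\Sigma\cap\{\pm\xi>0\}$. Let $\Omega=\{\varphi_{-}>0\}$. The set $\Sigma\setminus S$ is $R$-symmetric, with $R$ exchanging $\Sigma^{+}$ and $\Sigma^{-}$. Since $\Omega$ and $R\Omega$ are disjoint and are the only two nodal domains, each connected component of $\Sigma^{+}$ on which $\varphi_{-}$ does not vanish identically must lie in a single nodal domain. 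Unique continuation rules out identical vanishing on an open set. Write $\Omega\cap\Sigma^{+}$ and $\Omega\cap\Sigma^{-}$ for the two pieces of $\Omega$. If $\Omega$ met both halves, connectedness of $\Omega$ would force $\Omega$ to cross $S$. But $\varphi_{-}=0$ on $\Sigma\cap S$, and a nodal domain cannot contain points of the zero set. Here one must also note that $\Sigma\cap S$ is a union of curves, since $\Sigma$ is minimal and not contained in $S$; otherwise $\Sigma$ would be the great sphere itself and the statement is trivial. Hence $\Omega$ lies in one half, say $\Sigma^{+}$, and $R\Omega$ lies in $\Sigma^{-}$. Together they exhaust $\Sigma$ up to the nodal set, so $\varphi_{-}$ has the same sign as $\xi$ on all of $\Sigma\setminus S$.

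Now Lemma~\ref{lem:takahashi} gives the contradiction. The function $\xi|_{\Sigma}$ is an eigenfunction with eigenvalue $2$, while $\varphi_{-}$ has eigenvalue $\lambda_{1}<2$. Eigenfunctions of distinct eigenvalues are $L^{2}$-orthogonal, so
\[
\int_{\Sigma}\xi\,\varphi_{-}=0 .
\]
On the other hand, $\xi\varphi_{-}\ge0$ everywhere and is positive on an open set, so this integral is strictly positive. This contradiction forces $\varphi_{-}\equiv0$, that is, $\varphi\circ R=\varphi$ for every generating reflection $R$, which proves that $\varphi$ is $\cG$-invariant.

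I expect the main obstacle to be the topological step in the second paragraph: showing that the two nodal domains of an odd first eigenfunction are exactly the two halves of $\Sigma$ cut by $S$. The danger is that a component of $\Sigma^{+}$ lies entirely in $\Omega$ while another component of $\Sigma^{+}$ lies in $R\Omega$. This is not excluded by symmetry alone, but it is harmless. The contradiction only needs $\xi\varphi_{-}$ to have a fixed sign, so I would replace the claim "nodal domains equal halves" by the weaker statement that $\operatorname{sign}\varphi_{-}$ is constant on $\Sigma^{+}$. To prove this, I would show that if $\varphi_{-}$ took both signs on $\Sigma^{+}$, then reflecting produces at least three nodal domains. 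The positive parts in $\Sigma^{+}$ and in $\Sigma^{-}$ would be separated by the zero set on $S$, and so they could not be connected. Having three nodal domains contradicts Courant's theorem.
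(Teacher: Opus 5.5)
Your proof is correct and is essentially the paper's argument: the paper works with $\psi=u-u\circ\sigma$ (your $2\varphi_{-}$). It uses Courant to make $\{\psi>0\}$ a single connected nodal domain, which therefore lies in one of the two sides of the great sphere, and it contradicts $\int_{\Sigma}\psi\,\xi>0$ via Green's formula and Takahashi's theorem, needing only the sign statement $\psi\,\xi\ge0$ as in your last paragraph. The configuration you worry about at the end is already excluded by the connectedness argument you gave. Your remarks on unique continuation and on $\Sigma\cap S$ being a union of curves are unnecessary. The sentence claiming that components of $\Sigma^{+}$ lie in single nodal domains is not correct as phrased, but it is not needed.
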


\begin{proof}
Assume $\lambda_{1}:=\lambda_{1}(\Sigma)<2$, let $u$ be a real first eigenfunction, let
$\sigma\in\cG$ be the reflection in a great sphere $\Pi$, and set $\psi:=u-u\circ\sigma$. Since
$\sigma$ restricts to an isometry of $\Sigma$, $u\circ\sigma$ is a $\lambda_{1}$-eigenfunction, so
either $\psi\equiv0$ or $\psi$ is a $\lambda_{1}$-eigenfunction; we assume the latter and derive a
contradiction. Note that $\psi\circ\sigma=-\psi$ and that $\psi=0$ on $\Sigma\cap\Pi$, because
$\sigma$ fixes $\Pi$ pointwise.

By Courant's nodal domain theorem \cite[Ch.~VI, \S6]{CourantHilbert} a $\lambda_{1}$-eigenfunction
has at most two nodal domains. Since $\int_{\Sigma}\psi=0$ and $\psi\not\equiv0$, the open sets
$\Omega_{+}:=\{\psi>0\}$ and $\Omega_{-}:=\{\psi<0\}$ are both nonempty; hence each of them is a
single nodal domain, and in particular each of them is connected. Since $\psi\circ\sigma=-\psi$,
the reflection $\sigma$ maps $\Omega_{+}$ onto $\Omega_{-}$.

Let $\xi$ be a linear function on $\bR^{4}$ vanishing exactly on the hyperplane spanned by $\Pi$,
so that $\xi\circ\sigma=-\xi$, and let $D_{\pm}:=\Sigma\cap\{\pm\xi>0\}$. These are disjoint open
subsets of $\Sigma$ with $D_{+}\cup D_{-}=\Sigma\setminus\Pi$, and $\sigma(D_{+})=D_{-}$. Since
$\psi=0$ on $\Sigma\cap\Pi$, the connected set $\Omega_{+}$ is contained in
$\Sigma\setminus\Pi=D_{+}\sqcup D_{-}$, hence in one of the two sets $D_{\pm}$; replacing $u$ by
$-u$, which replaces $\psi$ by $-\psi$ and interchanges $\Omega_{+}$ and $\Omega_{-}$, we may
assume $\Omega_{+}\subset D_{+}$. Then $\Omega_{-}=\sigma(\Omega_{+})\subset\sigma(D_{+})=D_{-}$.
Consequently $\psi\ge0$ on $D_{+}$, because $\{\psi<0\}=\Omega_{-}\subset D_{-}$; likewise
$\psi\le0$ on $D_{-}$; and $\psi=0$ on $\Sigma\cap\Pi$. Since $\xi>0$ on $D_{+}$, $\xi<0$ on
$D_{-}$ and $\xi=0$ on $\Pi$, we conclude that $\psi\,\xi\ge0$ on $\Sigma$ and $\psi\,\xi>0$ on the
nonempty open set $\Omega_{+}$; hence $\int_{\Sigma}\psi\,\xi>0$.

On the other hand, by Lemma \ref{lem:takahashi}, $\Delta\xi+2\xi=0$ on $\Sigma$, while
$\Delta\psi+\lambda_{1}\psi=0$, so that Green's formula on the closed surface $\Sigma$ gives
\[
(2-\lambda_{1})\int_{\Sigma}\psi\,\xi=\int_{\Sigma}\bigl(\xi\,\Delta\psi-\psi\,\Delta\xi\bigr)=0 ,
\]
and $\int_{\Sigma}\psi\,\xi=0$ because $\lambda_{1}\ne2$: a contradiction. Therefore $\psi\equiv0$,
that is $u\circ\sigma=u$ for every reflection $\sigma\in\cG$ in a great sphere, and $u$ is
$\cG$-invariant since $\cG$ is generated by such reflections.
\end{proof}

\begin{remark}\label{rem:CS}
In \cite{ChoeSoret} the nodal domains of $\psi$ are identified with the two components of
$\Sigma\setminus\Pi$ furnished by the two-piece property of Ros \cite{Ros}, and the contradiction
is derived on one of them. The orthogonality argument above makes this identification unnecessary:
it uses neither the two-piece property nor embeddedness, and it applies verbatim to a closed
minimal surface immersed in $\bS^{3}$ on which a reflection of $\bS^{3}$ induces an isometry. We
shall use the lemma only for embedded surfaces, which is the setting of Yau's conjecture.
\end{remark}

\section{The surfaces of Wiygul}\label{sec:surfaces}

In this section all numbered references are to the published version of \cite{Wiygul}.

\subsection{The Clifford torus and flat coordinates}
Realise $\bS^{3}=\{(z_{1},z_{2})\in\bC^{2}:|z_{1}|^{2}+|z_{2}|^{2}=1\}$, let
$\bT=\{|z_{1}|=|z_{2}|=1/\sqrt2\}$ be the Clifford torus, and let $C_{1}=\{z_{2}=0\}$ and
$C_{2}=\{z_{1}=0\}$ be its axes. The map \cite[(2.2)]{Wiygul}
\begin{equation}\label{eq:Phi}
\Phi(\rx,\ry,\rz):=\Bigl(e^{i\sqrt2\rx}\sin\bigl(\rz+\tfrac\pi4\bigr),\
e^{i\sqrt2\ry}\cos\bigl(\rz+\tfrac\pi4\bigr)\Bigr),\qquad
(\rx,\ry,\rz)\in\bR\times\bR\times\bigl(-\tfrac\pi4,\tfrac\pi4\bigr),
\end{equation}
is a covering of $\bS^{3}\setminus(C_{1}\cup C_{2})$, and \cite[(2.3)]{Wiygul}
\begin{equation}\label{eq:Phimetric}
\Phi^{*}g_{\bS^{3}}=d\rx^{2}+d\ry^{2}+d\rz^{2}+\sin(2\rz)\bigl(d\rx^{2}-d\ry^{2}\bigr).
\end{equation}
The horizontal planes $\{\rz=\text{const}\}$ are mapped onto the tori $\bT_{\rz}$ parallel to
$\bT=\bT_{0}$, on which the induced metric is $(1+\sin2\rz)d\rx^{2}+(1-\sin2\rz)d\ry^{2}$ with area
element $\cos(2\rz)\,d\rx\,d\ry$. In particular $\Phi(\cdot,\cdot,0)$ descends to an isometry of the
flat torus
\[
\Tfl:=\bR^{2}/(\sqrt2\pi\bZ)^{2}
\]
onto $\bT$, so that
\begin{equation}\label{eq:areaT}
|\bT|=2\pi^{2},
\end{equation}
and the eigenvalues of the Laplacian of $\bT$ are $2(p^{2}+q^{2})$, $p,q\in\bZ$, with
eigenfunctions $e^{i\sqrt2(p\rx+q\ry)}$. The principal curvatures of $\bT$ are $\pm1$, so
$|A|^{2}=2$, and $\Ric=2g$ on $\bS^{3}$; the Jacobi operator of $\bT$ is $\cJ_{\bT}=\Delta_{\bT}+4$
as in \eqref{eq:jacobi}. A \emph{parallel torus} is one of the tori $\bT_{\rz}$.

\subsection{The symmetry group}
Fix integers $k,\ell,m\ge1$ and set
\begin{equation}\label{eq:XY}
X:=\frac{\pi}{\sqrt2\,km},\qquad Y:=\frac{\pi}{\sqrt2\,\ell m},\qquad R:=\frac{1}{10\,\ell m},
\qquad M:=k\ell m^{2}.
\end{equation}
Following \cite[(2.10)]{Wiygul} we assume $k\le\ell$; this is no loss of generality, since
interchanging the axes $C_{1},C_{2}$ interchanges $k$ and $\ell$. Let
$\underline{\mathsf X}(z_{1},z_{2})=(\overline{z_{1}},z_{2})$ and
$\underline{\mathsf Y}(z_{1},z_{2})=(z_{1},\overline{z_{2}})$, and let
$\mathsf R_{C_{2}}^{\theta}(z_{1},z_{2})=(e^{i\theta}z_{1},z_{2})$,
$\mathsf R_{C_{1}}^{\theta}(z_{1},z_{2})=(z_{1},e^{i\theta}z_{2})$. The symmetry group of the
construction is \cite[(2.26)]{Wiygul}
\begin{equation}\label{eq:G}
\cG=\cG[k,\ell,m]:=\bigl\langle\mathsf R_{C_{2}}^{2\pi/km},\ \mathsf R_{C_{1}}^{2\pi/\ell m},\
\underline{\mathsf X},\ \underline{\mathsf Y}\bigr\rangle\cong D_{km}\times D_{\ell m},
\end{equation}
where $D_{q}$ is the dihedral group of order $2q$. Through $\Phi$ \cite[(2.25)]{Wiygul}, $\cG$ acts
on the coordinates $(\rx,\ry,\rz)$ by the group generated by the translations
$(\rx,\ry)\mapsto(\rx+2X,\ry)$, $(\rx,\ry)\mapsto(\rx,\ry+2Y)$ and the reflections
$\rx\mapsto-\rx$, $\ry\mapsto-\ry$, the coordinate $\rz$ being fixed. In particular every element of
$\cG$ preserves each parallel torus and each of its two sides.

\begin{lemma}\label{lem:reflgen}
$\cG[k,\ell,m]$ is generated by reflections in great spheres of $\bS^{3}$.
\end{lemma}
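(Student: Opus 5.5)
The plan is to write each generator of $\cG$ explicitly as a reflection in a great sphere, or as a product of two such reflections. A great sphere is the intersection of $\bS^{3}$ with a hyperplane $H$ through the origin of $\bR^{4}=\bC^{2}$. The reflection in it is the orthogonal reflection of $\bR^{4}$ across $H$, which is an element of $O(4)$ with determinant $-1$.

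First, the two conjugations. Writing $z_{1}=x_{1}+ix_{2}$, the map $\underline{\mathsf X}$ sends $(x_{1},x_{2},z_{2})$ to $(x_{1},-x_{2},z_{2})$. This is the reflection across the hyperplane $\{x_{2}=0\}$, hence the reflection in the great sphere $\bS^{3}\cap\{\operatorname{Im}z_{1}=0\}$. In the same way, $\underline{\mathsf Y}$ is the reflection in $\bS^{3}\cap\{\operatorname{Im}z_{2}=0\}$.

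Next, the rotations. For any angle $\phi$, the map $z_{1}\mapsto e^{i\phi}\overline{z_{1}}$ (with $z_{2}$ fixed) is the reflection of the $z_{1}$-plane across the real line $e^{i\phi/2}\bR$. It is therefore the reflection of $\bR^{4}$ across the hyperplane $\{\operatorname{Im}(e^{-i\phi/2}z_{1})=0\}$, which is the reflection in a great sphere. Composing this with $\underline{\mathsf X}$ gives
\[
\bigl(z_{1}\mapsto e^{i\phi}\overline{z_{1}}\bigr)\circ\underline{\mathsf X}\,(z_{1},z_{2})=(e^{i\phi}z_{1},z_{2})=\mathsf R_{C_{2}}^{\phi}(z_{1},z_{2}).
\]
Take $\phi=2\pi/km$. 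Then $\mathsf R_{C_{2}}^{2\pi/km}$ is a product of two reflections in great spheres, and the second reflection $\sigma_{1}:(z_{1},z_{2})\mapsto(e^{i\phi}\overline{z_{1}},z_{2})$ equals $\mathsf R_{C_{2}}^{2\pi/km}\underline{\mathsf X}$, so it lies in $\cG$. The same construction in the second coordinate, with $\phi=2\pi/\ell m$, writes $\mathsf R_{C_{1}}^{2\pi/\ell m}$ as the product $\sigma_{2}\,\underline{\mathsf Y}$, where $\sigma_{2}\in\cG$ is a reflection in a great sphere.

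It follows that the subgroup generated by $\underline{\mathsf X},\underline{\mathsf Y},\sigma_{1},\sigma_{2}$ contains all four generators in \eqref{eq:G}, so it equals $\cG$. Since each of these four elements is a reflection in a great sphere, this proves the lemma.

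There is no real obstacle here. The only point to check is that the reflections used to factor the rotations lie in $\cG$ and are not merely elements of $O(4)$. This holds because each of them is the product of a generator of $\cG$ with $\underline{\mathsf X}$ or $\underline{\mathsf Y}$. As a cross-check, the dihedral factor $D_{km}$ acting on the $z_{1}$-plane is the standard dihedral group generated by the two line reflections $\underline{\mathsf X}$ and $\sigma_{1}$, and each of these extends to $\bC^{2}$ as a hyperplane reflection that fixes $z_{2}$.
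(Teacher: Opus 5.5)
Your proof is correct and follows the paper's route: both use the generating reflections $\underline{\mathsf X}$, $\mathsf R_{C_{2}}^{2\pi/km}\underline{\mathsf X}$, $\underline{\mathsf Y}$ and $\mathsf R_{C_{1}}^{2\pi/\ell m}\underline{\mathsf Y}$. The only difference is how $\mathsf R_{C_{2}}^{2\pi/km}\underline{\mathsf X}$ is shown to be a reflection: the paper writes it as the conjugate $\mathsf R_{C_{2}}^{\pi/km}\underline{\mathsf X}\,\mathsf R_{C_{2}}^{-\pi/km}$, whereas you read off its fixed hyperplane $\{\operatorname{Im}(e^{-i\pi/km}z_{1})=0\}$ directly.
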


\begin{proof}
Each of $\underline{\mathsf X}$, $\underline{\mathsf Y}$ fixes pointwise a three-dimensional linear
subspace of $\bR^{4}$ and reverses its orthogonal complement, hence is the reflection in a great
sphere; so is every conjugate $\mathsf R\,\underline{\mathsf X}\,\mathsf R^{-1}$ by a rotation
$\mathsf R$ about $C_{1}$ or $C_{2}$. For $\theta\in\bR$ one computes
$\mathsf R_{C_{2}}^{\theta}\underline{\mathsf X}\mathsf R_{C_{2}}^{-\theta}(z_{1},z_{2})
=(e^{2i\theta}\overline{z_{1}},z_{2})$, that is
$\mathsf R_{C_{2}}^{\theta}\underline{\mathsf X}\mathsf R_{C_{2}}^{-\theta}
=\mathsf R_{C_{2}}^{2\theta}\underline{\mathsf X}$. Hence the reflection
$\mathsf R_{C_{2}}^{\pi/km}\underline{\mathsf X}\mathsf R_{C_{2}}^{-\pi/km}
=\mathsf R_{C_{2}}^{2\pi/km}\underline{\mathsf X}$ belongs to $\cG$, although $\mathsf R_{C_{2}}^{\pi/km}$
itself does not, and together with $\underline{\mathsf X}$ it generates the dihedral factor $D_{km}$,
the product $\underline{\mathsf X}\cdot\mathsf R_{C_{2}}^{2\pi/km}\underline{\mathsf X}$ being
$\mathsf R_{C_{2}}^{-2\pi/km}$. Similarly for $D_{\ell m}$.
\end{proof}

\subsection{Lattices, cells and the gluing radius}
Let $\mathsf L_{0}\subset\Tfl$ be the image of $2X\bZ\times2Y\bZ$ and $\mathsf L_{1}:=\mathsf L_{0}+(X,Y)$
\cite[(2.27)]{Wiygul}; both are $\cG$-invariant rectangular lattices of cell size $2X\times2Y$, each
with $M=k\ell m^{2}$ points, and $\mathsf L_{0}\cup\mathsf L_{1}$ is a lattice with $2M$ points. The
tunnels of the $i$-th layer, joining the $i$-th and $(i+1)$-st tori, sit over
$\mathsf L_{(i+1)\bmod2}$ \cite[\S1, (2.29)]{Wiygul}. Accordingly the set of \emph{removal points}
of the $j$-th torus is
\begin{equation}\label{eq:Zj}
Z_{1}:=\mathsf L_{0},\qquad Z_{N}:=\mathsf L_{N\bmod2},\qquad
Z_{j}:=\mathsf L_{0}\cup\mathsf L_{1}\quad(1<j<N),
\end{equation}
with $M_{j}:=\#Z_{j}$ equal to $M$ for $j\in\{1,N\}$ and to $2M$ otherwise, and the
\emph{perforated tori} are
\begin{equation}\label{eq:Pj}
P_{j}:=\Tfl\setminus\bigcup_{p\in Z_{j}}B(p,R).
\end{equation}
Since $k\le\ell$, any two distinct points of $\mathsf L_{0}\cup\mathsf L_{1}$ are at distance at least
$\min\{2Y,\sqrt{X^{2}+Y^{2}}\}\ge Y$, and
\begin{equation}\label{eq:ratio}
Y=\frac{\pi}{\sqrt2\,\ell m}>\frac{2.2}{\ell m}>4R,\qquad \frac{R}{2Y}=\frac{1}{10\sqrt2\,\pi}<\frac1{44}.
\end{equation}
The ratio of the gluing radius $R$ to the lattice spacing is thus a constant independent of $m$; this
is what makes the constants of Section \ref{sec:sector} independent of $m$.

\subsection{Waist radii and balancing conditions}
Let $n:=\lfloor N/2\rfloor$. For $N=2$ and $N=3$ set $b_{2}:=0$ and $b_{2}:=1$ respectively
\cite[(2.12)]{Wiygul}. For $N\ge4$ the numbers $b_{2},\dots,b_{n}$ are a solution of the tridiagonal
system \cite[(3.17)]{Wiygul}
\begin{equation}\label{eq:system}
\begin{aligned}
-b_{i-1}+b_{2}b_{i}-b_{i+1}&=-\frac{8\pi}{k\ell m^{2}}\,b_{i}\ln b_{i}, && 2\le i\le n-1,\\
-2^{(N+1)\bmod2}\,b_{n-1}+\bigl(b_{2}-(N\bmod2)\bigr)b_{n}&=-\frac{8\pi}{k\ell m^{2}}\,b_{n}\ln b_{n},
\end{aligned}
\end{equation}
with $b_{1}:=1$; in all cases we extend the family by $b_{i}:=b_{N-i}$ for $n<i\le N-1$, so that
$b_{1}=b_{N-1}=1$. We call \emph{limiting system} the system \eqref{eq:system} with the right-hand
sides replaced by $0$. From \cite[Lemma 3.18 and its proof]{Wiygul} we use the following facts.
\begin{enumerate}
\item[(B1)] For $N\ge4$ the limiting system has a solution $d=(d_{2}[N],\dots,d_{n}[N])$ with $d_{2}\in(1,2)$ and, when $n\ge3$, $d_{2}<d_{3}<\dots<d_{n}$. For $N\ge6$ the differential at $d$ of the map $F_{N}:x\mapsto A_{N}(x_{2})x$ of \cite[(3.28)]{Wiygul}, in terms of which the limiting system reads $F_{N}(x)=e_{1}$ \cite[(3.21)]{Wiygul}, is invertible \cite[(3.29)--(3.31)]{Wiygul}.
\item[(B2)] For $m\ge m_{0}[N]$ the numbers $b_{i}=b_{i}[N,k,\ell,m]$ are the solution of
\eqref{eq:system} near $d$ furnished by the inverse function theorem, and $b_{i}\to d_{i}[N]$ as
$m\to\infty$.
\end{enumerate}
The waist radii are defined by \cite[(2.13)--(2.14)]{Wiygul}: with parameters
$\zeta=(\zeta_{1},\dots,\zeta_{N-1})$ bounded by a constant $c[N,k,\ell]$ independent of $m$,
\begin{equation}\label{eq:tau}
\tau_{1}=\frac{e^{\zeta_{1}}}{10\ell m}\exp\Bigl(-\frac{k\ell m^{2}}{4\pi}\Bigl(1-\frac{b_{2}}{2}\Bigr)\Bigr),
\qquad
\tau_{i}=e^{\zeta_{i}/(k\ell m^{2})}\,b_{i}\,\tau_{1}\quad(1<i<N),
\end{equation}
so that \cite[(2.18)]{Wiygul}
\begin{equation}\label{eq:lntau}
\ln\frac{1}{10\ell m\tau_{i}}=\frac{k\ell m^{2}}{4\pi}\Bigl(1-\frac{b_{2}}{2}\Bigr)-\zeta_{1}
-(1-\delta_{i1})\Bigl(\ln b_{i}+\frac{\zeta_{i}}{k\ell m^{2}}\Bigr).
\end{equation}
The factors $4\pi$ and $8\pi$ arise from the linearisation of the minimal surface equation about
$\bT$, that is from the Jacobi operator \eqref{eq:jacobi}: in the force computation
\cite[(3.5)--(3.7)]{Wiygul} the mean curvature $2\tan2\rz\approx4\rz$ of the parallel torus $\bT_{\rz}$
is integrated over a cell of area $2\pi^{2}/(k\ell m^{2})$ and balanced against the flux $2\pi\tau$
of a catenoidal neck, and $8\pi^{2}=4\cdot|\bT|$. The heights of the tori and of the necks satisfy
\cite[(3.6)]{Wiygul}
\begin{equation}\label{eq:heights}
\max_{j}|\rz_{j}|+\max_{i}|\rz^{K}_{i}|\le C\,m^{2}\tau_{1}.
\end{equation}
In particular, since $b_{2}\le2-1/C$ by \cite[(2.12)]{Wiygul}, $\tau_{1}$ decays like
$\exp(-cm^{2})$ with $c=c[N,k,\ell]>0$, and $m^{q}\tau_{1}\to0$ for every fixed $q$; moreover, since
$b_{i}\to d_{i}[N]>0$ by (B2) and $|\zeta|\le c$, all the ratios $\tau_{i}/\tau_{1}$ lie in a compact
subset of $(0,\infty)$ depending only on $N$, for $m\ge m_{0}$.

\subsection{Catenoidal regions of the initial surface}
Let $\bK_{a}:=[-a,a]\times\bS^{1}$ with coordinates $(t,\theta)$, $\theta\in\bR/2\pi\bZ$, and flat
metric $\widehat\chi:=dt^{2}+d\theta^{2}$. For $1\le i\le N-1$ let \cite[(4.24)]{Wiygul}
\begin{equation}\label{eq:ai}
a_{i}:=\arcosh\frac{1}{10\ell m\tau_{i}},\qquad\text{so that}\qquad \tau_{i}\cosh a_{i}=R ,
\end{equation}
and, by \eqref{eq:lntau} and $\arcosh x=\ln x+\ln(1+\sqrt{1-x^{-2}})$,
\begin{equation}\label{eq:aiexp}
a_{i}=\frac{k\ell m^{2}}{4\pi}\Bigl(1-\frac{b_{2}}{2}\Bigr)-\zeta_{1}
-(1-\delta_{i1})\Bigl(\ln b_{i}+\frac{\zeta_{i}}{k\ell m^{2}}\Bigr)
+\ln\bigl(1+\sqrt{1-100\ell^{2}m^{2}\tau_{i}^{2}}\bigr).
\end{equation}
The $i$-th catenoidal region of the initial surface is the image of the map \cite[(4.23), (4.25)]{Wiygul}
\begin{equation}\label{eq:kappa}
\kappa_{i}:\bK_{a_{i}}\to\bS^{3},\qquad
\kappa_{i}(t,\theta):=\Phi\bigl(\rx_{i}+\tau_{i}\cosh t\cos\theta,\ \ry_{i}+\tau_{i}\cosh t\sin\theta,\
\rz^{K}_{i}+\tau_{i}t\bigr),
\end{equation}
where $(\rx_{i},\ry_{i})=((i-1)X,(i-1)Y)$ is a point of $\mathsf L_{(i+1)\bmod2}$; the other tunnels
of the $i$-th layer are the images of $\kappa_{i}$ under $\cG$, and they are pairwise disjoint. The
end circles of a tunnel are the images of $\{t=\pm a_{i}\}$; their projections to the
$(\rx,\ry)$-plane are the circles of radius $R$ about the removal point, and the coordinate $\theta$
is the polar angle about that point.

\begin{lemma}\label{lem:initialcat}
Let $g^{0}$ be the metric induced on the initial surface. Then
\begin{gather*}
\kappa_{i}^{*}g^{0}=\tau_{i}^{2}\cosh^{2}t\,\Bigl(\widehat\chi+\sin\bigl(2\rz^{K}_{i}+2\tau_{i}t\bigr)\,\mathsf B\Bigr),\\
\mathsf B:=\tanh^{2}t\cos2\theta\,dt^{2}-2\tanh t\sin2\theta\,dt\,d\theta-\cos2\theta\,d\theta^{2},
\end{gather*}
and consequently, for $m\ge m_{0}$,
\begin{equation}\label{eq:catcomp}
(1-Cm^{2}\tau_{1})\,\tau_{i}^{2}\cosh^{2}t\,\widehat\chi\ \le\ \kappa_{i}^{*}g^{0}\ \le\
(1+Cm^{2}\tau_{1})\,\tau_{i}^{2}\cosh^{2}t\,\widehat\chi
\qquad\text{on }\bK_{a_{i}}.
\end{equation}
\end{lemma}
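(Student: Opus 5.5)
The plan is a direct pullback computation followed by a pointwise eigenvalue bound for the quadratic form $\mathsf B$.

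\emph{Step 1: the pullback.} Write $\kappa_i=\Phi\circ F$ with
\[
F(t,\theta)=\bigl(\rx_i+\tau_i\cosh t\cos\theta,\ \ry_i+\tau_i\cosh t\sin\theta,\ \rz^K_i+\tau_i t\bigr).
\]
By \eqref{eq:Phimetric},
\[
\kappa_i^*g^0=F^*\bigl(d\rx^2+d\ry^2+d\rz^2\bigr)+\sin(2\rz)\,F^*\bigl(d\rx^2-d\ry^2\bigr),\qquad \rz=\rz^K_i+\tau_i t .
\]
The differentials are
\[
d\rx=\tau_i(\sinh t\cos\theta\,dt-\cosh t\sin\theta\,d\theta),\quad
d\ry=\tau_i(\sinh t\sin\theta\,dt+\cosh t\cos\theta\,d\theta),\quad
d\rz=\tau_i\,dt .
\]
\begin{itemize}
\item The Euclidean part is $\tau_i^2\bigl((\sinh^2t+1)\,dt^2+\cosh^2t\,d\theta^2\bigr)=\tau_i^2\cosh^2t\,\widehat\chi$. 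This is the usual conformality of the catenoid.
\item The second part is
\[
d\rx^2-d\ry^2=\tau_i^2\bigl(\sinh^2t\cos2\theta\,dt^2-2\sinh t\cosh t\sin2\theta\,dt\,d\theta-\cosh^2t\cos2\theta\,d\theta^2\bigr).
\]
Factoring out $\tau_i^2\cosh^2t$ leaves exactly $\mathsf B$.
\end{itemize}
This gives the displayed identity.

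\emph{Step 2: the comparison \eqref{eq:catcomp}.} It suffices to bound $\mathsf B$ pointwise against $\widehat\chi$, uniformly in $(t,\theta)$. The matrix of $\mathsf B$ in the frame $(dt,d\theta)$ is
\[
\begin{pmatrix}\tanh^2t\cos2\theta & -\tanh t\sin2\theta\\ -\tanh t\sin2\theta & -\cos2\theta\end{pmatrix}.
\]
Its entries are all bounded by $1$ in absolute value, so its operator norm is at most $2$ (one could even compute the eigenvalues exactly, but a crude bound is enough). Hence
\[
|\mathsf B|\le 2\,\widehat\chi\quad\text{and}\quad\bigl|\sin(2\rz)\,\mathsf B\bigr|\le 4|\rz|\,\widehat\chi .
\]

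\emph{Step 3: the height bound.} The main point is to control $|\rz|=|\rz^K_i+\tau_i t|$ on $\bK_{a_i}$.
\begin{itemize}
\item By \eqref{eq:heights}, $|\rz^K_i|\le Cm^2\tau_1$.
\item For the other term, $|t|\le a_i$, and by \eqref{eq:aiexp} $a_i\le Cm^2$. The ratios $\tau_i/\tau_1$ lie in a compact subset of $(0,\infty)$, so $\tau_i\le C\tau_1$.
\end{itemize}
Therefore $|\tau_i t|\le Cm^2\tau_1$, and so $|\rz|\le Cm^2\tau_1$.

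Multiplying Step 2 by the conformal factor $\tau_i^2\cosh^2t$ then gives \eqref{eq:catcomp} after renaming $C$. The only condition needed is $m\ge m_0$, so that $m^2\tau_1$ is small and the lower bound is meaningful. I expect this bookkeeping of the uniform bound on $a_i\tau_i$ to be the only nontrivial step; everything else is algebra.
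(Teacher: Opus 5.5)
Your proposal is correct and follows essentially the same route as the paper's proof. It uses the factorisation $\kappa_i=\Phi\circ\widehat\kappa_i$ and the same pullback computation, and it bounds $|\mathsf B(v,v)|\le 2\widehat\chi(v,v)$ by noting that the matrix entries are at most $1$ in absolute value; the height estimate $|\rz^K_i+\tau_i t|\le|\rz^K_i|+\tau_i a_i\le Cm^2\tau_1$ from \eqref{eq:heights} and \eqref{eq:aiexp}, combined with $|\sin s|\le|s|$, is likewise the paper's.
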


\begin{proof}
Write $\kappa_{i}=\Phi\circ\widehat\kappa_{i}$ with $\widehat\kappa_{i}(t,\theta)=(\rx_{i},\ry_{i},\rz^{K}_{i})
+\tau_{i}(\cosh t\cos\theta,\cosh t\sin\theta,t)$. Then
$d\rx=\tau_{i}(\sinh t\cos\theta\,dt-\cosh t\sin\theta\,d\theta)$,
$d\ry=\tau_{i}(\sinh t\sin\theta\,dt+\cosh t\cos\theta\,d\theta)$, $d\rz=\tau_{i}dt$, so that
$d\rx^{2}+d\ry^{2}+d\rz^{2}=\tau_{i}^{2}\cosh^{2}t\,\widehat\chi$ and
$d\rx^{2}-d\ry^{2}=\tau_{i}^{2}\cosh^{2}t\,\mathsf B$; the formula follows from
\eqref{eq:Phimetric} with $\rz=\rz^{K}_{i}+\tau_{i}t$. The symmetric matrix of $\mathsf B$ with
respect to $\widehat\chi$ has entries bounded by $1$ in absolute value, so
$|\mathsf B(v,v)|\le2\widehat\chi(v,v)$. Finally $|\rz^{K}_{i}+\tau_{i}t|\le|\rz^{K}_{i}|+\tau_{i}a_{i}
\le Cm^{2}\tau_{1}$ by \eqref{eq:heights} and \eqref{eq:aiexp}, and $|\sin s|\le|s|$.
\end{proof}

\subsection{Toral regions of the initial surface}
The initial surface $\Sigma^{0}=\Sigma^{0}[N,k,\ell,m,\zeta,\xi]$ is defined in
\cite[(2.29)--(2.30)]{Wiygul} as the $\cG$-orbit of $N$ pieces $\Omega_{1},\dots,\Omega_{N}$, the
$j$-th of which is the image under $\Phi$ of the graph, over a cell of the plane $\{\rz=\rz_{j}\}$
with one or two discs removed, of the function $\phi$ of \cite[(2.6), (2.9)]{Wiygul}. Explicitly,
in polar coordinates $r$ about a removal point $p$ of the $j$-th torus, with $\tau$ the waist radius
and $\rz^{K}$ the height of the neck at $p$,
\begin{equation}\label{eq:phi}
\phi=\rz^{K}+(\rz_{j}-\rz^{K})\,\psi[R,2R](r)+\operatorname{sgn}(\rz_{j}-\rz^{K})\,
\tau\arcosh\frac{r}{\tau}\,\psi[2R,R](r)\qquad(\tau\le r\le2R),
\end{equation}
and $\phi=\rz_{j}$ at distance at least $2R$ from all removal points, where $\psi[a,b]$ is a fixed
smooth monotone cut-off equal to $0$ at $a$ and to $1$ at $b$ \cite[(2.4)]{Wiygul}; for $r\le R$ the
function $\phi$ is the exact catenoid, and this part is the catenoidal region of the previous
subsection. We define the \emph{initial toral region} $\cT^{0}[j]$ to be the part of the $j$-th torus
of $\Sigma^{0}$ lying over $P_{j}$, that is the graph of $\phi_{j}:=\phi|_{P_{j}}$.

\begin{lemma}\label{lem:initialtor}
For $m\ge m_{0}$,
\[
\sup_{P_{j}}|\phi_{j}-\rz_{j}|\le Cm^{2}\tau_{1},\qquad
\sup_{P_{j}}|\nabla\phi_{j}|\le Cm^{3}\tau_{1},\qquad
\sup_{P_{j}}|\nabla^{2}\phi_{j}|\le Cm^{4}\tau_{1},
\]
and consequently, under the projection $\pi_{j}:\cT^{0}[j]\to P_{j}$, $(\rx,\ry,\phi_{j})\mapsto(\rx,\ry)$,
\begin{equation}\label{eq:torcomp}
(1-Cm^{2}\tau_{1})\,(d\rx^{2}+d\ry^{2})\ \le\ (\pi_{j}^{-1})^{*}g^{0}\ \le\ (1+Cm^{2}\tau_{1})\,(d\rx^{2}+d\ry^{2}).
\end{equation}
The map $\pi_{j}$ is $\cG$-equivariant for the actions of $\cG$ on $\Sigma^{0}$ and on $\Tfl$, and it
maps each end circle of a tunnel adjoining $\cT^{0}[j]$ onto the circle $\partial B(p,R)$ about the
corresponding removal point, the tunnel coordinate $\theta$ corresponding to the polar angle.
\end{lemma}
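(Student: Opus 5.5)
The plan is to estimate the three terms of $\phi_{j}-\rz_{j}$ directly from \eqref{eq:phi}, working in polar coordinates about a removal point $p\in Z_{j}$ on the annulus $R\le r\le2R$. By \eqref{eq:ratio} the balls $B(p,2R)$ about distinct removal points are disjoint, and outside their union $\phi_{j}\equiv\rz_{j}$. So it suffices to work on one such annulus. There
\[
\phi_{j}-\rz_{j}=(\rz^{K}-\rz_{j})\bigl(1-\psi[R,2R](r)\bigr)+\operatorname{sgn}(\rz_{j}-\rz^{K})\,\tau\arcosh\frac r\tau\,\psi[2R,R](r).
\]
The first term has size at most $|\rz^{K}-\rz_{j}|\le Cm^{2}\tau_{1}$ by \eqref{eq:heights}. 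For the second, since $\tau\le C\tau_{1}$ and $r\le2R\le C/m$, we get $\tau\arcosh(r/\tau)\le\tau\ln(2r/\tau)\le C\tau_{1}(m^{2}+\ln m)\le Cm^{2}\tau_{1}$. This uses $\ln(1/\tau_{1})\le Cm^{2}$, which follows from \eqref{eq:tau}. That gives the $C^{0}$ bound.

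For derivatives I would use that the cut-offs are rescaled: $\psi[R,2R](r)=\psi_{0}((r-R)/R)$, so each radial derivative of $\psi$ costs a factor $R^{-1}\le Cm$. The derivatives of $\tau\arcosh(r/\tau)$ are $\tau/\sqrt{r^{2}-\tau^{2}}\le C\tau/R$ and $\tau r/(r^{2}-\tau^{2})^{3/2}\le C\tau/R^{2}$ on $r\ge R$. The Cartesian Hessian of a radial function $f(r)$ involves only $f''$ and $f'/r$, and $r\ge R$. By the Leibniz rule, each derivative therefore gains at most a factor $Cm$ over the $C^{0}$ bound $Cm^{2}\tau_{1}$, which gives $m^{3}\tau_{1}$ and $m^{4}\tau_{1}$. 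The logarithmic term is harmless because it only appears multiplied by derivatives of $\psi$, with the $C^{0}$ bound already absorbing it.

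For the metric comparison I would pull back \eqref{eq:Phimetric} along the graph map $(\rx,\ry)\mapsto(\rx,\ry,\phi_{j})$, which gives
\[
(1+\sin2\phi_{j})d\rx^{2}+(1-\sin2\phi_{j})d\ry^{2}+d\phi_{j}^{2}.
\]
Now $|\rz_{j}|\le Cm^{2}\tau_{1}$ by \eqref{eq:heights}, so $|\sin2\phi_{j}|\le2|\phi_{j}|\le Cm^{2}\tau_{1}$, and $|d\phi_{j}|^{2}\le Cm^{6}\tau_{1}^{2}\le Cm^{2}\tau_{1}$ because $m^{4}\tau_{1}\to0$. Together these give \eqref{eq:torcomp} after enlarging $m_{0}$. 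Only the first derivative bound is needed here; the Hessian bound is recorded for later use.

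Equivariance holds because $\Sigma^{0}$ is defined as a $\cG$-orbit and $\cG$ acts on $(\rx,\ry,\rz)$ fixing $\rz$, as described after \eqref{eq:G}. The vertical projection therefore commutes with the action. The end circle $\{t=\pm a_{i}\}$ of $\kappa_{i}$ in \eqref{eq:kappa} projects to $p+\tau_{i}\cosh a_{i}(\cos\theta,\sin\theta)=p+R(\cos\theta,\sin\theta)$ by \eqref{eq:ai}, so $\theta$ is the polar angle. The main obstacle is the bookkeeping of the $\arcosh$ logarithm against the exponentially small $\tau$. In particular, one must check that $\ln(1/\tau_{1})=O(m^{2})$ is exactly what turns the $C^{0}$ bound into $m^{2}\tau_{1}$ rather than something larger.
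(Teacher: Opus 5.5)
Your proposal is correct and follows the paper's proof essentially step by step. The $C^{0}$ bound comes from the height estimate together with $\ln(1/\tau)=O(m^{2})$. The derivative bounds come from the rescaled cut-offs and the explicit derivatives of $\tau\arcosh(r/\tau)$, and the metric comparison from the pulled-back form of \eqref{eq:Phimetric}. Equivariance and the end-circle statement follow from the affine action of $\cG$ and $\tau_{i}\cosh a_{i}=R$. The only difference is minor: the paper bounds $|\rz_{j}-\rz^{K}|$ by citing Wiygul's (2.17) alongside \eqref{eq:heights}, whereas your triangle inequality using \eqref{eq:heights} alone works just as well.
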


\begin{proof}
On the annulus $R\le r\le2R$ about a removal point, \eqref{eq:phi} gives
$|\phi-\rz_{j}|\le|\rz_{j}-\rz^{K}|+\tau\arcosh(2R/\tau)$. By \cite[(2.17)]{Wiygul} and
\eqref{eq:heights}, $|\rz_{j}-\rz^{K}|\le Cm^{2}\tau_{1}$, and $\tau\arcosh(2R/\tau)\le
\tau\ln(4R/\tau)\le Cm^{2}\tau_{1}$ by \eqref{eq:lntau}. Put $f(r):=\tau\arcosh(r/\tau)$; for
$r\ge R\ge2\tau$ one has $|f|\le Cm^{2}\tau_{1}$, $|f'|=\tau/\sqrt{r^{2}-\tau^{2}}\le2\tau/R$ and
$|f''|=\tau r/(r^{2}-\tau^{2})^{3/2}\le2\tau/R^{2}$, while the cut-offs satisfy $|\psi'|\le C/R$ and
$|\psi''|\le C/R^{2}$. Since $R^{-1}=10\ell m$ and $\tau\le C\tau_{1}$, the first derivatives of
\eqref{eq:phi} are bounded by $Cm^{2}\tau_{1}\cdot Cm+2\tau/R\le Cm^{3}\tau_{1}$, and the second
derivatives, the Hessian of a radial function $h(r)$ being bounded by $|h''|+|h'|/r$, by
$Cm^{2}\tau_{1}/R^{2}+C\tau/R^{2}\le Cm^{4}\tau_{1}$. Elsewhere on $P_{j}$, $\phi_{j}=\rz_{j}$. The
induced metric of the graph is, by \eqref{eq:Phimetric},
\[
(1+\sin2\phi_{j})\,d\rx^{2}+(1-\sin2\phi_{j})\,d\ry^{2}+(\partial_{\rx}\phi_{j}\,d\rx+\partial_{\ry}\phi_{j}\,d\ry)^{2},
\]
which differs from $d\rx^{2}+d\ry^{2}$ by a form bounded by $(2|\phi_{j}|+|\nabla\phi_{j}|^{2})(d\rx^{2}+d\ry^{2})
\le Cm^{2}\tau_{1}(d\rx^{2}+d\ry^{2})$. Equivariance holds because $\Sigma^{0}$ is the
$\cG$-orbit of $\bigcup_{j}\Omega_{j}$ and $\cG$ acts through $\Phi$ by the affine isometries listed
after \eqref{eq:G}. The statement about the end circles follows from \eqref{eq:kappa} at
$t=\pm a_{i}$, where $\tau_{i}\cosh a_{i}=R$.
\end{proof}

\begin{lemma}[Second fundamental form of the initial surface]\label{lem:initialA}
Let $A^{0}$ be the second fundamental form of $\Sigma^{0}$ and $|A^{0}|$ its norm with respect to
$g^{0}$. For $m\ge m_{0}$,
\[
|A^{0}|\le C\ \text{ on }\cT^{0}[j]\quad(1\le j\le N),\qquad
|A^{0}|\circ\kappa_{i}\le C\,\tau_{i}^{-1}\sech t\ \text{ on }\bK_{a_{i}}\quad(1\le i\le N-1).
\]
\end{lemma}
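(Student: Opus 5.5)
We compute the second fundamental form in the coordinates furnished by $\Phi$, and treat the toral and catenoidal regions separately. The idea is to compare with the Euclidean second fundamental form of the same graph or catenoid in $(\rx,\ry,\rz)$-space. The metric \eqref{eq:Phimetric} is $g_{\mathrm{eucl}}+\sin(2\rz)(d\rx^{2}-d\ry^{2})$, and its Christoffel symbols are bounded by an absolute constant on $|\rz|\le\pi/8$. Therefore, for a hypersurface $S$ in this region,
\[
|A_{g}(S)-A_{\mathrm{eucl}}(S)|\le C\bigl(1+|A_{\mathrm{eucl}}(S)|\,|\sin2\rz|\bigr),
\]
with norms measured in either metric, which are uniformly equivalent there. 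This estimate follows from the standard formula for the change of the second fundamental form under a change of ambient metric. The normals and the inner products change by a factor $1+O(|\rz|)$, and the term involving the Christoffel symbols contributes $O(1)$. By \eqref{eq:heights} and Lemma \ref{lem:initialcat} we have $|\rz|\le Cm^{2}\tau_{1}\ll1$ on all of $\Sigma^{0}$, so the comparison applies everywhere.

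\emph{Toral regions.} Over $P_{j}$ the surface is the graph of $\phi_{j}$. Its Euclidean second fundamental form is bounded by $|\nabla^{2}\phi_{j}|/(1+|\nabla\phi_{j}|^{2})^{1/2}$, and by Lemma \ref{lem:initialtor} this is at most $Cm^{4}\tau_{1}\le1$ for $m\ge m_{0}$, since $\tau_{1}$ decays like $e^{-cm^{2}}$. The comparison then gives $|A^{0}|\le C$. This bound is consistent with the fact that the parallel tori have principal curvatures close to $\pm1$.

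\emph{Catenoidal regions.} In $(\rx,\ry,\rz)$-space the image of $\widehat\kappa_{i}$ is an exact Euclidean catenoid of waist $\tau_{i}$. Its principal curvatures are $\pm\tau_{i}^{-1}\sech^{2}t$, so $|A_{\mathrm{eucl}}|=\sqrt2\,\tau_{i}^{-1}\sech^{2}t$. Combining this with the comparison above and $|\sin2\rz|\le Cm^{2}\tau_{1}$ gives
\[
|A^{0}|\circ\kappa_{i}\le C\bigl(1+\tau_{i}^{-1}\sech^{2}t\bigr).
\]
It remains to absorb the constant term. On $\bK_{a_{i}}$ we have $\tau_{i}\cosh t\le\tau_{i}\cosh a_{i}=R\le1$ by \eqref{eq:ai}, which gives $1\le\tau_{i}^{-1}\sech t$. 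We also have $\sech^{2}t\le\sech t$. Together these yield $|A^{0}|\circ\kappa_{i}\le C\tau_{i}^{-1}\sech t$.

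The main obstacle is making the comparison formula precise while keeping every constant uniform in $m$. This requires that the Christoffel symbols of \eqref{eq:Phimetric} be bounded, which holds because they involve only $\cos2\rz$ and $\sin2\rz$. It also requires that the unit normals in the two metrics differ by $O(|\rz|)$ relative to $|A_{\mathrm{eucl}}|$. The catenoid term needs particular care, since $|A_{\mathrm{eucl}}|$ is as large as $\tau_{i}^{-1}$ at the waist. At the waist, however, the metric perturbation $\sin(2\rz)\mathsf B$ contributes only a relative factor $Cm^{2}\tau_{1}$, as in Lemma \ref{lem:initialcat}, so the estimate is unaffected.
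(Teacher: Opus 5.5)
Your proposal is correct and follows the paper's proof essentially step for step. You compare with the Euclidean second fundamental form of the same graph or catenoid in the $(\rx,\ry,\rz)$ coordinates via the bounded Christoffel symbols, then use $|\nabla^{2}\phi_{j}|\le Cm^{4}\tau_{1}\le1$ on the toral regions, and on the catenoidal regions $\sqrt2\,\tau_{i}^{-1}\sech^{2}t\le\sqrt2\,\tau_{i}^{-1}\sech t$ together with $\tau_{i}^{-1}\sech t\ge R^{-1}\ge1$. Your refined comparison, with relative error $O(|\sin2\rz|)$ on the $A_{\mathrm{eucl}}$ term, is more than needed: the paper uses only the cruder bound $|A_{h}|\le C(|A_{E}|+1)$, which requires nothing beyond boundedness of $h(\nu_{h},\nu_{E})$ and of the Christoffel term, so the difficulty you flag at the waist never actually arises.
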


\begin{proof}
Let $U:=\bR^{2}\times(-\pi/8,\pi/8)$ and $h:=\Phi^{*}g_{\bS^{3}}|_{U}=g_{E}+\sin(2\rz)(d\rx^{2}-d\ry^{2})$,
where $g_{E}$ is the Euclidean metric. Since $|\sin2\rz|\le\sin(\pi/4)<3/4$ on $U$, we have
$\tfrac14g_{E}\le h\le2g_{E}$, and the Christoffel symbols of $h$ in the coordinates $(\rx,\ry,\rz)$,
namely $\Gamma^{\rz}_{\rx\rx}=-\Gamma^{\rz}_{\ry\ry}=-\cos2\rz$,
$\Gamma^{\rx}_{\rx\rz}=\Gamma^{\rx}_{\rz\rx}=\cos2\rz/(1+\sin2\rz)$,
$\Gamma^{\ry}_{\ry\rz}=\Gamma^{\ry}_{\rz\ry}=-\cos2\rz/(1-\sin2\rz)$ and zero otherwise, are bounded
by $4$. By \eqref{eq:heights}, $\Sigma^{0}$ lies in $\Phi(U)$ for $m\ge m_{0}$, and $\Phi$ is a
local isometry of $(U,h)$ onto its image.

Let $F:S\to U$ be an immersion of a surface, with unit normals $\nu_{E}$ and $\nu_{h}$ and second
fundamental forms $A_{E}$ and $A_{h}$ relative to $g_{E}$ and to $h$. For tangent vectors $V,W$,
$A_{h}(V,W)=h\bigl(\nu_{h},\partial_{V}\partial_{W}F+\Gamma(\partial_{V}F,\partial_{W}F)\bigr)$, where
$\Gamma(\cdot,\cdot)$ denotes the bilinear expression in the Christoffel symbols of $h$. By the Gauss
formula in Euclidean space, $\partial_{V}\partial_{W}F=dF(\nabla_{V}W)+A_{E}(V,W)\nu_{E}$ with
$\nabla$ the Levi-Civita connection of $F^{*}g_{E}$, and $\nu_{h}$ is $h$-orthogonal to $dF(TS)$;
hence
\[
A_{h}(V,W)=A_{E}(V,W)\,h(\nu_{h},\nu_{E})+h\bigl(\nu_{h},\Gamma(\partial_{V}F,\partial_{W}F)\bigr).
\]
Here $|h(\nu_{h},\nu_{E})|\le|\nu_{E}|_{h}\le\sqrt2$ and
$|h(\nu_{h},\Gamma(X,Y))|\le\sqrt2\,|\Gamma(X,Y)|_{E}\le C|X|_{E}|Y|_{E}$, which is at most
$C|X|_{h}|Y|_{h}$. Since $F^{*}g_{E}$ and $F^{*}h$ are comparable, $|A_{h}|_{F^{*}h}\le C\bigl(|A_{E}|_{F^{*}g_{E}}+1\bigr)$
with an absolute constant $C$.

On $\cT^{0}[j]$ take $F(\rx,\ry)=(\rx,\ry,\phi_{j}(\rx,\ry))$: the induced Euclidean metric is
$\ge d\rx^{2}+d\ry^{2}$ and $A_{E}=\nabla^{2}\phi_{j}/\sqrt{1+|\nabla\phi_{j}|^{2}}$ in the
coordinate frame, so $|A_{E}|\le|\nabla^{2}\phi_{j}|\le Cm^{4}\tau_{1}\le1$ by Lemma
\ref{lem:initialtor}. On the $i$-th catenoidal region take $F=\widehat\kappa_{i}$, the Euclidean
catenoid of waist radius $\tau_{i}$, whose principal curvatures are $\pm\tau_{i}^{-1}\sech^{2}t$, so
$|A_{E}|=\sqrt2\,\tau_{i}^{-1}\sech^{2}t\le\sqrt2\,\tau_{i}^{-1}\sech t$; since
$\tau_{i}^{-1}\sech t\ge\tau_{i}^{-1}\sech a_{i}=R^{-1}\ge1$ on $\bK_{a_{i}}$, the constant $1$ is
absorbed.
\end{proof}

\subsection{The minimal surfaces}\label{ss:minimal}
The main theorem of \cite{Wiygul} produces, for $m$ large, parameters $\zeta,\xi$ bounded by
$c[N,k,\ell]$ and a smooth $\cG$-invariant function $u$ on $\Sigma^{0}=\Sigma^{0}[N,k,\ell,m,\zeta,\xi]$
whose normal graph
\begin{equation}\label{eq:Psi}
\Psi(p):=\exp_{p}\bigl(u(p)\,\nu^{0}(p)\bigr),\qquad p\in\Sigma^{0},
\end{equation}
$\nu^{0}$ being the unit normal of $\Sigma^{0}$, is a closed embedded minimal surface invariant
under $\cG$ \cite[Theorem 1.1, Lemma 4.42, Theorem 6.50]{Wiygul}; the action of $\cG$ on normal perturbations carries no sign because every element of $\cG$ preserves each side of $\Sigma^{0}$ \cite[(4.4)]{Wiygul}. We now record the precise form of the estimate on $u$ and the three consequences of it which will be used.

\emph{Weighted norms.} For a function $v$ on $\Sigma^{0}$, a Riemannian metric $h$ on $\Sigma^{0}$
and a positive weight $f$, the weighted H\"older norm of \cite[(4.6)]{Wiygul} is
\begin{equation}\label{eq:wnorm}
\|v:C^{2,\alpha}(\Sigma^{0},h,f)\|:=\sup_{p\in\Sigma^{0}}f(p)^{-1}\,\|v:C^{2,\alpha}(B[p,1,h],h)\|,
\end{equation}
where $B[p,1,h]$ is the $h$-ball of radius $1$ about $p$ and the norm on the ball is the sum of the
$C^{2}$ norm and of the H\"older seminorm of the second derivatives, all measured with $h$. Let
$\rho$ be the $\cG$-invariant conformal factor of \cite[(4.11)--(4.14)]{Wiygul} and let
$\chi:=\rho^{2}g^{0}$ \cite[(4.9)]{Wiygul}. The \emph{decay norms} of \cite[\S4]{Wiygul}, paragraph
``Decay norms and a global estimate of the mean curvature'', are defined there, for
$\alpha\in(0,1)$ and $\gamma\in[0,\infty)$, by
\begin{equation}\label{eq:decaynorm}
\|v\|_{2,\alpha,\gamma}:=\bigl\|v:C^{2,\alpha}\bigl(\Sigma^{0},\chi,m^{\gamma}\rho^{-\gamma}\bigr)\bigr\| .
\end{equation}
Since the $C^{2,\alpha}$ norm on the unit $\chi$-ball about $p$ dominates $|v(p)|+|dv|_{\chi}(p)$,
\eqref{eq:wnorm} and \eqref{eq:decaynorm} give the pointwise bound
\begin{equation}\label{eq:normbound}
|v(p)|+|dv|_{\chi}(p)\ \le\ m^{\gamma}\rho(p)^{-\gamma}\,\|v\|_{2,\alpha,\gamma}\qquad(p\in\Sigma^{0}),
\end{equation}
and, since $\chi=\rho^{2}g^{0}$ and $\chi^{-1}=\rho^{-2}(g^{0})^{-1}$ on covectors,
\begin{equation}\label{eq:dug}
|dv|_{g^{0}}=\rho\,|dv|_{\chi}\qquad\text{on }\Sigma^{0}.
\end{equation}

\emph{The conformal factor.} The function $\rho$ is explicit. By \cite[(4.11)--(4.13)]{Wiygul} it
equals the constant $m$ at distance at least $2R$ from all removal points and, in polar coordinates
$r$ about a removal point, $m+(r^{-1}-m)\,\psi[2R,R](r)$ on the transition annulus $R\le r\le2R$,
while on the catenoidal regions \cite[(4.27)]{Wiygul} $\rho\circ\kappa_{i}=\tau_{i}^{-1}\sech t$.
Since $r^{-1}\ge(2R)^{-1}=5\ell m\ge m$ on the annulus,
\begin{equation}\label{eq:rho}
m\le\rho\le R^{-1}=10\ell m\ \text{ on }\cT^{0}[j],\qquad
\rho\circ\kappa_{i}=\tau_{i}^{-1}\sech t\ \text{ on }\bK_{a_{i}} .
\end{equation}
On $\bK_{a_{i}}$ one has $\tau_{i}^{-1}\sech t\ge\tau_{i}^{-1}\sech a_{i}=R^{-1}\ge m$ by
\eqref{eq:ai}, and $\tau_{i}^{-1}\sech t\le\tau_{i}^{-1}\le C\tau_{1}^{-1}$ by the remark after
\eqref{eq:heights}. Hence, for $m\ge m_{0}$,
\begin{equation}\label{eq:rhobounds}
m\ \le\ \rho\ \le\ C\,\tau_{1}^{-1}\qquad\text{on }\Sigma^{0},\qquad C=C[N,k,\ell],
\end{equation}
in agreement with \cite[(4.15)]{Wiygul}.

\emph{The final estimate.} The function $u$ of \cite[Theorem 6.50]{Wiygul} satisfies
\cite[(6.61)]{Wiygul}
\begin{equation}\label{eq:660}
\|u\|_{2,\alpha,\gamma}\ \le\ 2C_{1}\tau_{1},\qquad C_{1}=C_{1}[N,k,\ell],
\end{equation}
where $\alpha,\gamma\in(0,1)$ are the exponents fixed in the hypotheses of
\cite[Theorem 6.50]{Wiygul}; the linear theory of \cite[\S5]{Wiygul} is available for every such pair.
We stress that $u$ is a function on $\Sigma^{0}[N,k,\ell,m,\zeta,\xi]$ itself, and that the norm in
\eqref{eq:660} is the decay norm \eqref{eq:decaynorm} of that surface: in the proof of
\cite[Theorem 6.50]{Wiygul} one has $u=u_{1}+\mathcal P^{-1}v$, where $u_{1}$ is the first-order
solution of \cite[Corollary 5.66]{Wiygul} on $\Sigma^{0}[N,k,\ell,m,\zeta,\xi]$ and $v$ is a fixed point
living on the reference surface $\Sigma^{0}[N,k,\ell,m,0,0]$, transported by the explicit
$\cG$-equivariant diffeomorphism of \cite[(6.42)--(6.44)]{Wiygul}, whose norm is bounded in \cite[Lemma 6.45]{Wiygul}; the norm estimate is obtained in \cite[(6.57)]{Wiygul} and recorded for the fixed point in \cite[(6.61)]{Wiygul}. Of \eqref{eq:660} we shall use only the pointwise information \eqref{eq:normbound} at the level of first derivatives, and of the weight in \eqref{eq:decaynorm} only the positivity of $\gamma$; the specific values of $\alpha$ and $\gamma$
play no role (Lemma \ref{lem:pointwise}). We retain from all this the following property.

\begin{definition}\label{def:stacked}
Let $N\ge2$, $k,\ell\ge1$ and $m\ge m_{0}[N,k,\ell]$, and let
$\Sigma^{0}=\Sigma^{0}[N,k,\ell,m,\zeta,\xi]$ be an initial surface with $|\zeta|,|\xi|\le c[N,k,\ell]$.
A closed embedded minimal surface $\Sigma\subset\bS^{3}$ is a \emph{stacked Clifford torus of type
$(N,k,\ell,m)$} if it is $\cG[k,\ell,m]$-invariant and there is a $\cG$-equivariant diffeomorphism
$\Psi:\Sigma^{0}\to\Sigma$ such that the induced metrics $g^{0}$ of $\Sigma^{0}$ and $g$ of $\Sigma$
satisfy
\begin{equation*}
\bigl(1-m^{-2}\bigr)\,g^{0}\ \le\ \Psi^{*}g\ \le\ \bigl(1+m^{-2}\bigr)\,g^{0}
\qquad\text{as quadratic forms on }T\Sigma^{0}. \tag{W}\label{eq:W}
\end{equation*}
\end{definition}

The verification that the surfaces of \cite{Wiygul} have this property rests on two lemmas: a
formula for the metric of a normal graph, and a pointwise bound derived from \eqref{eq:660}.

\begin{lemma}[Metric of a normal graph]\label{lem:graphmetric}
Let $M\subset\bS^{3}$ be a closed embedded surface with induced metric $g^{0}$, unit normal
$\nu^{0}$, second fundamental form $A^{0}$ and shape operator $S$, $g^{0}(SX,Y)=A^{0}(X,Y)$, and let
$u\in C^{\infty}(M)$. The map $\Psi(p):=\exp_{p}(u(p)\nu^{0}(p))$ satisfies
\begin{equation}\label{eq:pullback}
\Psi^{*}g=\cos^{2}u\;g^{0}-\sin(2u)\,A^{0}+\sin^{2}u\;g^{0}(S\cdot,S\cdot)+du\otimes du ,
\end{equation}
$g$ being the round metric. Let $\|S\|$ denote the operator norm of $S$ with respect to $g^{0}$ and
put $s:=|u|+|u|\,\|S\|+|du|_{g^{0}}$. At every point where $s\le1$,
\[
(1-5s)\,g^{0}\ \le\ \Psi^{*}g\ \le\ (1+5s)\,g^{0}\qquad\text{as quadratic forms.}
\]
\end{lemma}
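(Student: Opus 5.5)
The plan is to compute the differential of $\Psi$ via Jacobi fields along the normal geodesics of the round sphere, and then to bound the three correction terms in \eqref{eq:pullback} against $g^{0}$.

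\emph{Formula \eqref{eq:pullback}.} Fix $p\in M$ and $X\in T_{p}M$, and write $\gamma_{p}(s)=\exp_{p}(s\nu^{0}(p))=\cos s\,p+\sin s\,\nu^{0}(p)$ in $\bR^{4}$. This closed form holds because geodesics of $\bS^{3}$ are great circles. Then
\[
\Psi(p)=\cos u(p)\,p+\sin u(p)\,\nu^{0}(p).
\]
Differentiating in the direction $X$ gives
\[
d\Psi(X)=\cos u\,X+\sin u\,d\nu^{0}(X)+du(X)\bigl(-\sin u\,p+\cos u\,\nu^{0}\bigr).
\]
Since $M\subset\bS^{3}$ and $\nu^{0}\perp p$, the Euclidean derivative $d\nu^{0}(X)$ is tangent to $M$. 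With the sign convention $g^{0}(SX,Y)=A^{0}(X,Y)$ it equals $-SX$. Hence the first two terms lie in $T_{p}M$, while the last term is a multiple of the unit vector $\gamma_{p}'(u)$, which is orthogonal to $T_{p}M$. Expanding the Euclidean inner product (the round metric is its restriction) gives
\[
\Psi^{*}g(X,Y)=g^{0}\bigl((\cos u-\sin u\,S)X,(\cos u-\sin u\,S)Y\bigr)+du(X)\,du(Y).
\]
Expanding the first term and using the symmetry of $S$ yields exactly \eqref{eq:pullback}. I would state the sign convention for $A^{0}$ explicitly, since flipping $\nu^{0}$ changes the sign of the middle term. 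This does not affect the estimate.

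\emph{Quadratic form estimate.} For a $g^{0}$-unit vector $X$, bound each deviation from $g^{0}(X,X)=1$ separately:
\begin{itemize}
\item $|\cos^{2}u-1|=\sin^{2}u\le u^{2}\le |u|$, using $|u|\le s\le1$;
\item $|\sin(2u)A^{0}(X,X)|\le 2|u|\,\|S\|$;
\item $\sin^{2}u\,|SX|^{2}\le u^{2}\|S\|^{2}\le (|u|\,\|S\|)^{2}\le |u|\,\|S\|$, using $|u|\,\|S\|\le s\le 1$;
\item $du(X)^{2}\le |du|_{g^{0}}^{2}\le |du|_{g^{0}}$, using $|du|_{g^{0}}\le1$.
\end{itemize}
Summing, $|\Psi^{*}g(X,X)-1|\le |u|+3|u|\,\|S\|+|du|_{g^{0}}\le 3s\le 5s$. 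By homogeneity this gives the two-sided comparison of quadratic forms.

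\emph{Main obstacle.} The only delicate point is justifying that $d\nu^{0}(X)$ is tangent to $M$ and equals $-SX$. This follows by differentiating the identities $\langle\nu^{0},\nu^{0}\rangle=1$ and $\langle\nu^{0},p\rangle=0$, which show that $d\nu^{0}(X)$ is orthogonal to both $\nu^{0}$ and $p$. In addition, the Euclidean connection and the Levi-Civita connection of $\bS^{3}$ differ only by a component along $p$, so the shape operator may be computed with either. The remaining steps are routine algebra.
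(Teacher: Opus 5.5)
Your proposal is correct and follows the paper's proof essentially step by step. Both use the explicit great-circle formula $\Psi=\cos u\,p+\sin u\,\nu^{0}$, derive $D_{X}\nu^{0}=-SX$ by differentiating $\langle\nu^{0},\nu^{0}\rangle=1$ and $\langle\nu^{0},p\rangle=0$, split $d\Psi(X)$ orthogonally into a part in $T_{p}M$ and a multiple of the unit normal $-\sin u\,p+\cos u\,\nu^{0}$, and then bound the terms one at a time. Your final bookkeeping is slightly tighter, giving $3s$ where the paper gets $5s$, which of course still implies the stated inequality.
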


\begin{proof}
Regard $\bS^{3}\subset\bR^{4}$, write $p$ for the position vector and $\nu^{0}(p)\in T_{p}\bS^{3}$
for the unit normal. The geodesics of $\bS^{3}$ being $\tau\mapsto\cos\tau\,p+\sin\tau\,\nu$, we have
$\Psi(p)=\cos u(p)\,p+\sin u(p)\,\nu^{0}(p)$. For $X\in T_{p}M$, differentiating
$\langle\nu^{0},p\rangle=0$ and $\langle\nu^{0},\nu^{0}\rangle=1$ gives
$\langle D_{X}\nu^{0},p\rangle=-\langle\nu^{0},X\rangle=0$ and $\langle D_{X}\nu^{0},\nu^{0}\rangle=0$,
so $D_{X}\nu^{0}\in T_{p}M$, and $\langle D_{X}\nu^{0},Y\rangle=-\langle\nu^{0},D_{X}Y\rangle=-A^{0}(X,Y)$
for $Y\in T_{p}M$: thus $D_{X}\nu^{0}=-SX$. Hence
\[
d\Psi(X)=\cos u\,X-\sin u\,SX+du(X)\,e,\qquad e:=-\sin u\,p+\cos u\,\nu^{0},
\]
and $e$ is a unit vector orthogonal to $T_{p}M$. Expanding $|d\Psi(X)|^{2}$ gives
\eqref{eq:pullback}. Using $1-\cos^{2}u\le u^{2}$, $|\sin2u|\le2|u|$ and $\sin^{2}u\le u^{2}$,
\eqref{eq:pullback} gives, for every $X$,
\[
\bigl|\Psi^{*}g(X,X)-g^{0}(X,X)\bigr|\le\bigl(u^{2}+2|u|\,\|S\|+u^{2}\|S\|^{2}+|du|_{g^{0}}^{2}\bigr)\,g^{0}(X,X)
\le5s\,g^{0}(X,X)\qquad\text{if }s\le1 ,
\]
since then $u^{2}\le|u|\le s$, $2|u|\,\|S\|\le2s$, $u^{2}\|S\|^{2}\le s^{2}\le s$ and
$|du|_{g^{0}}^{2}\le s^{2}\le s$.
\end{proof}

\begin{lemma}[Pointwise smallness from the weighted estimate]\label{lem:pointwise}
Let $\Sigma^{0}=\Sigma^{0}[N,k,\ell,m,\zeta,\xi]$ with $|\zeta|,|\xi|\le c[N,k,\ell]$ and $m\ge m_{0}$,
and let $u$ be a smooth function on $\Sigma^{0}$ satisfying
\[
\bigl\|u:C^{2,\alpha}\bigl(\Sigma^{0},\chi,m^{\gamma}\rho^{-\gamma}\bigr)\bigr\|\ \le\ K\,\tau_{1}
\]
for some $\alpha\in(0,1)$, $\gamma>0$ and $K>0$. Then, with $s$ as in Lemma \ref{lem:graphmetric}
for $M=\Sigma^{0}$,
\[
\sup_{\Sigma^{0}}s\ \le\ C\,K\,m\,\tau_{1}^{\min\{\gamma,1\}},\qquad C=C[N,k,\ell].
\]
\end{lemma}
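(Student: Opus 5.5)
The plan is to bound each of the three terms of $s=|u|+|u|\,\|S\|+|du|_{g^{0}}$ pointwise, using only the pointwise consequence \eqref{eq:normbound} of the weighted norm together with the two-sided bound \eqref{eq:rhobounds} on $\rho$ and the curvature bounds of Lemma \ref{lem:initialA}. From \eqref{eq:normbound} with $\|u\|\le K\tau_{1}$ we get at every $p\in\Sigma^{0}$
\[
|u(p)|+|du|_{\chi}(p)\le K\tau_{1}\,m^{\gamma}\rho(p)^{-\gamma}\le K\tau_{1},
\]
since $\rho\ge m$. By \eqref{eq:dug}, $|du|_{g^{0}}=\rho\,|du|_{\chi}\le K\tau_{1}m^{\gamma}\rho^{1-\gamma}$.

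\emph{First term and derivative term.} Plainly $|u|\le K\tau_{1}\le Km\tau_{1}^{\min\{\gamma,1\}}$, because $\tau_{1}\le1$. For the derivative I split according to $\gamma$. If $\gamma\ge1$, then $\rho^{1-\gamma}\le m^{1-\gamma}$, hence $|du|_{g^{0}}\le Km\tau_{1}$. If $0<\gamma<1$, I use $\rho\le C\tau_{1}^{-1}$ from \eqref{eq:rhobounds}. This gives
\[
|du|_{g^{0}}\le K\tau_{1}m^{\gamma}C\tau_{1}^{\gamma-1}=CKm^{\gamma}\tau_{1}^{\gamma}\le CKm\tau_{1}^{\gamma}.
\]
This is where the exponent $\min\{\gamma,1\}$ comes from. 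The worst case is the waist of a catenoid, where $\rho\sim\tau_{1}^{-1}$, and the positivity of $\gamma$ is exactly what leaves a power of $\tau_{1}$.

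\emph{Curvature term.} On the toral regions $\|S\|\le|A^{0}|\le C$ by Lemma \ref{lem:initialA}, so $|u|\|S\|\le CK\tau_{1}$ there. On the catenoidal regions Lemma \ref{lem:initialA} gives $\|S\|\le C\tau_{i}^{-1}\sech t=C\rho$ by \eqref{eq:rho}. Hence $|u|\,\|S\|\le CK\tau_{1}m^{\gamma}\rho^{1-\gamma}$, which is the same expression as for $|du|_{g^{0}}$ and is bounded by the same case analysis.

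\emph{Transition annuli.} One point needs care: the initial surface is not exactly toral or catenoidal on the transition annuli $R\le r\le2R$. These annuli belong to $\cT^{0}[j]$ as defined (the graph over $P_{j}$), and there Lemma \ref{lem:initialA} still gives $|A^{0}|\le C$ and \eqref{eq:rho} gives $\rho\le10\ell m$. So the toral estimate covers them, and since $\Sigma^{0}$ is the union of the toral and catenoidal regions, no gap remains. I expect this bookkeeping of which region each point lies in to be the only delicate step. Summing the three bounds yields $\sup s\le CKm\tau_{1}^{\min\{\gamma,1\}}$.
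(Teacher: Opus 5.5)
Your proposal is correct and follows essentially the same route as the paper. Both arguments combine the pointwise bound \eqref{eq:normbound}, the relation \eqref{eq:dug}, the curvature bounds of Lemma \ref{lem:initialA} and the two-sided bound \eqref{eq:rhobounds}, with the same case split on $\gamma$. The only difference is cosmetic: the paper merges your region-by-region treatment into the single global inequality $\|S\|\le C_{A}\rho$ (using $\rho\ge m\ge1$ on the toral regions), which gives $s\le(2+C_{A})\,m^{\gamma}\rho^{1-\gamma}K\tau_{1}$ in one line.
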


\begin{proof}
Let $p\in\Sigma^{0}$ and $f:=m^{\gamma}\rho^{-\gamma}$. By \eqref{eq:normbound} and
\eqref{eq:dug}, $|u(p)|\le f(p)K\tau_{1}$ and $|du|_{g^{0}}(p)\le\rho(p)f(p)K\tau_{1}$. By Lemma
\ref{lem:initialA} and \eqref{eq:rho}, $\|S\|\le|A^{0}|\le C_{A}\,\rho$ on all of $\Sigma^{0}$ with
$C_{A}=C_{A}[N,k,\ell]$: on the toral regions $|A^{0}|\le C\le C\rho$ because $\rho\ge m\ge1$, and
on the catenoidal regions $|A^{0}|\le C\tau_{i}^{-1}\sech t=C\rho$. Hence, using $\rho\ge1$,
\[
s(p)\le\bigl(1+C_{A}\rho(p)+\rho(p)\bigr)f(p)K\tau_{1}\le(2+C_{A})\,\rho(p)f(p)\,K\tau_{1},
\qquad\rho f=m^{\gamma}\rho^{1-\gamma}.
\]
If $\gamma\ge1$, then $\rho^{1-\gamma}\le m^{1-\gamma}$ by the lower bound in \eqref{eq:rhobounds},
so $\rho f\le m$ and $s(p)\le(2+C_{A})Km\tau_{1}$. If $0<\gamma<1$, then
$\rho^{1-\gamma}\le C^{1-\gamma}\tau_{1}^{\gamma-1}$ by the upper bound in \eqref{eq:rhobounds}, so
$\rho f\,\tau_{1}\le C\,m^{\gamma}\tau_{1}^{\gamma}\le C\,m\,\tau_{1}^{\gamma}$ and
$s(p)\le(2+C_{A})CKm\tau_{1}^{\gamma}$. In both cases $s(p)\le CKm\tau_{1}^{\min\{\gamma,1\}}$.
\end{proof}

\begin{corollary}[The surfaces of \cite{Wiygul} are stacked Clifford tori]\label{cor:wiygulW}
Let $\Sigma$ be the normal graph \eqref{eq:Psi} over $\Sigma^{0}=\Sigma^{0}[N,k,\ell,m,\zeta,\xi]$,
$|\zeta|,|\xi|\le c[N,k,\ell]$, of the smooth $\cG$-invariant function $u$ produced by
\cite[Theorem 6.50]{Wiygul}, which satisfies \eqref{eq:660}. Then, for $m\ge m_{0}[N,k,\ell]$,
$\Sigma$ is a stacked Clifford torus of type $(N,k,\ell,m)$, with $\Psi$ as in \eqref{eq:Psi}.
\end{corollary}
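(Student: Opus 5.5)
We check the requirements of Definition \ref{def:stacked} one at a time. By \cite[Theorem 1.1, Lemma 4.42, Theorem 6.50]{Wiygul}, $\Sigma=\Psi(\Sigma^{0})$ is a closed embedded minimal surface in $\bS^{3}$ and is $\cG$-invariant. The parameters satisfy $|\zeta|,|\xi|\le c[N,k,\ell]$ and $m\ge m_{0}$, as the definition asks. For equivariance, note that $u$ is $\cG$-invariant. Every element $\sigma\in\cG$ preserves each side of $\Sigma^{0}$ \cite[(4.4)]{Wiygul}, so $d\sigma(\nu^{0}(p))=\nu^{0}(\sigma p)$. Since $\sigma$ is an isometry of $\bS^{3}$, it commutes with the exponential map, and therefore
\[
\sigma\Psi(p)=\exp_{\sigma p}\bigl(u(p)\nu^{0}(\sigma p)\bigr)=\exp_{\sigma p}\bigl(u(\sigma p)\nu^{0}(\sigma p)\bigr)=\Psi(\sigma p).
\]
So $\Psi$ is $\cG$-equivariant.

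The main step is the metric comparison (W). We apply Lemma \ref{lem:pointwise} with $K=2C_{1}$ and the fixed exponents $\alpha,\gamma\in(0,1)$ of \eqref{eq:660}. It gives
\[
\sup_{\Sigma^{0}}s\le C m\,\tau_{1}^{\gamma}.
\]
By the remark after \eqref{eq:heights}, $\tau_{1}\le C\exp(-cm^{2})$, so $m^{3}\tau_{1}^{\gamma}\to0$. After enlarging $m_{0}$ we therefore have $s\le\min\{1,\tfrac15m^{-2}\}$ everywhere on $\Sigma^{0}$. Lemma \ref{lem:graphmetric}, applied with $M=\Sigma^{0}$, then gives
\[
(1-m^{-2})g^{0}\le\Psi^{*}g\le(1+m^{-2})g^{0}.
\]
This is (W).

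It remains to show that $\Psi$ is a diffeomorphism onto $\Sigma$. Since $s\le1/5$, (W) shows that $d\Psi$ is injective at every point, so $\Psi$ is an immersion. Its image is the surface produced in \cite{Wiygul}, which is embedded; by construction that surface is exactly the normal graph, so $\Psi$ maps onto $\Sigma$. For injectivity we first bound $|u|$. On the toral regions it is at most $C\tau_{1}$, and on a catenoidal region it is at most $C\tau_{1}(m\tau_{i}\cosh t)^{\gamma}$, so in both cases $|u|$ is small compared with the local normal injectivity radius. That radius is of order $\tau_{i}\cosh t$ on a neck and of order $m^{2}\tau_{1}$ between the nested tori. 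Hence the normal exponential map on a tubular neighbourhood of width comparable to $|u|$ is injective, and $\Psi$ is injective. Alternatively, $\Psi$ is a local diffeomorphism between closed connected surfaces that is homotopic to an embedding through normal graphs of $tu$, which is also an embedding for small $t$. So it is a covering map of degree one, hence a diffeomorphism.

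The only delicate point is this last injectivity check. If it is not already stated in \cite[Theorem 1.1]{Wiygul}, which presents $\Sigma$ as the graph of $u$ over $\Sigma^{0}$, the tubular-neighbourhood comparison sketched above has to be carried out region by region. Everything else reduces to Lemmas \ref{lem:graphmetric} and \ref{lem:pointwise}.
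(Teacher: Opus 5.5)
Your derivation of (W) is the paper's: Lemma~\ref{lem:pointwise} with $K=2C_{1}$, then $\tau_{1}\le C\exp(-cm^{2})$, then Lemma~\ref{lem:graphmetric}. Your equivariance argument is also the paper's; it uses commutation of isometries with $\exp$ where the paper uses linearity of $\mathfrak g$ on $\bR^{4}$, which is the same thing. The gap is injectivity of $\Psi$, and neither argument you offer establishes it.

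Your alternative covering argument fails on two counts.
\begin{itemize}
\item Knowing that the normal graph of $tu$ is an embedding for small $t$ says nothing about $t=1$. Embeddedness is open but not closed along a homotopy of immersions, since two sheets can come into contact in the limit. Closedness would require a uniform separation estimate, which is exactly the quantitative check you were trying to avoid.
\item A homotopy through maps into $\bS^{3}$ does not compute the covering degree of $\Psi$ regarded as a map onto $\Sigma$. The intermediate maps do not take values in $\Sigma$.
\end{itemize}

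Your tubular sketch has the right orders of magnitude: $|u|\le C\tau_{1}$ on the toral regions and $|u|\le C\tau_{1}(m\tau_{i}\cosh t)^{\gamma}$ on the necks. It is not a proof, however. Comparing $|u(p)|$ with a local scale at $p$ does not exclude $\Psi(p)=\Psi(q)$ for distant $p,q$ without a global variable-width tubular-neighbourhood statement. Also, near the ends of a neck the relevant scale is the interlayer separation, of order $m^{2}\tau_{1}$, not $\tau_{i}\cosh t\approx R$. The conclusion survives there only because $|u|\le C\tau_{1}$.

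The paper closes this step by citation. It identifies $\Psi$ with the normal deformation $\iota[u]$ of \cite[(5.1)]{Wiygul}. The last paragraph of the proof of \cite[Theorem 6.50]{Wiygul} shows that $\iota[u]$ is an embedding for $m\ge m_{0}$, using the decay of $u$ toward the waists built into the decay norm. That is the comparison you sketch, carried out there.

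If you prefer to keep your covering route, it can be repaired with the genus statement of \cite[Theorem 1.1]{Wiygul}, which is a citation of the same kind:
\begin{itemize}
\item $\Psi:\Sigma^{0}\to\Sigma$ is a local diffeomorphism between closed connected surfaces, hence a finite covering of some degree $d$.
\item $\Sigma$ has the genus $k\ell m^{2}(N-1)+1$ of $\Sigma^{0}$.
\item Therefore $\chi(\Sigma^{0})=d\,\chi(\Sigma)$ with $\chi(\Sigma^{0})=\chi(\Sigma)=-2k\ell m^{2}(N-1)\ne0$, which forces $d=1$.
\end{itemize}
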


\begin{proof}
\emph{The estimate.} By Lemma \ref{lem:pointwise} with $K=2C_{1}$,
$\sup_{\Sigma^{0}}s\le Cm\tau_{1}^{\min\{\gamma,1\}}$. By \eqref{eq:tau} and the remark after
\eqref{eq:heights}, $\tau_{1}\le\exp(-cm^{2})$ with $c=c[N,k,\ell]>0$ for $m\ge m_{0}$, so
$5\sup s\le5Cm\exp(-c\min\{\gamma,1\}m^{2})\le m^{-2}$ after enlarging $m_{0}$. In particular $s\le1$
everywhere, and Lemma \ref{lem:graphmetric} gives \eqref{eq:W}.

\emph{Equivariance and bijectivity.} For $\mathfrak g\in\cG$ the function $u$ is invariant and, by
\cite[(4.4)]{Wiygul}, $\mathfrak g$ preserves each side of $\Sigma^{0}$, that is
$\mathfrak g_{*}\nu^{0}(p)=\nu^{0}(\mathfrak gp)$; as $\mathfrak g$ is linear on $\bR^{4}$,
$\Psi(\mathfrak gp)=\cos u(p)\,\mathfrak gp+\sin u(p)\,\mathfrak g\nu^{0}(p)=\mathfrak g\Psi(p)$.
By \eqref{eq:W}, $\Psi$ is an immersion of the closed surface $\Sigma^{0}$. The map $\Psi$ is the
normal deformation $\iota[u]$ of \cite[(5.1)]{Wiygul}, and the last paragraph of the proof of
\cite[Theorem 6.50]{Wiygul} shows that $\iota[u]$ is an embedding for $m\ge m_{0}$: the decay of $u$
toward the waists built into \eqref{eq:decaynorm} keeps $|u|$ below a small multiple of $\tau_{1}$ on
the catenoidal cores, hence below the normal injectivity radius of $\Sigma^{0}$ there. Thus $\Psi$ is
injective, and it is a diffeomorphism onto its image, the embedded surface $\Sigma$. Finally $\Sigma$
is closed, embedded, minimal and $\cG$-invariant by the same theorem.
\end{proof}

\begin{remark}\label{rem:W}
Whether the fixed point of \cite[Theorem 6.50]{Wiygul} is unique plays no role: Theorem A applies
to every surface with the property of Definition \ref{def:stacked}, and Corollary \ref{cor:wiygulW}
applies to every fixed point. The bound \eqref{eq:660} is far stronger than needed: the metric
distortion it produces is of order $m\tau_{1}^{\min\{\gamma,1\}}$, whereas \eqref{eq:W} only asks for
$m^{-2}$. Moreover Lemma \ref{lem:pointwise} uses of the weight in \eqref{eq:decaynorm} only the
positivity of the exponent $\gamma$, through the two-sided bound \eqref{eq:rhobounds}: for
$\gamma=0$ the argument would give no more than $s\le C\rho\tau_{1}$, which is of order one on the
part $|t|\lesssim\ln m$ of each tunnel. The decay of $u$ toward the waists built into
\eqref{eq:decaynorm} is what makes the sharp comparison \eqref{eq:W} available on all of $\Sigma$, and
we use it in that form throughout. (We expect the leading value $4$ of Theorem B to survive a bounded
distortion of the metric on a portion of each tunnel of conformal length $O(\ln m)$, since such a
portion would contribute only $O(m^{-2}\ln m)$ to the reciprocal of the conductance of a tunnel of
conformal length $2a_{i}\sim m^{2}$; we do not pursue this, as \eqref{eq:W} is available.)
\end{remark}

\subsection{Decomposition and comparison}
Let $\Sigma$ be a stacked Clifford torus of type $(N,k,\ell,m)$ with $\Psi$ as in
\eqref{eq:W}. We transport the decomposition of $\Sigma^{0}$ to $\Sigma$:
\begin{equation}\label{eq:decomp}
\Sigma=\bigcup_{j=1}^{N}\cT[j]\ \cup\ \bigcup_{i=1}^{N-1}\bigcup_{s=1}^{M}\cK_{i}^{(s)},
\qquad
\cT[j]:=\Psi(\cT^{0}[j]),\quad \cK_{i}^{(s)}:=\Psi\bigl(\mathfrak g_{s}\kappa_{i}(\bK_{a_{i}})\bigr),
\end{equation}
where $\mathfrak g_{1}=\mathrm{id},\mathfrak g_{2},\dots,\mathfrak g_{M}\in\cG$ carry the removal point
$(\rx_{i},\ry_{i})$ onto the $M$ points of $\mathsf L_{(i+1)\bmod2}$. The pieces have pairwise disjoint
interiors and are glued along the end circles $\Gamma_{i,\pm}^{(s)}:=\Psi(\mathfrak g_{s}\kappa_{i}(\{t=\pm a_{i}\}))$.
Each tunnel $\cK_{i}^{(s)}$ carries the coordinates $(t,\theta)\in\bK_{a_{i}}$ through
$\Psi\circ\mathfrak g_{s}\circ\kappa_{i}$, and we write $\cK_{i}^{(s),\pm}$ for its halves
$\{\pm t\ge0\}$. Each toral region $\cT[j]$ carries the chart $\pi_{j}\circ\Psi^{-1}:\cT[j]\to P_{j}$.
By Lemmas \ref{lem:initialcat}, \ref{lem:initialtor} and \eqref{eq:W}, for $m\ge m_{0}$ (so that
$Cm^{2}\tau_{1}\le m^{-2}$),
\begin{equation}\label{eq:comp}
\begin{aligned}
(1-\varepsilon_{m})\,\tau_{i}^{2}\cosh^{2}t\,\widehat\chi&\le g\le(1+\varepsilon_{m})\,\tau_{i}^{2}\cosh^{2}t\,\widehat\chi
&&\text{on }\cK_{i}^{(s)},\\
(1-\varepsilon_{m})\,(d\rx^{2}+d\ry^{2})&\le g\le(1+\varepsilon_{m})\,(d\rx^{2}+d\ry^{2})
&&\text{on }\cT[j],
\end{aligned}
\qquad \varepsilon_{m}:=3m^{-2}.
\end{equation}
All these identifications are $\cG$-equivariant, $\cG$ preserving each $\cT[j]$ and permuting the
tunnels of each layer among themselves.

\begin{convention}\label{conv:avg}
For an end circle $\Gamma=\Gamma_{i,\pm}^{(s)}$ and a function $u$ on $\Sigma$ we write
\[
\avg_{\Gamma}u:=\frac{1}{2\pi}\int_{0}^{2\pi}u(\pm a_{i},\theta)\dd\theta
\]
for the mean with respect to the coordinate $\theta$; by Lemma \ref{lem:initialtor} this is also the
mean over the circle $\partial B(p,R)$ of $P_{j}$ with respect to the polar angle. For a region
$\Omega\subset\Sigma$ we write $\avg_{\Omega}u:=|\Omega|^{-1}\int_{\Omega}u\dd A$ with respect to the
area of $\Sigma$.
\end{convention}

\begin{lemma}[Transfer]\label{lem:transfer}
Let $h$ and $h'$ be Riemannian metrics on a surface $S$ with $(1-\varepsilon)h\le h'\le(1+\varepsilon)h$,
$0<\varepsilon\le\tfrac12$. Then for every $f\in H^{1}(S)$
\begin{gather*}
\frac{1-\varepsilon}{1+\varepsilon}\int_{S}|\nabla f|_{h}^{2}\dd A_{h}\le\int_{S}|\nabla f|_{h'}^{2}\dd A_{h'}
\le\frac{1+\varepsilon}{1-\varepsilon}\int_{S}|\nabla f|_{h}^{2}\dd A_{h},\\
(1-\varepsilon)\dd A_{h}\le\dd A_{h'}\le(1+\varepsilon)\dd A_{h},
\end{gather*}
and in dimension two the Dirichlet integral $\int|\nabla f|_{h}^{2}dA_{h}$ is invariant under
conformal changes of $h$. In particular all comparisons in \eqref{eq:comp} affect Dirichlet
integrals, $L^{2}$ norms and areas by factors $1+O(m^{-2})$.
\end{lemma}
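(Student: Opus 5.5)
The plan is pointwise linear algebra followed by integration. Fix a point $p\in S$ and choose a basis of $T_{p}S$ that is $h$-orthonormal and diagonalises $h'$. In this basis $h'=\operatorname{diag}(\mu_{1},\mu_{2})$, and the hypothesis gives $1-\varepsilon\le\mu_{i}\le1+\varepsilon$.

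\emph{Area elements.} We have $dA_{h'}/dA_{h}=\sqrt{\mu_{1}\mu_{2}}$, which lies in $[1-\varepsilon,1+\varepsilon]$ because each $\mu_{i}$ does. This is the second line of the statement.

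\emph{Gradients.} For a covector $df=(f_{1},f_{2})$ in this basis,
\[
|\nabla f|_{h'}^{2}=\sum_{i}\mu_{i}^{-1}f_{i}^{2},
\qquad\text{so}\qquad
(1+\varepsilon)^{-1}|\nabla f|_{h}^{2}\le|\nabla f|_{h'}^{2}\le(1-\varepsilon)^{-1}|\nabla f|_{h}^{2}.
\]
Multiplying by the area comparison gives, pointwise,
\[
\frac{1-\varepsilon}{1+\varepsilon}\,|\nabla f|_{h}^{2}\,dA_{h}\ \le\ |\nabla f|_{h'}^{2}\,dA_{h'}\ \le\ \frac{1+\varepsilon}{1-\varepsilon}\,|\nabla f|_{h}^{2}\,dA_{h}.
\]
Integrating over $S$ gives the first line. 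This is first valid for smooth $f$. For $f\in H^{1}(S)$ it follows by density, since both Dirichlet integrals are continuous on $H^{1}$ and the $H^{1}$ spaces of $h$ and $h'$ coincide by the same comparison.

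\emph{Conformal invariance.} If $\tilde h=e^{2w}h$ in dimension two, then $|\nabla f|_{\tilde h}^{2}=e^{-2w}|\nabla f|_{h}^{2}$ and $dA_{\tilde h}=e^{2w}dA_{h}$. The factors cancel, so the Dirichlet integral is unchanged.

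\emph{Consequence for \eqref{eq:comp}.} On a tunnel $\cK_{i}^{(s)}$, apply the comparison with $h=\tau_{i}^{2}\cosh^{2}t\,\widehat\chi$, $h'=g$ and $\varepsilon=\varepsilon_{m}=3m^{-2}\le\tfrac12$ for $m\ge m_{0}$. On a toral region $\cT[j]$, use $h=d\rx^{2}+d\ry^{2}$. The factors $(1\pm\varepsilon_{m})^{\pm1}$ are $1+O(m^{-2})$. For $L^{2}$ norms, note that $\int f^{2}\,dA$ is governed by the area element alone. On the tunnels, conformal invariance lets us replace $h$ by the flat metric $\widehat\chi$ for Dirichlet integrals, though not for areas.

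There is no real obstacle. The only point needing care is using the simultaneous diagonalisation, rather than crude norm bounds, to obtain the stated constants exactly.
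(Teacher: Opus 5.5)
Your proposal is correct and follows essentially the paper's own argument. The paper gets the same pointwise comparisons from $h'\le(1+\varepsilon)h\Rightarrow h'^{-1}\ge(1+\varepsilon)^{-1}h^{-1}$ on covectors and $dA_{h'}\ge(1-\varepsilon)\,dA_{h}$, which your simultaneous diagonalisation makes explicit, and it verifies conformal invariance by the same cancellation of the conformal factor.
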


\begin{proof}
If $h'\le(1+\varepsilon)h$ then $h'^{-1}\ge(1+\varepsilon)^{-1}h^{-1}$ on covectors, so
$|\nabla f|^{2}_{h'}\ge(1+\varepsilon)^{-1}|\nabla f|^{2}_{h}$, while $dA_{h'}\ge(1-\varepsilon)dA_{h}$
from $h'\ge(1-\varepsilon)h$; the other inequalities are symmetric. Conformal invariance in
dimension two is the identity $|\nabla f|^{2}_{\rho^{2}h}dA_{\rho^{2}h}=|\nabla f|^{2}_{h}dA_{h}$.
\end{proof}

\section{Balancing, the line graph and the spectral gap}\label{sec:balancing}

Let $P_{N}$ be the path graph with vertices $1,\dots,N$ and edges $e_{i}=\{i,i+1\}$, $1\le i\le N-1$;
let $A(P_{N})$ be its adjacency matrix and $L(P_{N})=\operatorname{diag}(\deg)-A(P_{N})$ its
combinatorial Laplacian, whose quadratic form is $c\mapsto\sum_{i=1}^{N-1}(c_{i}-c_{i+1})^{2}$. The
line graph of $P_{N}$, whose vertices are the edges $e_{1},\dots,e_{N-1}$ with $e_{i}$ adjacent to
$e_{i+1}$, is $P_{N-1}$. Let $D$ be the $N\times(N-1)$ incidence matrix with the consistent
orientation, $D_{v,e_{i}}=-1$ if $v=i$, $D_{v,e_{i}}=+1$ if $v=i+1$, and $D_{v,e_{i}}=0$ otherwise.

\begin{lemma}[Incidence identity]\label{lem:incidence}
$L(P_{N})=DD^{\mathsf T}$ and $D^{\mathsf T}D=2I_{N-1}-A(P_{N-1})$. The matrix $D$ has rank $N-1$,
the matrix $D^{\mathsf T}D$ is positive definite, and its spectrum, with multiplicities, is the
spectrum of $L(P_{N})$ with the eigenvalue $0$ removed. Consequently
\[
\lambda_{1}\bigl(L(P_{N})\bigr)=2-\lambda_{\max}\bigl(A(P_{N-1})\bigr),\qquad
\lambda_{\max}\bigl(L(P_{N})\bigr)=2-\lambda_{\min}\bigl(A(P_{N-1})\bigr).
\]
\end{lemma}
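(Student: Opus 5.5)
The plan is to compute both products directly from the definition of $D$, and then derive the spectral statements from the general fact that $DD^{\mathsf T}$ and $D^{\mathsf T}D$ share their nonzero spectrum with multiplicities.

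\emph{Step 1: $DD^{\mathsf T}$.} For vertices $v,w$ we have $(DD^{\mathsf T})_{vw}=\sum_{i}D_{v,e_{i}}D_{w,e_{i}}$.
\begin{itemize}
\item On the diagonal, each edge at $v$ contributes $(\pm1)^{2}=1$, so the entry is $\deg v$.
\item Off the diagonal, only the edge $e_{i}=\{i,i+1\}$ joining $v=i$ and $w=i+1$ contributes, namely $(-1)(+1)=-1$.
\end{itemize}
Hence $DD^{\mathsf T}=\operatorname{diag}(\deg)-A(P_{N})=L(P_{N})$. Equivalently, $|D^{\mathsf T}c|^{2}=\sum_{i}(c_{i+1}-c_{i})^{2}$ is the quadratic form of $L(P_{N})$.

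\emph{Step 2: $D^{\mathsf T}D$.} For edges $e_{i},e_{j}$ we have $(D^{\mathsf T}D)_{ij}=\sum_{v}D_{v,e_{i}}D_{v,e_{j}}$.
\begin{itemize}
\item On the diagonal, the two endpoints of $e_{i}$ contribute $1+1=2$.
\item For $|i-j|=1$, say $j=i+1$, the only shared vertex is $i+1$. It enters $e_{i}$ with sign $+1$ and $e_{i+1}$ with sign $-1$, giving $-1$.
\item For $|i-j|\ge2$ there is no shared vertex, so the entry is $0$.
\end{itemize}
Under $e_{i}\mapsto i$ this matrix is $2I_{N-1}-A(P_{N-1})$.

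\emph{Step 3: rank and spectra.} The kernel of $D^{\mathsf T}$ consists of the $c$ with $c_{i+1}=c_{i}$ for all $i$, that is, the constants, since $P_{N}$ is connected. So $D^{\mathsf T}$ has rank $N-1$, and therefore $D$ has rank $N-1$. As $D$ has $N-1$ columns, $D$ is injective. Hence $D^{\mathsf T}D$ is positive definite, because $x^{\mathsf T}D^{\mathsf T}Dx=|Dx|^{2}>0$ for $x\ne0$.

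For any real matrix, $DD^{\mathsf T}$ and $D^{\mathsf T}D$ have the same nonzero eigenvalues with multiplicities. Indeed, $x\mapsto Dx$ maps the $\mu$-eigenspace of $D^{\mathsf T}D$ injectively into the $\mu$-eigenspace of $DD^{\mathsf T}$ when $\mu\ne0$, and $D^{\mathsf T}$ gives the inverse inclusion. The kernel of $L(P_{N})$ is one-dimensional, by Step 3. So the spectrum of $D^{\mathsf T}D$, which has $N-1$ eigenvalues all positive, is exactly the spectrum of $L(P_{N})$ with one $0$ removed.

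Combining this with Step 2, the nonzero spectrum of $L(P_{N})$ equals $2-\operatorname{spec}A(P_{N-1})$. Reading off the smallest and largest elements gives the two formulas:
\[
\lambda_{1}\bigl(L(P_{N})\bigr)=2-\lambda_{\max}\bigl(A(P_{N-1})\bigr),\qquad
\lambda_{\max}\bigl(L(P_{N})\bigr)=2-\lambda_{\min}\bigl(A(P_{N-1})\bigr).
\]

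There is no real obstacle here. The only point needing care is the sign bookkeeping in Step 2: the consistent orientation makes the shared vertex carry opposite signs in adjacent edges, which is what produces $-A$ rather than $+A$. One also needs the degenerate case $N=2$, where $A(P_{1})=0$ and the formula gives $\lambda_{1}=2$, consistent with $L(P_{2})$.
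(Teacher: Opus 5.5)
Your proposal is correct and follows essentially the same route as the paper. You compute both Gram products entrywise with the same sign bookkeeping at the shared vertex, identify $\ker D^{\mathsf T}$ with the constants to get rank $N-1$ and positive definiteness, and transfer the nonzero spectrum between $DD^{\mathsf T}$ and $D^{\mathsf T}D$. The only difference is that you justify the shared nonzero spectrum by the intertwining maps $D$ and $D^{\mathsf T}$ between eigenspaces, where the paper cites singular values; this is an immaterial variation.
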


\begin{proof}
$(DD^{\mathsf T})_{vw}=\sum_{e}D_{ve}D_{we}$ equals the number of edges at $v$ if $v=w$, equals
$(-1)(+1)=-1$ if $\{v,w\}$ is an edge, and vanishes otherwise; this is $L(P_{N})$.
$(D^{\mathsf T}D)_{ef}=\sum_{v}D_{ve}D_{vf}$ equals $2$ if $e=f$; if $e=e_{i}$ and $f=e_{i+1}$ the only
common vertex is $i+1$, which is the head of $e_{i}$ and the tail of $e_{i+1}$, so the entry is
$(+1)(-1)=-1$; and it vanishes if $e$ and $f$ are not adjacent. Thus $D^{\mathsf T}D=2I-A(P_{N-1})$.
Next, $D^{\mathsf T}c=0$ means $c_{i+1}=c_{i}$ for all $i$, so $\ker D^{\mathsf T}$ consists of the
constant vectors and $\operatorname{rank}D=N-1$; hence $\ker(DD^{\mathsf T})=\ker D^{\mathsf T}$ is
one-dimensional and $D^{\mathsf T}D$ is positive definite. For any real matrix the nonzero eigenvalues
of $DD^{\mathsf T}$ and $D^{\mathsf T}D$ coincide with multiplicities (they are the squares of the
nonzero singular values of $D$); as $DD^{\mathsf T}$ has exactly $N-1$ nonzero eigenvalues counted
with multiplicity, these are all the eigenvalues of the $(N-1)\times(N-1)$ matrix $D^{\mathsf T}D$.
The last two identities follow.
\end{proof}

\begin{lemma}[Spectra of paths]\label{lem:pathspec}
\begin{enumerate}
\item[(i)] For $M\ge1$ the eigenvalues of $A(P_{M})$ are $2\cos\bigl(j\pi/(M+1)\bigr)$,
$j=1,\dots,M$, with eigenvectors $v^{(j)}=\bigl(\sin(ij\pi/(M+1))\bigr)_{i=1}^{M}$; for $j=1$ the
eigenvector is positive.
\item[(ii)] The eigenvalues of $L(P_{N})$ are $2\bigl(1-\cos(j\pi/N)\bigr)$, $j=0,\dots,N-1$, with
eigenvectors $c^{(j)}=\bigl(\cos\tfrac{(2r-1)j\pi}{2N}\bigr)_{r=1}^{N}$. In particular
$\lambda_{1}(L(P_{N}))=2(1-\cos(\pi/N))$ is attained at $c^{(1)}$, which satisfies
$c^{(1)}_{N+1-r}=-c^{(1)}_{r}$ and $\sum_{r}c^{(1)}_{r}=0$.
\end{enumerate}
\end{lemma}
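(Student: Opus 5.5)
For (i) I verify the eigenvector equation directly. Write $\omega:=\pi/(M+1)$. For $1\le i\le M$, the sum-to-product formula gives $\sin((i-1)j\omega)+\sin((i+1)j\omega)=2\cos(j\omega)\sin(ij\omega)$. At the ends, $v^{(j)}_{0}:=\sin(0)=0$ and $v^{(j)}_{M+1}:=\sin(j\pi)=0$, so the boundary rows of $A(P_{M})$, which have only one neighbour, are covered by the same identity. Hence $A(P_{M})v^{(j)}=2\cos(j\omega)\,v^{(j)}$.

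The values $2\cos(j\omega)$, $j=1,\dots,M$, are pairwise distinct because $\cos$ is injective on $(0,\pi)$. They therefore give $M$ distinct eigenvalues of an $M\times M$ matrix, which is the whole spectrum. For $j=1$ the angles $i\omega$ lie in $(0,\pi)$, so every entry of $v^{(1)}$ is positive.

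For (ii) I verify $L(P_{N})c^{(j)}=2(1-\cos(j\pi/N))c^{(j)}$ in the same way. Write $\phi:=j\pi/N$, so that $c^{(j)}_{r}=\cos((2r-1)\phi/2)$. In the interior rows $2\le r\le N-1$ the Laplacian acts by $2c_{r}-c_{r-1}-c_{r+1}$. Since $c_{r-1}+c_{r+1}=2\cos\phi\,c_{r}$, this equals $2(1-\cos\phi)c_{r}$.

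For the boundary row $r=1$ the Laplacian acts by $c_{1}-c_{2}$. Extending by $c_{0}:=\cos(-\phi/2)=c_{1}$, the extended vector satisfies the interior identity at $r=1$, and $2c_{1}-c_{0}-c_{2}=c_{1}-c_{2}$, as required. The row $r=N$ is handled likewise, using $c_{N+1}=\cos((2N+1)\phi/2)=\cos(j\pi+\phi/2)$ and $c_{N}=\cos(j\pi-\phi/2)$; these are equal by evenness of $\cos$ about $j\pi$.

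The $N$ eigenvalues $2(1-\cos(j\pi/N))$, $j=0,\dots,N-1$, are distinct, so again they form the complete spectrum. The smallest nonzero one is at $j=1$.

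For the symmetry, $c^{(1)}_{N+1-r}=\cos\bigl((2N+1-2r)\pi/(2N)\bigr)=\cos\bigl(\pi-(2r-1)\pi/(2N)\bigr)=-c^{(1)}_{r}$. Summing over $r$ and pairing $r$ with $N+1-r$ gives $\sum_{r}c^{(1)}_{r}=0$; when $N$ is odd the middle term $r=(N+1)/2$ is $\cos(\pi/2)=0$. Alternatively, the zero sum follows from orthogonality of $c^{(1)}$ to the constant eigenvector $c^{(0)}$, since $L(P_{N})$ is symmetric and the eigenvalues differ.

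There is no real obstacle here. The only care needed is the boundary rows, which the extensions $v_{0}=v_{M+1}=0$ (Dirichlet) for (i) and $c_{0}=c_{1}$, $c_{N+1}=c_{N}$ (Neumann) for (ii) handle uniformly.
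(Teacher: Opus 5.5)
Your proposal is correct and follows essentially the same route as the paper: direct verification of the eigenvector equations via the sum-to-product identities, with the boundary rows handled by the Dirichlet extension $v_{0}=v_{M+1}=0$ in (i) and the Neumann extension $c_{0}=c_{1}$, $c_{N+1}=c_{N}$ in (ii). As in the paper, distinctness of the $M$ (respectively $N$) eigenvalues gives the full spectrum, and the antisymmetry $c^{(1)}_{N+1-r}=-c^{(1)}_{r}$ gives the zero sum; your orthogonality argument is a harmless alternative for that last point.
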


\begin{proof}
(i) With $x_{i}:=ij\pi/(M+1)$ and $y:=j\pi/(M+1)$, the identity $\sin(x_{i}-y)+\sin(x_{i}+y)=2\sin x_{i}\cos y$
gives $(Av^{(j)})_{i}=v^{(j)}_{i-1}+v^{(j)}_{i+1}=2\cos y\,v^{(j)}_{i}$ for $1<i<M$, and also for
$i=1$ and $i=M$ because $\sin(0)=0=\sin(j\pi)$. The $M$ numbers $2\cos(j\pi/(M+1))$ are distinct, so
these are all the eigenvalues, and $v^{(1)}_{i}=\sin(i\pi/(M+1))>0$.

(ii) With $x_{r}:=(2r-1)j\pi/(2N)$ and $y:=j\pi/N$, the identity $\cos(x_{r}-y)+\cos(x_{r}+y)=2\cos x_{r}\cos y$
gives $-c^{(j)}_{r-1}+2c^{(j)}_{r}-c^{(j)}_{r+1}=2(1-\cos y)c^{(j)}_{r}$ for $1<r<N$. At $r=1$ the
row of $L(P_{N})$ is $(1,-1,0,\dots)$ and the same identity applies because
$c^{(j)}_{0}=\cos(-j\pi/2N)=c^{(j)}_{1}$; at $r=N$ it applies because
$c^{(j)}_{N+1}=\cos(j\pi+j\pi/2N)=(-1)^{j}\cos(j\pi/2N)=c^{(j)}_{N}$. The $N$ eigenvalues
$2(1-\cos(j\pi/N))$, $0\le j\le N-1$, are distinct, so the $c^{(j)}$ form an eigenbasis. Finally
$c^{(1)}_{N+1-r}=\cos\bigl(\pi-\tfrac{(2r-1)\pi}{2N}\bigr)=-c^{(1)}_{r}$, whence $\sum_{r}c^{(1)}_{r}=0$.
\end{proof}

\begin{lemma}[Perron structure of the limiting system]\label{lem:perron}
Let $N\ge2$ and let $b=(b_{1},\dots,b_{N-1})$ be given by $b_{1}=1$, by a solution
$(b_{2},\dots,b_{n})$ of the limiting system with all $b_{i}>0$ when $N\ge4$, by the conventions
$b_{2}=0$ $(N=2)$, $b_{2}=1$ $(N=3)$, and by $b_{i}=b_{N-i}$ for $n<i\le N-1$. Then $b$ is a positive
vector and
\[
A(P_{N-1})\,b=b_{2}\,b .
\]
\end{lemma}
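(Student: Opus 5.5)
The plan is a direct row-by-row verification that the extended vector $b$ satisfies $(A(P_{N-1})b)_{i}=b_{i-1}+b_{i+1}=b_{2}b_{i}$ for $1\le i\le N-1$, with the convention $b_{0}=b_{N}=0$. Positivity is immediate from the hypotheses: $b_{1}=1$, and the remaining entries are either assumed positive for $N\ge4$ or equal to $1$. We then argue as follows.

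\emph{Small cases.} For $N=2$ the matrix is $A(P_{1})=(0)$ and $b=(1)$, so $Ab=0=b_{2}b$. For $N=3$ we have $b=(1,1)$ and $A(P_{2})b=(1,1)=1\cdot b$, as required by $b_{2}=1$.

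\emph{General case, $N\ge4$, $n=\lfloor N/2\rfloor$.} We check the rows in four groups.
\begin{enumerate}
\item[(a)] Row $i=1$ reads $b_{0}+b_{2}=b_{2}=b_{2}b_{1}$. This holds trivially because $b_{1}=1$, and it is exactly why the constant $b_{2}$ can serve as the eigenvalue.
\item[(b)] Rows $2\le i\le n-1$ are the first line of the limiting system \eqref{eq:system} with right-hand side $0$, namely $-b_{i-1}+b_{2}b_{i}-b_{i+1}=0$.
\item[(c)] Row $i=n$ uses the last line of the limiting system, and we split by parity.
\begin{itemize}
\item If $N=2n$ is even, the coefficient is $2^{1}=2$ and $N\bmod2=0$, so the line reads $-2b_{n-1}+b_{2}b_{n}=0$. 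By the reflection convention $b_{n+1}=b_{N-n-1}=b_{n-1}$, so $b_{n-1}+b_{n+1}=2b_{n-1}=b_{2}b_{n}$, which is the row.
\item If $N=2n+1$ is odd, the line reads $-b_{n-1}+(b_{2}-1)b_{n}=0$. Here $b_{n+1}=b_{N-n-1}=b_{n}$, so the row $b_{n-1}+b_{n+1}=b_{2}b_{n}$ is again exactly the given equation.
\end{itemize}
\item[(d)] Rows $n<i\le N-1$ follow from the symmetry $b_{i}=b_{N-i}$. Setting $i'=N-i$, row $i$ becomes row $i'$ with $b_{i'+1}$ and $b_{i'-1}$ interchanged. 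The index $i'$ lies in $\{1,\dots,N-n-1\}$, and $N-n-1\le n$. The boundary term $b_{N}=0$ corresponds to $b_{0}=0$, so these rows reduce to rows already verified in (a)--(c).
\end{enumerate}

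The only real obstacle is bookkeeping at the middle index $n$ and near the ends. One has to check that the reflection convention is consistent with the definitions of the entries up to $n$; in the odd case these entries overlap at the pair $b_{n}=b_{n+1}$, which is exactly what the term $b_{2}-1$ encodes. One must also handle $n=2$, where the first line of the system is empty and $b_{n}=b_{2}$ serves both as an entry and as the eigenvalue. For $n=2$ the computations above still apply verbatim, for instance at $N=4$ one gets $-2+b_{2}^{2}=0$. We would check $N=4$ and $N=5$ explicitly as sanity cases.
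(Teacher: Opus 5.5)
Your proposal is correct and follows the paper's proof essentially line by line. It uses the same convention $b_{0}=b_{N}=0$ and the same row-by-row verification: row $1$ holds trivially, rows $2\le i\le n-1$ are the first line of the limiting system, row $n$ comes from the second line split by parity, and rows $n<i\le N-1$ reduce to earlier rows via $b_{i}=b_{N-i}$, with positivity read off directly from the hypotheses.
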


\begin{proof}
Set $b_{0}:=b_{N}:=0$, so that $(A(P_{N-1})b)_{i}=b_{i-1}+b_{i+1}$ for $1\le i\le N-1$. For $N=2$ the
graph $P_{1}$ has a single vertex, $A(P_{1})=0$, $b=(1)$ and $b_{2}=0$. For $N=3$, $b=(1,1)$ and
$A(P_{2})b=(1,1)=b_{2}b$. Let $N\ge4$, so $n\ge2$. For $i=1$: $b_{0}+b_{2}=b_{2}=b_{2}b_{1}$. For
$2\le i\le n-1$: $b_{i-1}+b_{i+1}=b_{2}b_{i}$ is the first line of the limiting system. For $i=n$:
if $N=2n$ then $b_{n+1}=b_{N-n-1}=b_{n-1}$ and the second line reads $2b_{n-1}=b_{2}b_{n}$; if
$N=2n+1$ then $b_{n+1}=b_{N-n-1}=b_{n}$ and the second line reads $b_{n-1}+b_{n}=b_{2}b_{n}$. For
$n<i\le N-1$ we have $1\le N-i\le n$ and, by the symmetry $b_{i}=b_{N-i}$,
$b_{i-1}+b_{i+1}=b_{N-i+1}+b_{N-i-1}=b_{2}b_{N-i}=b_{2}b_{i}$ by the cases already treated.
Positivity holds because $b_{1}=1$ and $b_{i}>0$ for $2\le i\le n$.
\end{proof}

\begin{proposition}[Closed form of the balancing constants]\label{prop:b2}
Let $N\ge2$ and let $d_{i}[N]$ be the limiting waist ratios of {\rm(B1)}, so that $b_{i}[N]=d_{i}[N]$ in the notation of Theorem B, with $d_{2}[2]=0$ and
$d_{2}[3]=1$, extended by $d_{1}=1$ and $d_{i}=d_{N-i}$. Then
\begin{gather*}
d_{i}[N]=\frac{\sin(i\pi/N)}{\sin(\pi/N)}\quad(1\le i\le N-1),\\
b_{2}[N]:=d_{2}[N]=2\cos\frac{\pi}{N}=\lambda_{\max}\bigl(A(P_{N-1})\bigr),\qquad
\lambda_{1}\bigl(L(P_{N})\bigr)=2-b_{2}[N].
\end{gather*}
In particular $b_{2}[4]=\sqrt2$ and $b_{2}[5]=\tfrac{1+\sqrt5}{2}$.
\end{proposition}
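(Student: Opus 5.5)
The plan is to combine Lemma \ref{lem:perron} with the Perron--Frobenius theorem for the irreducible nonnegative matrix $A(P_{N-1})$, and then read off the explicit eigenvector from Lemma \ref{lem:pathspec}(i).

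First I treat the small cases directly. For $N=2$, the vector is $d=(1)$ and $d_{2}=0=2\cos(\pi/2)$. For $N=3$, the vector is $d=(1,1)$, $d_{2}=1=2\cos(\pi/3)$, and $\sin(2\pi/3)/\sin(\pi/3)=1$. Both agree with the claim.

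For $N\ge4$, (B1) gives a solution $d$ of the limiting system with $d_{2}\in(1,2)$ and $d_{i}$ increasing. In particular every $d_{i}>0$, so Lemma \ref{lem:perron} applies: the extended vector $d=(d_{1},\dots,d_{N-1})$ is positive and satisfies $A(P_{N-1})d=d_{2}\,d$.

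Next I identify $d_{2}$ and $d$. Since $P_{N-1}$ is connected, $A(P_{N-1})$ is a symmetric irreducible nonnegative matrix. A positive eigenvector of such a matrix must belong to the largest eigenvalue, because eigenvectors of a symmetric matrix for distinct eigenvalues are orthogonal. Concretely, by Lemma \ref{lem:pathspec}(i) the vector $v^{(1)}$ is positive, so $\langle d,v^{(1)}\rangle>0$, and this forces $d_{2}=2\cos(\pi/N)$. That eigenvalue is simple, since the eigenvalues listed in Lemma \ref{lem:pathspec}(i) are distinct. Hence $d$ is a multiple of $v^{(1)}=(\sin(i\pi/N))_{i}$. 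The normalisation $d_{1}=1$ then gives $d_{i}=\sin(i\pi/N)/\sin(\pi/N)$. As a consistency check, $d_{2}=\sin(2\pi/N)/\sin(\pi/N)=2\cos(\pi/N)$, matching the eigenvalue.

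Finally, Lemma \ref{lem:incidence} gives $\lambda_{1}(L(P_{N}))=2-\lambda_{\max}(A(P_{N-1}))=2-b_{2}[N]$. The special values follow by evaluation: $b_{2}[4]=2\cos(\pi/4)=\sqrt2$ and $b_{2}[5]=2\cos(\pi/5)=(1+\sqrt5)/2$.

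The only delicate point is the positivity of the solution $d$, which is what allows the Perron argument. I take it from (B1). Alternatively, the increasing property together with $d_{2}>1=d_{1}$ also yields positivity. No uniqueness of solutions to the limiting system is needed, because any positive solution is forced to equal $v^{(1)}/\sin(\pi/N)$.
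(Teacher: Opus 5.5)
Your proposal is correct and follows essentially the paper's proof: you check the small cases by hand, use Lemma~\ref{lem:perron} to get a positive eigenvector of $A(P_{N-1})$ with eigenvalue $d_{2}$, identify it with the top eigenvector $(\sin(i\pi/N))_{i}$ of Lemma~\ref{lem:pathspec}(i), and take the gap identity from Lemma~\ref{lem:incidence}. The one difference is that the paper cites the Perron--Frobenius uniqueness of positive eigenvectors, whereas you use the more elementary fact that $d$ cannot be orthogonal to the explicit positive eigenvector $v^{(1)}$ of the symmetric matrix $A(P_{N-1})$, together with simplicity of its eigenvalues; this works equally well here, and you rightly make explicit that the positivity of $d$ comes from (B1).
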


\begin{proof}
For $N=2$ and $N=3$ the statements are immediate from $A(P_{1})=0$ and $\operatorname{spec}A(P_{2})=\{\pm1\}$.
Let $N\ge4$. By Lemma \ref{lem:perron} the vector $d=(d_{1},\dots,d_{N-1})$ is a positive eigenvector
of $A(P_{N-1})$ with eigenvalue $d_{2}$. The matrix $A(P_{N-1})$ is nonnegative and irreducible,
$P_{N-1}$ being connected, so by the Perron--Frobenius theorem it has, up to positive multiples, a
unique positive eigenvector, whose eigenvalue is the spectral radius $\lambda_{\max}(A(P_{N-1}))$.
Hence $d_{2}=\lambda_{\max}(A(P_{N-1}))=2\cos(\pi/N)$ by Lemma \ref{lem:pathspec}(i) with $M=N-1$, and
$d$ is the positive multiple of $\bigl(\sin(i\pi/N)\bigr)_{i}$ with first entry $1$. The identity for
$\lambda_{1}(L(P_{N}))$ is Lemma \ref{lem:incidence}. The special values are
$2\cos(\pi/4)=\sqrt2$ and $2\cos(\pi/5)=(1+\sqrt5)/2$.
\end{proof}

\begin{corollary}\label{cor:rate}
For $m\ge m_{0}[N]$ the constants of \eqref{eq:system} satisfy
\[
|b_{i}[N,k,\ell,m]-d_{i}[N]|\le\frac{C}{m^{2}}\quad(1\le i\le N-1),\qquad
2-b_{2}[N,k,\ell,m]\ \ge\ 1-\cos\frac{\pi}{N}\ >0,
\]
and all $b_{i}$ lie in a compact subset of $(0,\infty)$ depending only on $N$.
\end{corollary}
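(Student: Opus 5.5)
Write $\varepsilon:=8\pi/(k\ell m^{2})$, so that \eqref{eq:system} reads $F_{N}(b)=e_{1}+\varepsilon\,G(b)$ with $G_{i}(b):=-b_{i}\ln b_{i}$. The plan is to treat $b$ as a small perturbation of the limiting solution $d$ of $F_{N}(x)=e_{1}$ and to read off the $O(\varepsilon)=O(m^{-2})$ rate from a Lipschitz bound for a local inverse of $F_{N}$. Once $|b_{i}-d_{i}|\le Cm^{-2}$ is known, the other two claims follow by elementary means.

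For $N\ge6$, (B1) states that $dF_{N}(d)$ is invertible, and (B2) states that $b$ is the inverse-function-theorem solution near $d$. The first step is to fix a neighbourhood $U$ of $d$, depending only on $N$, on which $F_{N}$ is a diffeomorphism onto a neighbourhood of $e_{1}$ and $F_{N}^{-1}$ is Lipschitz with constant $L[N]$. After enlarging $m_{0}$, we may assume $b\in U$ and all $b_{i}\in[\tfrac12\min d_{i},2\max d_{i}]$; this uses $b\to d$ from (B2). On this range $|G(b)|\le C[N]$, so
\[
|b-d|=\bigl|F_{N}^{-1}(e_{1}+\varepsilon G(b))-F_{N}^{-1}(e_{1})\bigr|\le L\varepsilon|G(b)|\le C m^{-2}.
\]
The case $N=4$ is a single scalar equation, $2=b_{2}^{2}+\varepsilon b_{2}\ln b_{2}$ (noting $b_{1}=1$, so $\ln b_{1}=0$). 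For $N=5$ it is $1+b_{2}=b_{2}^{2}-\varepsilon b_{2}\ln b_{2}$. In both cases the derivative with respect to $b_{2}$ at $d_{2}$ is nonzero: it is $2\sqrt2$ for $N=4$ and $2d_{2}-1=\sqrt5$ for $N=5$. The same implicit-function argument therefore applies, and the bound holds for $b_{2}$ and hence for all $b_{i}=b_{N-i}$. For $N=2,3$ one has $b_{i}=d_{i}$ exactly. The bound extends to the indices $i>n$ through the symmetry $b_{i}=b_{N-i}$.

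Next, by Proposition \ref{prop:b2}, $d_{2}=2\cos(\pi/N)$, so $2-d_{2}=2(1-\cos(\pi/N))$. Hence
\[
2-b_{2}\ge 2(1-\cos\tfrac{\pi}{N})-Cm^{-2}\ge 1-\cos\tfrac{\pi}{N}
\]
once $m\ge m_{0}[N]$ is large enough. Compactness of the $b_{i}$ follows from the explicit values $d_{i}=\sin(i\pi/N)/\sin(\pi/N)>0$ together with the $O(m^{-2})$ bound.

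I expect the main obstacle to be the uniformity underlying the first step. The Lipschitz constant and the neighbourhood must depend only on $N$, and not on $k,\ell$. This holds because $F_{N}$ and $d$ depend only on $N$, and only $\varepsilon$ carries $k,\ell,m$. The one point needing care is the a priori confinement $b\in U$ before the estimate can be run, which comes from the convergence statement in (B2). Accordingly $m_{0}$ is allowed to depend on $N,k,\ell$, while the constant $C$ depends only on $N$ since $\varepsilon\le 8\pi/m^{2}$.
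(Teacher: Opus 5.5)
Your argument is correct and is essentially the paper's: exact values for $N\le3$, a scalar implicit-function argument with nonzero derivatives $2\sqrt2$ and $\sqrt5$ for $N=4,5$, and for $N\ge6$ the invertibility of $dF_{N}(d)$ from (B1) converted into $|b-d|=O(\varepsilon)$, after which the bound on $2-b_{2}$ and the compactness claim are immediate. Your Lipschitz bound for the local inverse of $F_{N}$ is equivalent to the paper's implicit function theorem for $F+\varepsilon G$ with uniqueness of the branch.

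Two small corrections. First, the $N=5$ equation should read $1+b_{2}=b_{2}^{2}+\varepsilon b_{2}\ln b_{2}$; the sign slip is harmless for the rate. Second, the statement asserts the threshold $m_{0}[N]$, not the $m_{0}[N,k,\ell]$ you concede, so you should place $b$ in $U$ using the first clause of (B2) rather than the bare convergence $b\to d$. That clause says that for $m\ge m_{0}[N]$, $b$ is the inverse-function-theorem solution near $d$, and its distance to $d$ is controlled by $\varepsilon\le8\pi/m^{2}$ alone.
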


\begin{proof}
For $N\in\{2,3\}$ the constants are exact. For $N=4$ and $N=5$ we have $n=2$ and \eqref{eq:system} reduces to the single scalar equation $b_{2}^{2}-2=-\varepsilon b_{2}\ln b_{2}$,
respectively $b_{2}^{2}-b_{2}-1=-\varepsilon b_{2}\ln b_{2}$, with $\varepsilon:=8\pi/(k\ell m^{2})$; at $\varepsilon=0$ the roots are $d_{2}[4]=\sqrt2$ and $d_{2}[5]=\frac{1+\sqrt5}{2}$, and the derivatives $2\sqrt2$ and $\sqrt5$ are nonzero, so the implicit function theorem gives $|b_{2}-d_{2}[N]|=O(\varepsilon)=O(m^{-2})$ in both cases. Let now $N\ge6$, let $A_{N}(\beta)$ be the $(n-1)\times(n-1)$ matrix of \cite[(3.19)--(3.20)]{Wiygul}, and put $F(x):=A_{N}(x_{2})x-e_{1}$ and $G(x):=(x_{i}\ln x_{i})_{i=2}^{n}$ for $x=(x_{2},\dots,x_{n})$ with positive entries. The limiting system is $F(x)=0$, and \eqref{eq:system} is $F(x)+\varepsilon G(x)=0$ with $\varepsilon:=8\pi/(k\ell m^{2})$. By (B1), $F(d)=0$ and $dF(d)$ is invertible; the map $H(x,\varepsilon):=F(x)+\varepsilon G(x)$ is smooth near $(d,0)$ with $\partial_{x}H(d,0)=dF(d)$, so the implicit function theorem gives a smooth branch $x(\varepsilon)$
of solutions with $x(0)=d$, unique in a neighbourhood of $d$, and $|x(\varepsilon)-d|\le C[N]\varepsilon$ for small $\varepsilon$. By (B2), $b[N,k,\ell,m]=x(\varepsilon)$ for $m\ge m_{0}[N]$, which gives the first estimate for $2\le i\le n$, hence for all $i$ by the symmetry. Since $2-d_{2}[N]=2(1-\cos(\pi/N))$ by Proposition \ref{prop:b2}, the second estimate holds for $m$ large, and the last statement follows from $d_{i}[N]>0$.
\end{proof}

\begin{remark}[The heights form the Fiedler vector]\label{rem:fiedler}
With the consistent orientation, $(Db)_{v}=b_{v-1}-b_{v}$, and for the limiting ratios
$b_{i}=\sin(i\pi/N)/\sin(\pi/N)$ one computes
\[
(Db)_{v}=\frac{\sin\frac{(v-1)\pi}{N}-\sin\frac{v\pi}{N}}{\sin\frac\pi N}
=-\frac{\cos\frac{(2v-1)\pi}{2N}}{\cos\frac{\pi}{2N}}=-\frac{c^{(1)}_{v}}{\cos\frac{\pi}{2N}} .
\]
Thus $Db$ is the eigenvector of $L(P_{N})=DD^{\mathsf T}$ for its smallest nonzero eigenvalue, as it
must be: $DD^{\mathsf T}(Db)=D(D^{\mathsf T}Db)=(2-b_{2})Db$. In the linear model of the construction
the vector of heights of the tori is proportional to $Db$ \cite[(2.15)--(2.17)]{Wiygul}, so the
stacking configuration is, to leading order, the Fiedler vector of the chain. This is the geometric
reason why the mode which is constant on each torus is governed by the spectral gap and not by
another eigenvalue of $L(P_{N})$.
\end{remark}

\begin{remark}[Chebyshev polynomials]\label{rem:cheb}
In \cite[Lemma 3.18]{Wiygul} the number $d_{2}[N]$ is characterised as the largest root of a
polynomial $P_{n+1}$ defined by the recursion $P_{i+1}(\lambda)=\lambda P_{i}(\lambda)-P_{i-1}(\lambda)$
with initial data $P_{1}=2$, $P_{2}=\lambda$ for even $N$ and $P_{1}=1$, $P_{2}=\lambda-1$ for odd $N$
\cite[(3.22)--(3.25)]{Wiygul}. This is the Chebyshev recursion, and the substitution
$\lambda=2\cos\theta$ gives $P_{i}(2\cos\theta)=2\cos((i-1)\theta)$ in the even case and
$P_{i}(2\cos\theta)=\cos\bigl(\tfrac{(2i-1)\theta}{2}\bigr)/\cos\tfrac\theta2$ in the odd case, as one
checks from $2\cos\theta\cos\alpha=\cos(\alpha+\theta)+\cos(\alpha-\theta)$ and the initial data.
The largest root of $P_{n+1}$ is therefore $2\cos\theta_{0}$ with $\theta_{0}=\pi/(2n)=\pi/N$ for
$N=2n$ and $\theta_{0}=\pi/(2n+1)=\pi/N$ for $N=2n+1$, in agreement with Proposition \ref{prop:b2},
which also explains the monotonicity of $d_{2}[N]$ in $N$ observed in \cite{Wiygul}. The closed
form of all the ratios $d_{i}[N]$, and their interpretation as the Perron vector of the line graph,
do not seem to have been recorded before.
\end{remark}

\section{The channel inequality and the conductances}\label{sec:channel}

\begin{lemma}[Channel inequality]\label{lem:channel}
Let $a>0$, let $\bK_{a}=[-a,a]\times\bS^{1}$ with the flat metric $\widehat\chi=dt^{2}+d\theta^{2}$,
and let $\Gamma_{\pm}=\{t=\pm a\}$. For every $f\in H^{1}(\bK_{a})$,
\[
\int_{\bK_{a}}|\nabla f|^{2}\dd t\dd\theta\ \ge\ \frac{\pi}{a}\Bigl(\avg_{\Gamma_{+}}f-\avg_{\Gamma_{-}}f\Bigr)^{2},
\qquad \avg_{\Gamma_{\pm}}f:=\frac1{2\pi}\int_{0}^{2\pi}f(\pm a,\theta)\dd\theta,
\]
with equality if and only if $f(t,\theta)=\alpha+\beta t$ for constants $\alpha,\beta$.
\end{lemma}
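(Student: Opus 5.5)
The plan is to reduce to the angular mean and apply Cauchy--Schwarz in $t$. By density of smooth functions in $H^{1}(\bK_{a})$, and continuity of the trace map $H^{1}(\bK_{a})\to L^{2}(\Gamma_{\pm})$ (so the averages are continuous in $H^{1}$), it suffices to prove the inequality for $f$ smooth up to the boundary; the equality case will be handled separately by tracking each inequality used.

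Define the angular mean $\bar f(t):=\frac1{2\pi}\int_{0}^{2\pi}f(t,\theta)\dd\theta$, so that $\bar f(\pm a)=\avg_{\Gamma_{\pm}}f$. Two estimates follow. First, discard the $\theta$-derivative: $\int|\nabla f|^{2}\ge\int_{-a}^{a}\int_{0}^{2\pi}(\partial_{t}f)^{2}\dd\theta\dd t$. Second, Jensen (or Cauchy--Schwarz) in $\theta$ gives $\int_{0}^{2\pi}(\partial_{t}f)^{2}\dd\theta\ge2\pi\,\bar f'(t)^{2}$, since $\bar f'(t)=\frac1{2\pi}\int\partial_{t}f\dd\theta$. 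Then Cauchy--Schwarz in $t$ gives
\[
\bigl(\bar f(a)-\bar f(-a)\bigr)^{2}=\Bigl(\int_{-a}^{a}\bar f'\dd t\Bigr)^{2}\le2a\int_{-a}^{a}\bar f'^{2}\dd t .
\]
Combining, $\int|\nabla f|^{2}\ge2\pi\cdot\frac{1}{2a}(\bar f(a)-\bar f(-a))^{2}=\frac{\pi}{a}(\cdots)^{2}$. For $H^{1}$ functions one can run the same chain directly, since $\bar f\in H^{1}(-a,a)$ by Fubini and is absolutely continuous.

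For the equality case, if $f=\alpha+\beta t$ all three inequalities are equalities; a direct check gives $4\pi a\beta^{2}$ on both sides. Conversely, equality forces three things: $\partial_{\theta}f=0$ a.e.; $\partial_{t}f$ independent of $\theta$, which is in any case implied by the first; and $\bar f'$ constant a.e. by the equality case of Cauchy--Schwarz. Hence $f(t,\theta)=\bar f(t)$ with $\bar f$ affine.

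The only point needing care is the justification for $H^{1}$ functions in the equality case, namely that $\partial_{\theta}f=0$ in $L^{2}$ implies $f$ depends only on $t$ a.e. This follows by expanding $f$ in Fourier series in $\theta$: the nonzero modes have zero $L^{2}$ norm. It is routine, and I expect no real obstacle. Note also that conformal invariance (Lemma \ref{lem:transfer}) is what later lets this flat statement apply to the tunnels, but it is not needed here.
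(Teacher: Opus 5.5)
Your proof is correct and is essentially the paper's argument: both combine two Cauchy--Schwarz steps with discarding $f_{\theta}$, only in the opposite order, since the paper applies Cauchy--Schwarz in $t$ for each fixed $\theta$ and then averages in $\theta$, whereas you average first via $\bar f$. Running the chain directly for $H^{1}$ functions, as you do, also settles the equality case without depending on the density reduction, which cannot by itself transfer the characterisation of equality.
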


\begin{proof}
Both sides are continuous on $H^{1}(\bK_{a})$, the traces on $\Gamma_{\pm}$ being continuous, so it
suffices to consider smooth $f$. For each $\theta$, $f(a,\theta)-f(-a,\theta)=\int_{-a}^{a}f_{t}\dd t$,
so by the Cauchy--Schwarz inequality
\begin{equation}\label{eq:CS1}
\bigl(f(a,\theta)-f(-a,\theta)\bigr)^{2}\le2a\int_{-a}^{a}f_{t}(t,\theta)^{2}\dd t .
\end{equation}
With $\delta:=\avg_{\Gamma_{+}}f-\avg_{\Gamma_{-}}f=\frac1{2\pi}\int_{0}^{2\pi}(f(a,\theta)-f(-a,\theta))\dd\theta$,
a second application gives
\begin{equation}\label{eq:CS2}
\delta^{2}\le\frac1{2\pi}\int_{0}^{2\pi}\bigl(f(a,\theta)-f(-a,\theta)\bigr)^{2}\dd\theta .
\end{equation}
Combining, $\delta^{2}\le\frac{a}{\pi}\int_{\bK_{a}}f_{t}^{2}\le\frac a\pi\int_{\bK_{a}}|\nabla f|^{2}$.
Equality forces $f_{t}$ to be constant in $t$ for almost every $\theta$ (equality in
\eqref{eq:CS1}), $f(a,\theta)-f(-a,\theta)$ to be constant in $\theta$ (equality in \eqref{eq:CS2}),
and $f_{\theta}\equiv0$ (equality in the last step); together these give $f=\alpha+\beta t$.
Conversely, for $f=t/a$ one has $\delta=2$ and $\int|\nabla f|^{2}=a^{-2}\cdot2a\cdot2\pi=(\pi/a)\delta^{2}$.
\end{proof}

\begin{corollary}\label{cor:channel}
Let $g$ be a metric on $\bK_{a}$ with $(1-\varepsilon)\rho^{2}\widehat\chi\le g\le(1+\varepsilon)\rho^{2}\widehat\chi$
for a positive function $\rho$ and some $\varepsilon\in(0,\tfrac12]$. Then for every $f\in H^{1}(\bK_{a})$
\[
\int_{\bK_{a}}|\nabla f|_{g}^{2}\dd A_{g}\ \ge\ \frac{1-\varepsilon}{1+\varepsilon}\cdot\frac{\pi}{a}
\Bigl(\avg_{\Gamma_{+}}f-\avg_{\Gamma_{-}}f\Bigr)^{2}.
\]
\end{corollary}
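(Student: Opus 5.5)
The plan is to reduce to Lemma \ref{lem:channel} in two steps: first remove the conformal factor $\rho^{2}$ by conformal invariance, then compare $g$ with $\rho^{2}\widehat\chi$ using Lemma \ref{lem:transfer}. The right-hand side involves only the averages of $f$ on $\Gamma_{\pm}$, taken with respect to the coordinate $\theta$. These depend only on the trace of $f$ and not on the metric, so they are the same quantities as in Lemma \ref{lem:channel}.

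Set $h:=\rho^{2}\widehat\chi$ and $h':=g$. The hypothesis is exactly $(1-\varepsilon)h\le h'\le(1+\varepsilon)h$ with $\varepsilon\in(0,\tfrac12]$. Lemma \ref{lem:transfer} then gives
\[
\int_{\bK_{a}}|\nabla f|_{g}^{2}\dd A_{g}\ \ge\ \frac{1-\varepsilon}{1+\varepsilon}\int_{\bK_{a}}|\nabla f|_{h}^{2}\dd A_{h}.
\]
By the conformal invariance of the Dirichlet integral in dimension two, also recorded in Lemma \ref{lem:transfer}, we have $|\nabla f|_{h}^{2}\dd A_{h}=|\nabla f|_{\widehat\chi}^{2}\dd t\dd\theta$ pointwise. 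This requires $\rho$ to be positive and at least measurable and locally bounded so that $h$ is a metric; this is implicit in the statement, and in our applications $\rho$ is smooth. The right-hand integral is therefore the flat Dirichlet integral, and Lemma \ref{lem:channel} bounds it from below by $\frac{\pi}{a}(\avg_{\Gamma_{+}}f-\avg_{\Gamma_{-}}f)^{2}$. Chaining the two inequalities gives the claim.

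There is no real obstacle. The one point to check is that $H^{1}(\bK_{a})$ is the same space for $g$ and for $\widehat\chi$, so that both sides make sense for the same $f$. This holds if $\rho$ is bounded above and below on the compact cylinder, since then all three metrics are uniformly equivalent. Alternatively, one can argue for smooth $f$ and pass to the limit by density, using continuity of the traces as in the proof of Lemma \ref{lem:channel}.
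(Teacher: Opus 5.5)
Your proposal is correct and follows the paper's proof exactly: Lemma \ref{lem:transfer} with $h=\rho^{2}\widehat\chi$ and $h'=g$, then conformal invariance to pass from $\rho^{2}\widehat\chi$ to $\widehat\chi$, then Lemma \ref{lem:channel}. Your remarks that the $\theta$-averages do not depend on the metric and that $H^{1}(\bK_{a})$ is the same space for all three metrics are sound checks that the paper leaves implicit.
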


\begin{proof}
By Lemma \ref{lem:transfer} the Dirichlet integral of $g$ is at least $\frac{1-\varepsilon}{1+\varepsilon}$
times that of $\rho^{2}\widehat\chi$, which equals that of $\widehat\chi$ by conformal invariance.
\end{proof}

The channel inequality plays the role that the capacity of a tunnel would play, but it is an
inequality for arbitrary $H^{1}$ functions with no condition on the boundary data, and it is
sharp; its conformal invariance is what allows the geometry of a tunnel to enter only through the
single number $a$.

\begin{definition}\label{def:conductance}
We call $\pi/a$ the \emph{conductance} of a channel of conformal length $2a$, and for a stacked Clifford torus of type $(N,k,\ell,m)$, with $\Sigma^{0}$ and $\Psi$ as in Definition \ref{def:stacked}, we set
\[
C_{i}:=M\,\frac{\pi}{a_{i}}=\frac{k\ell m^{2}\pi}{a_{i}},\qquad 1\le i\le N-1,
\]
the total conductance of the $M$ tunnels joining the $i$-th and the $(i+1)$-st tori.
\end{definition}
The numbers $a_{i}$, and hence the $C_{i}$, depend on the parameters $\zeta$ of the initial surface $\Sigma^{0}$ witnessing Definition \ref{def:stacked}. Proposition \ref{prop:conductance} shows that this dependence affects $C_{i}$ only at relative order $O(m^{-2})$, uniformly for
$|\zeta|,|\xi|\le c[N,k,\ell]$, so that all statements below are independent of the choice.

\begin{proposition}\label{prop:conductance}
For $m\ge m_{0}$ and every $1\le i\le N-1$,
\[
C_{i}=\frac{8\pi^{2}}{2-b_{2}[N,k,\ell,m]}\bigl(1+O(m^{-2})\bigr)
=\frac{8\pi^{2}}{2-b_{2}[N]}\bigl(1+O(m^{-2})\bigr)
=\frac{4\pi^{2}}{1-\cos(\pi/N)}\bigl(1+O(m^{-2})\bigr).
\]
In particular $C_{i}$ is bounded above and below by positive constants depending only on $N$, the
total conductance is to leading order the same for all layers and independent of $k$, $\ell$ and
$m$, and $C_{N}:=8\pi^{2}/(2-b_{2}[N])$ satisfies $C_{N}=8N^{2}+O(1)$ as $N\to\infty$.
\end{proposition}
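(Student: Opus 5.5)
The plan is to read everything off the explicit expansion \eqref{eq:aiexp} of the conformal half-length $a_{i}$, without further analysis. Write $\varepsilon:=8\pi/(k\ell m^{2})$ and $\beta:=b_{2}[N,k,\ell,m]$. By Corollary \ref{cor:rate}, $2-\beta\ge1-\cos(\pi/N)>0$, so the leading term of \eqref{eq:aiexp},
\[
\frac{k\ell m^{2}}{4\pi}\Bigl(1-\frac{\beta}{2}\Bigr)=\frac{k\ell m^{2}}{8\pi}(2-\beta),
\]
is of exact order $m^{2}$ with constants depending only on $N,k,\ell$. I will bound the remaining terms in \eqref{eq:aiexp} by a constant $C[N,k,\ell]$:
\begin{itemize}
\item $|\zeta_{1}|\le c$.
\item $|\ln b_{i}|\le C[N]$, since the $b_{i}$ lie in a compact subset of $(0,\infty)$ by Corollary \ref{cor:rate}.
\item $|\zeta_{i}|/(k\ell m^{2})\le c$.
\item $\ln\bigl(1+\sqrt{1-100\ell^{2}m^{2}\tau_{i}^{2}}\bigr)\in[0,\ln2]$, because $10\ell m\tau_{i}\le1$ for $m$ large (indeed $\tau_{i}\le C\tau_{1}$ and $\tau_{1}$ decays like $e^{-cm^{2}}$).
\end{itemize}
Hence $a_{i}=\frac{k\ell m^{2}}{8\pi}(2-\beta)\bigl(1+O(m^{-2})\bigr)$, uniformly in $|\zeta|,|\xi|\le c$, which is the content of Remark \ref{rem:regime}.

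Inverting this expansion, with $M=k\ell m^{2}$, gives
\[
C_{i}=\frac{M\pi}{a_{i}}=\frac{k\ell m^{2}\pi\cdot8\pi}{k\ell m^{2}(2-\beta)}\bigl(1+O(m^{-2})\bigr)=\frac{8\pi^{2}}{2-\beta}\bigl(1+O(m^{-2})\bigr).
\]
This is the first equality.

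For the second equality, Corollary \ref{cor:rate} gives $|\beta-b_{2}[N]|\le Cm^{-2}$. Since both $2-\beta$ and $2-b_{2}[N]$ are bounded below by $1-\cos(\pi/N)$, it follows that
\[
\frac{2-\beta}{2-b_{2}[N]}=1+O(m^{-2}).
\]
The third equality is Proposition \ref{prop:b2}: there $b_{2}[N]=2\cos(\pi/N)$, so $2-b_{2}[N]=2(1-\cos(\pi/N))$.

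The qualitative consequences then follow directly.
\begin{itemize}
\item The two-sided bounds on $C_{i}$ hold for $m\ge m_{0}$, with the error absorbed.
\item The leading term $4\pi^{2}/(1-\cos(\pi/N))$ is independent of $i$, $k$, $\ell$ and $m$.
\item For the asymptotics, $1-\cos(\pi/N)=\frac{\pi^{2}}{2N^{2}}-\frac{\pi^{4}}{24N^{4}}+O(N^{-6})$. Therefore
\[
\frac{4\pi^{2}}{1-\cos(\pi/N)}=\frac{8N^{2}}{1-\pi^{2}/(12N^{2})+O(N^{-4})}=8N^{2}+\frac{2\pi^{2}}{3}+O(N^{-2}),
\]
which is $8N^{2}+O(1)$.
\end{itemize}

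The only delicate point is the uniformity of the error in $i$ and in $\zeta$. This needs the lower bound on $2-\beta$, so that the leading term really is of order $m^{2}$, and the compactness of $\{b_{i}\}$ in $(0,\infty)$, so that the $\ln b_{i}$ terms stay bounded. Both are supplied by Corollary \ref{cor:rate}, so I expect no genuine obstacle.
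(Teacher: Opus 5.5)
Your proposal is correct and follows the paper's own proof essentially line by line. You bound the four lower-order terms of \eqref{eq:aiexp} by $O(1)$, use $2-b_{2}\ge1-\cos(\pi/N)$ from Corollary \ref{cor:rate} to turn that into a relative $O(m^{-2})$ error in $a_{i}$, invert to get $C_{i}$, and pass to $b_{2}[N]=2\cos(\pi/N)$ via Corollary \ref{cor:rate} and Proposition \ref{prop:b2}; your sharper asymptotic $C_{N}=8N^{2}+2\pi^{2}/3+O(N^{-2})$ is also correct.
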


\begin{proof}
In \eqref{eq:aiexp} the terms $\zeta_{1}$, $\ln b_{i}$, $\zeta_{i}/(k\ell m^{2})$ and
$\ln(1+\sqrt{1-100\ell^{2}m^{2}\tau_{i}^{2}})\in[0,\ln2]$ are bounded by constants depending only on
$N,k,\ell$, so $a_{i}=\frac{k\ell m^{2}}{8\pi}(2-b_{2})+O(1)$, and since $2-b_{2}\ge1-\cos(\pi/N)$ by
Corollary \ref{cor:rate}, $a_{i}=\frac{k\ell m^{2}}{8\pi}(2-b_{2})(1+O(m^{-2}))$. Hence
$C_{i}=k\ell m^{2}\pi/a_{i}=8\pi^{2}(2-b_{2})^{-1}(1+O(m^{-2}))$. Corollary \ref{cor:rate} gives
$2-b_{2}=(2-b_{2}[N])(1+O(m^{-2}))$, and $2-b_{2}[N]=2(1-\cos(\pi/N))$ by Proposition \ref{prop:b2}.
Finally $1-\cos(\pi/N)=\frac{\pi^{2}}{2N^{2}}-\frac{\pi^{4}}{24N^{4}}+O(N^{-6})$ gives
$C_{N}=8N^{2}\bigl(1-\frac{\pi^{2}}{12N^{2}}+O(N^{-4})\bigr)^{-1}=8N^{2}+O(1)$.
\end{proof}

\begin{remark}[The regime $m\gg N$]\label{rem:regime}
Our theorem is a fixed-$N$ result and no uniformity in $N$ is claimed. For each fixed $N$, Proposition \ref{prop:b2}, Corollary \ref{cor:rate} and \eqref{eq:aiexp} give
\[
a_{i}
=
\frac{k\ell}{4\pi}\left(1-\cos\frac{\pi}{N}\right)m^{2}
+O_{N,k,\ell}(1)
\qquad (m\to\infty).
\]
Moreover,
\[
\frac{k\ell}{4\pi}\left(1-\cos\frac{\pi}{N}\right)
=
\frac{k\ell\pi}{8N^{2}}\bigl(1+O(N^{-2})\bigr)
\qquad (N\to\infty).
\]
Thus, at the level of the leading term, formally allowing $N$ to vary suggests the joint long-neck scale $\sqrt{k\ell}\,m/N\to\infty$. No estimate proved here is uniform in $N$, however, and in particular no two-parameter error estimate in $N$ and $m$ is asserted. The limiting graph value $4$ of Theorem B is independent of $N$.
\end{remark}

\section{The invariant sector}\label{sec:sector}

Throughout this section $\Sigma$ is a stacked Clifford torus of type $(N,k,\ell,m)$ with
$m\ge m_{0}$, decomposed as in \eqref{eq:decomp}, and $\cG=\cG[k,\ell,m]$. Since every element of
$\cG$ preserves each side of the tori, $\cG$ acts on functions on $\Sigma$ by composition, and we set
\[
H^{1}(\Sigma)^{\cG}:=\bigl\{u\in H^{1}(\Sigma):u\circ\mathfrak g=u\ \text{for all }\mathfrak g\in\cG\bigr\}.
\]
The group $\cG$ maps each toral region onto itself and permutes the tunnels of each layer among
themselves. Moreover every $\mathfrak g\in\cG$ carries the tunnel of the $i$-th layer over a removal
point $p$ onto the tunnel of the same layer over $\mathfrak gp$ by a map which, in the tunnel
coordinates \eqref{eq:kappa}, has the form $(t,\theta)\mapsto(t,\pm\theta+\theta_{0})$: indeed, by
the description of the action following \eqref{eq:G}, $\mathfrak g$ fixes the coordinate $\rz$ and
acts on $(\rx,\ry)$ by an affine isometry whose linear part is diagonal with entries $\pm1$, so that
it preserves the height $\rz^{K}_{i}+\tau_{i}t$, hence $t$, and rotates or reflects the polar angle
$\theta$ about the removal point. In particular a function which is constant on
each toral region and depends only on $t$ and on the layer in each tunnel is $\cG$-invariant. For
$u\in H^{1}(\Sigma)$ we write
\[
c_{j}:=\avg_{\cT[j]}u,\qquad d_{i}:=c_{i+1}-c_{i},\qquad
\cE_{\mathrm{tor}}:=\sum_{j=1}^{N}\int_{\cT[j]}|\nabla u|^{2},\qquad
\cE_{\mathrm{tun}}:=\sum_{i=1}^{N-1}\sum_{s=1}^{M}\int_{\cK_{i}^{(s)}}|\nabla u|^{2},
\]
all integrals being taken with respect to the area of $\Sigma$ unless a flat measure $d\rx\,d\ry$
or $dt\,d\theta$ is indicated. Finally we set
\begin{equation}\label{eq:weights}
A_{j}:=|\cT[j]|,\qquad \Theta_{j}:=M_{j}\,\pi R^{2},\qquad \widetilde A_{j}:=A_{j}+\Theta_{j},\qquad
\vartheta_{j}:=\frac{\Theta_{j}}{2\pi^{2}}=\frac{M_{j}}{2M}\cdot\frac{k}{100\pi\ell}\le\frac{1}{100\pi},
\end{equation}
where $M_{j}=\#Z_{j}\in\{M,2M\}$ is the number of tunnels adjoining $\cT[j]$; by \eqref{eq:Pj},
$|P_{j}|=2\pi^{2}(1-\vartheta_{j})$ and $\Theta_{j}=|\Tfl\setminus P_{j}|$.

\subsection{Areas}

\begin{lemma}\label{lem:mass}
For $m\ge m_{0}$, every tunnel half and every $j$,
\begin{gather*}
\bigl|\cK_{i}^{(s),\pm}\bigr|=\pi R^{2}\bigl(1+O(m^{-2})\bigr)\le2\pi R^{2},\qquad
A_{j}=|P_{j}|\bigl(1+O(m^{-2})\bigr),\\
\widetilde A_{j}=2\pi^{2}\bigl(1+O(m^{-2})\bigr),\qquad \pi^{2}\le A_{j}\le\widetilde A_{j}\le3\pi^{2}.
\end{gather*}
In particular, for each layer $i$, $M|\cK_{i}^{(s),\pm}|=\pi R^{2}M(1+O(m^{-2}))$: the halves of the
tunnels of one layer have, to leading order, exactly the area $\pi R^{2}M$ removed from each of the
two tori they join.
\end{lemma}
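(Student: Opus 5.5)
Each area will be computed in the model coordinates, after which the comparison \eqref{eq:comp} and Lemma \ref{lem:transfer} give $dA_{g}=(1+O(m^{-2}))$ times the model area element. The model area element is $\tau_{i}^{2}\cosh^{2}t\,dt\,d\theta$ on the tunnels and $d\rx\,d\ry$ on the toral regions.

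\emph{Tunnel halves.} On $\cK_{i}^{(s),+}$ the model area is
\[
2\pi\tau_{i}^{2}\int_{0}^{a_{i}}\cosh^{2}t\dd t=\pi\tau_{i}^{2}\bigl(a_{i}+\sinh a_{i}\cosh a_{i}\bigr).
\]
By \eqref{eq:ai}, $\tau_{i}\cosh a_{i}=R$, so $\tau_{i}^{2}\sinh a_{i}\cosh a_{i}=R^{2}\tanh a_{i}$. The definition of $a_{i}$ gives $\cosh a_{i}=R/\tau_{i}\ge\exp(cm^{2})$, hence $\tanh a_{i}=1-O(e^{-cm^{2}})$. The remaining term satisfies $\tau_{i}^{2}a_{i}\le Cm^{2}\tau_{1}^{2}$ by \eqref{eq:aiexp}, and this is $R^{2}\cdot O(e^{-cm^{2}})$ because $R^{-2}=100\ell^{2}m^{2}$ is polynomial. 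Thus the model area equals $\pi R^{2}(1+O(m^{-2}))$, and the same holds for the true area $|\cK_{i}^{(s),+}|$ after the $(1\pm\varepsilon_{m})$ factor. The same argument applies to the minus half. The bound by $2\pi R^{2}$ follows for $m\ge m_{0}$. Summing over the $M$ tunnels of a layer gives the final assertion.

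\emph{Toral regions.} Via the chart $\pi_{j}\circ\Psi^{-1}$, \eqref{eq:comp} gives $A_{j}=|P_{j}|(1+O(m^{-2}))$, where $|P_{j}|$ is the flat area. This also uses that the discs $B(p,R)$, $p\in Z_{j}$, are pairwise disjoint and embed in $\Tfl$, which holds by \eqref{eq:ratio} since the points of $Z_{j}$ are at least $Y>4R$ apart. Therefore $|P_{j}|=2\pi^{2}-M_{j}\pi R^{2}=2\pi^{2}(1-\vartheta_{j})$. Then
\[
\widetilde A_{j}=|P_{j}|(1+O(m^{-2}))+\Theta_{j}=2\pi^{2}+O(m^{-2})
\]
because $|P_{j}|\le2\pi^{2}$, and this is $2\pi^{2}(1+O(m^{-2}))$. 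For the crude bounds, $\vartheta_{j}\le 1/(100\pi)$ together with $k\le\ell$ gives $|P_{j}|\ge2\pi^{2}(1-1/(100\pi))$. Hence $A_{j}\ge\pi^{2}$ for $m$ large. Finally $\widetilde A_{j}\le2\pi^{2}(1+Cm^{-2})\le3\pi^{2}$.

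The only step needing care is the tunnel computation. There the $O(m^{-2})$ error comes solely from \eqref{eq:comp}, while the exact integral contributes only exponentially small corrections. This holds because $a_{i}$ grows only like $m^{2}$ and is beaten by $\tau_{i}^{2}$. Everything else is bookkeeping with \eqref{eq:weights}.
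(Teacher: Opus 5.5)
Your proof is correct and follows the paper's argument essentially line by line. You evaluate the same exact integral $\pi\tau_i^2(a_i+\sinh a_i\cosh a_i)$ via $\tau_i\cosh a_i=R$ with exponentially small corrections, so that the $O(m^{-2})$ comes only from \eqref{eq:comp}, and you handle $|P_j|$, $\widetilde A_j$ and the crude bounds with the same disjoint-discs and $\vartheta_j\le 1/(100\pi)$ bookkeeping. The only cosmetic difference is that you bound the $a_i$ term as $\tau_i^2a_i\le Cm^2\tau_1^2$, whereas the paper writes it as $R^2a_i\sech^2a_i$ with $a_i\ge cm^2$; the two are equivalent.
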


\begin{proof}
In the model metric $\tau_{i}^{2}\cosh^{2}t\,\widehat\chi$ the area of $\{0\le t\le a_{i}\}$ is
$2\pi\tau_{i}^{2}\int_{0}^{a_{i}}\cosh^{2}t\dd t=\pi\tau_{i}^{2}(a_{i}+\sinh a_{i}\cosh a_{i})
=\pi R^{2}(\tanh a_{i}+a_{i}\sech^{2}a_{i})$, using $\tau_{i}\cosh a_{i}=R$. Since
$a_{i}\ge cm^{2}$ by \eqref{eq:aiexp} and Corollary \ref{cor:rate}, and $1-\tanh a\le2e^{-2a}$,
$a\sech^{2}a\le4ae^{-2a}$, this is $\pi R^{2}(1+O(m^{-2}))$; by \eqref{eq:comp} and Lemma
\ref{lem:transfer} the area in $\Sigma$ differs from it by a factor $1+O(m^{-2})$. Likewise
$A_{j}=|P_{j}|(1+O(m^{-2}))$ by \eqref{eq:comp}. Since the discs $B(p,R)$, $p\in Z_{j}$, are
pairwise disjoint by \eqref{eq:ratio}, $|P_{j}|=2\pi^{2}-M_{j}\pi R^{2}$, so
$\widetilde A_{j}=2\pi^{2}+O(m^{-2})|P_{j}|=2\pi^{2}(1+O(m^{-2}))$. The bounds on $A_{j}$ follow
from $\vartheta_{j}\le1/(100\pi)$ for $m\ge m_{0}$.
\end{proof}

\subsection{Poincar\'e and trace inequalities on the toral regions}

\begin{lemma}[Uniform extension]\label{lem:extension}
Let $Z=Z_{j}$ and $P=P_{j}$ be as in \eqref{eq:Zj}--\eqref{eq:Pj}. There is a linear operator
$E:H^{1}(P)\to H^{1}(\Tfl)$ with $Eu|_{P}=u$, $E(1)=1$, which commutes with the action of $\cG$ and
satisfies
\[
\|\nabla Eu\|_{L^{2}(\Tfl)}\le\Lambda\,\|\nabla u\|_{L^{2}(P)}\qquad\text{for all }u\in H^{1}(P),
\]
with an absolute constant $\Lambda$.
\end{lemma}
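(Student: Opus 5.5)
The plan is to build $E$ hole by hole, by a local extension on the annulus around each removed disc, using that all discs are congruent and, by \eqref{eq:ratio}, well separated relative to their radius.

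For each $p\in Z$ let $A_{p}:=B(p,2R)\setminus B(p,R)$. Since distinct removal points are at distance at least $Y>4R$, the discs $B(p,2R)$ are pairwise disjoint, so $A_{p}\subset P$. For $u\in H^{1}(P)$ we define $Eu:=u$ on $P$ and, on each $B(p,R)$, set $Eu:=\overline u_{p}+\mathcal E_{p}(u-\overline u_{p})$. Here $\overline u_{p}:=\avg_{A_{p}}u$ is the flat mean over the annulus, and $\mathcal E_{p}$ is reflection across the circle $|x-p|=R$: in polar coordinates about $p$, $(\mathcal E_{p}w)(r,\theta):=w(R^{2}/r,\theta)$ for $R/2\le r<R$. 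We then multiply by a radial cut-off $\eta(r/R)$ with $\eta=1$ near $1$ and $\eta=0$ on $[0,1/2]$, so that $\mathcal E_{p}w$ vanishes near $p$. Equivalently, we rescale $A_{p}$ to the fixed annulus $\{1\le|y|\le2\}$ and use a fixed bounded extension operator $H^{1}(\{1\le|y|\le2\})\to H^{1}(\{|y|\le2\})$ built from reflection and cut-off.

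The key steps are as follows.

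First, the traces from inside and outside agree on $\partial B(p,R)$, so $Eu\in H^{1}(\Tfl)$. Constants are reproduced exactly, which gives $E(1)=1$, and $E$ is clearly linear.

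Second, the energy bound. In the rescaled picture, the reflection has bounded Jacobian and the cut-off has bounded gradient, so with $w=u-\overline u_{p}$ we get
\[
\|\nabla \mathcal E_{p}w\|_{L^{2}(B(p,R))}\le C\bigl(\|\nabla w\|_{L^{2}(A_{p})}+R^{-1}\|w\|_{L^{2}(A_{p})}\bigr).
\]
The Poincaré inequality on the annulus $\{1\le|y|\le2\}$, rescaled (this is scale invariant in two dimensions), gives $R^{-1}\|w\|_{L^{2}(A_{p})}\le C\|\nabla u\|_{L^{2}(A_{p})}$. Hence $\|\nabla Eu\|^{2}_{L^{2}(B(p,R))}\le C_{0}\|\nabla u\|^{2}_{L^{2}(A_{p})}$ with an absolute $C_{0}$. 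Summing over $p$ and using the disjointness of the $A_{p}$ yields $\Lambda^{2}=1+C_{0}$, which is independent of $m$, $k$, $\ell$, $j$ and of the number of holes.

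Third, equivariance. Every $\mathfrak g\in\cG$ acts on $\Tfl$ by an affine isometry permuting $Z$, and it maps $A_{p}$ onto $A_{\mathfrak gp}$ and $B(p,R)$ onto $B(\mathfrak gp,R)$. Both the annulus mean and the radial reflection-plus-cut-off construction are defined purely metrically, so they commute with isometries carrying $p$ to $\mathfrak gp$. Hence $E(u\circ\mathfrak g)=(Eu)\circ\mathfrak g$.

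The only point needing care is the uniformity of $\Lambda$. It rests on the fact that the collar annuli $A_{p}$ lie in $P$ and are pairwise disjoint, which is guaranteed by $Y>4R$ in \eqref{eq:ratio}. It also rests on scale invariance of the rescaled estimates. I do not expect a genuine obstacle here.
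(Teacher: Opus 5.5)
Your proposal is correct and is essentially the paper's proof. The paper extends across each hole by taking $\bar u_p+\chi_p\cdot\bigl((u-\bar u_p)\circ\iota_p\bigr)$, where $\iota_p$ is the inversion in $\partial B(p,R)$ (your radial reflection $r\mapsto R^{2}/r$) and $\chi_p$ is a radial cut-off vanishing on $B(p,R/2)$. It bounds the energy by conformal invariance of the Dirichlet integral, the Jacobian bound $(R/|x-p|)^{4}\le1$, and the scaled Poincar\'e inequality on $\{1<|x|<2\}$, sums over the disjoint collars given by \eqref{eq:ratio}, and obtains $\cG$-equivariance exactly as you do, because the construction is metric and $\cG$ permutes $Z$ by affine isometries.
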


\begin{proof}
For $p\in Z$ let $B_{p}:=B(p,R)$ and $\mathcal A_{p}:=B(p,2R)\setminus\overline{B_{p}}$. By
\eqref{eq:ratio} distinct points of $Z$ are at distance greater than $4R$, so the discs $B(p,2R)$ are
pairwise disjoint and each annulus $\mathcal A_{p}$ is contained in $P$. Let
$\iota_{p}(x):=p+R^{2}(x-p)/|x-p|^{2}$ be the inversion in the circle $\partial B_{p}$; it is a
conformal diffeomorphism of $\mathcal A_{p}$ onto $B_{p}\setminus\overline{B(p,R/2)}$ which is the
identity on $\partial B_{p}$, and its Jacobian determinant on $\mathcal A_{p}$ is $(R/|x-p|)^{4}\le1$.
Fix a smooth $\chi:[0,\infty)\to[0,1]$ with $\chi=0$ on $[0,\tfrac12]$, $\chi=1$ on $[\tfrac34,\infty)$
and $|\chi'|\le8$, and put $\chi_{p}(x):=\chi(|x-p|/R)$ and $\bar u_{p}:=\avg_{\mathcal A_{p}}u$
(with respect to $d\rx\,d\ry$). Define
\[
Eu:=u\ \text{on }P,\qquad
Eu:=\bar u_{p}+\chi_{p}\cdot\bigl((u-\bar u_{p})\circ\iota_{p}\bigr)\ \text{on }B_{p}\setminus\overline{B(p,R/2)},
\qquad Eu:=\bar u_{p}\ \text{on }\overline{B(p,R/2)}.
\]
The three definitions agree on the common boundaries, since $\iota_{p}$ is the identity and
$\chi_{p}=1$ on $\partial B_{p}$, and $\chi_{p}=0$ on $\partial B(p,R/2)$; hence $Eu\in H^{1}(\Tfl)$,
$E$ is linear, $Eu|_{P}=u$ and $E(1)=1$.

Let $w:=(u-\bar u_{p})\circ\iota_{p}$ on $B_{p}\setminus\overline{B(p,R/2)}$. By the conformal
invariance of the Dirichlet integral, $\int|\nabla w|^{2}=\int_{\mathcal A_{p}}|\nabla u|^{2}$; by the
change of variables $x=\iota_{p}(y)$ and the bound on the Jacobian, $\int w^{2}\le\int_{\mathcal A_{p}}(u-\bar u_{p})^{2}$;
and by the Poincar\'e inequality on the annulus $\{R<|x-p|<2R\}$, whose constant scales with
$R^{2}$, $\int_{\mathcal A_{p}}(u-\bar u_{p})^{2}\le C_{\mathcal A}R^{2}\int_{\mathcal A_{p}}|\nabla u|^{2}$
with $C_{\mathcal A}$ the Poincar\'e constant of $\{1<|x|<2\}$. Since $|\nabla\chi_{p}|\le8/R$,
\[
\int_{B_{p}}|\nabla Eu|^{2}\le2\int|\nabla w|^{2}+2\cdot\frac{64}{R^{2}}\int w^{2}
\le\bigl(2+128\,C_{\mathcal A}\bigr)\int_{\mathcal A_{p}}|\nabla u|^{2}.
\]
Summing over $p\in Z$, the annuli being pairwise disjoint subsets of $P$,
$\|\nabla Eu\|_{L^{2}(\Tfl)}^{2}\le(3+128\,C_{\mathcal A})\|\nabla u\|_{L^{2}(P)}^{2}$. Finally each
$\mathfrak g\in\cG$ acts on $\Tfl$ by an affine isometry with $\mathfrak g(Z)=Z$, and
$\mathfrak g(B_{p})=B_{\mathfrak gp}$, $\mathfrak g\circ\iota_{p}=\iota_{\mathfrak gp}\circ\mathfrak g$,
$\chi_{\mathfrak gp}\circ\mathfrak g=\chi_{p}$ and $\avg_{\mathcal A_{\mathfrak gp}}(u\circ\mathfrak g^{-1})
=\avg_{\mathcal A_{p}}u$; hence $E(u\circ\mathfrak g^{-1})=(Eu)\circ\mathfrak g^{-1}$.
\end{proof}

\begin{lemma}[Poincar\'e inequality on the perforated torus]\label{lem:poincare}
Let $P=P_{j}$ and $c_{0}:=\min(k,\ell)^{2}/\Lambda^{2}$. For every $w\in H^{1}(P)^{\cG}$,
\[
\int_{P}|\nabla w|^{2}\dd\rx\dd\ry\ \ge\ c_{0}\,m^{2}\int_{P}\bigl(w-\avg_{P}w\bigr)^{2}\dd\rx\dd\ry .
\]
\end{lemma}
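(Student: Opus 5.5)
Extend $w$ to the whole flat torus by the operator $E$ of Lemma \ref{lem:extension}, and use the spectral gap of $\Tfl$ on $\cG$-invariant functions. Set $v:=Ew\in H^{1}(\Tfl)$. Since $E$ commutes with $\cG$ and $w$ is $\cG$-invariant, $v$ is $\cG$-invariant on $\Tfl$. Here $\cG$ acts through the translations $(\rx,\ry)\mapsto(\rx+2X,\ry)$, $(\rx,\ry)\mapsto(\rx,\ry+2Y)$ and the two coordinate reflections. Only the translations are needed.

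Expand $v$ in the Fourier basis $e^{i\sqrt2(p\rx+q\ry)}$, $p,q\in\bZ$. Invariance under translation by $2X=\sqrt2\pi/(km)$ forces $e^{i\sqrt2 p\cdot 2X}=e^{2\pi i p/(km)}=1$, so $p\in km\bZ$. Likewise invariance under translation by $2Y$ forces $q\in\ell m\bZ$. So every nonzero frequency present in $v$ has
\[
2(p^{2}+q^{2})\ \ge\ 2\min(k,\ell)^{2}m^{2}.
\]
By Parseval,
\[
\int_{\Tfl}|\nabla v|^{2}\ \ge\ 2\min(k,\ell)^{2}m^{2}\int_{\Tfl}\bigl(v-\avg_{\Tfl}v\bigr)^{2}.
\]

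Now return to $P$. The mean minimises the $L^{2}$ deviation over constants, and $P\subset\Tfl$ with $v|_{P}=w$. Hence
\[
\int_{P}(w-\avg_{P}w)^{2}\le\int_{P}(w-\avg_{\Tfl}v)^{2}\le\int_{\Tfl}(v-\avg_{\Tfl}v)^{2}.
\]
Combining this with the Fourier bound and with $\|\nabla v\|^{2}_{L^{2}(\Tfl)}\le\Lambda^{2}\|\nabla w\|^{2}_{L^{2}(P)}$ from Lemma \ref{lem:extension} gives
\[
\int_{P}|\nabla w|^{2}\ \ge\ \frac{2\min(k,\ell)^{2}}{\Lambda^{2}}\,m^{2}\int_{P}(w-\avg_{P}w)^{2}.
\]
This is stronger than the claim with $c_{0}=\min(k,\ell)^{2}/\Lambda^{2}$, since the extra factor $2$ only helps.

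I do not expect a real obstacle. The only point needing care is the bookkeeping of the frequency lattice, namely checking that the translations in $\cG$ are exactly those by $2X$ and $2Y$ and that $E$ preserves invariance. Both are stated earlier in the paper, after \eqref{eq:G} and in Lemma \ref{lem:extension}. The key input, a constant independent of $m$, is already built into Lemma \ref{lem:extension} through the fixed ratio \eqref{eq:ratio}.
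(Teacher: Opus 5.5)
Your proof is correct and follows the paper's route. Both extend by $E$, use that $\cG$-invariant functions on $\Tfl$ (equivalently, functions on the quotient torus $\bR^{2}/(2X\bZ\times2Y\bZ)$) have spectral gap $2\min(k,\ell)^{2}m^{2}$, and restrict back to $P$. The one difference is that you compare $\avg_{P}w$ with $\avg_{\Tfl}Ew$ through the minimising property of the mean. The paper instead normalises $\int_{P}w=0$ and bounds $|\Tfl|\,\overline W^{2}\le\vartheta_{j}\int_{\Tfl}W^{2}$ by Cauchy--Schwarz, which loses a factor $1-\vartheta_{j}$ that it then absorbs via $2(1-\vartheta_{j})\ge1$. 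Your variant is slightly cleaner and gives the better constant $2\min(k,\ell)^{2}/\Lambda^{2}$.
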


\begin{proof}
Replacing $w$ by $w-\avg_{P}w$, which is still $\cG$-invariant and has the same gradient, we may
assume $\int_{P}w=0$. Let $W:=Ew\in H^{1}(\Tfl)^{\cG}$ and $\overline W:=\avg_{\Tfl}W$. Since $W=w$ on
$P$ and $\int_{P}w=0$, $|\Tfl|\,\overline W=\int_{\Tfl\setminus P}W$, so by the Cauchy--Schwarz
inequality $|\Tfl|^{2}\,\overline W^{2}\le|\Tfl\setminus P|\int_{\Tfl}W^{2}$, that is
$|\Tfl|\,\overline W^{2}\le\vartheta_{j}\int_{\Tfl}W^{2}$. The function $W-\overline W$ has vanishing
mean and is invariant under the translations by $(2X,0)$ and $(0,2Y)$, hence descends to the torus
$\bR^{2}/(2X\bZ\times2Y\bZ)$, whose Laplace eigenvalues are
$(\pi p/X)^{2}+(\pi q/Y)^{2}=2m^{2}(k^{2}p^{2}+\ell^{2}q^{2})$, $p,q\in\bZ$; the variational
characterisation of the first nonzero eigenvalue on that torus gives
\begin{align*}
\int_{\Tfl}|\nabla W|^{2}\ &\ge\ 2\min(k,\ell)^{2}m^{2}\int_{\Tfl}\bigl(W-\overline W\bigr)^{2}
=2\min(k,\ell)^{2}m^{2}\Bigl(\int_{\Tfl}W^{2}-|\Tfl|\,\overline W^{2}\Bigr)\\
&\ge\ 2\min(k,\ell)^{2}m^{2}(1-\vartheta_{j})\int_{\Tfl}W^{2}
\ \ge\ 2\min(k,\ell)^{2}m^{2}(1-\vartheta_{j})\int_{P}w^{2}.
\end{align*}
With Lemma \ref{lem:extension} and $2(1-\vartheta_{j})\ge1$ this gives
$\Lambda^{2}\int_{P}|\nabla w|^{2}\ge\min(k,\ell)^{2}m^{2}\int_{P}w^{2}$.
\end{proof}

\begin{corollary}\label{cor:poincare}
There is $c=c[k,\ell]>0$ such that for $m\ge m_{0}$, every $j$ and every $w\in H^{1}(\cT[j])^{\cG}$,
\[
\int_{\cT[j]}|\nabla w|^{2}\ \ge\ c\,m^{2}\int_{\cT[j]}\bigl(w-\avg_{\cT[j]}w\bigr)^{2}.
\]
\end{corollary}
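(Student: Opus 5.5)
The plan is to transfer Lemma \ref{lem:poincare} from the flat perforated torus $P_{j}$ to $\cT[j]$ through the chart $\pi_{j}\circ\Psi^{-1}:\cT[j]\to P_{j}$, using the metric comparison \eqref{eq:comp} and Lemma \ref{lem:transfer}.

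Given $w\in H^{1}(\cT[j])^{\cG}$, set $\tilde w:=w\circ(\pi_{j}\circ\Psi^{-1})^{-1}$ on $P_{j}$. Since $\Psi$ is $\cG$-equivariant (Definition \ref{def:stacked}) and $\pi_{j}$ is $\cG$-equivariant (Lemma \ref{lem:initialtor}), the function $\tilde w$ lies in $H^{1}(P_{j})^{\cG}$, so Lemma \ref{lem:poincare} applies to it. By \eqref{eq:comp}, the metric $g$ on $\cT[j]$ is comparable to $d\rx^{2}+d\ry^{2}$ with $\varepsilon_{m}=3m^{-2}\le\tfrac12$. Lemma \ref{lem:transfer} then gives
\[
\int_{\cT[j]}|\nabla w|^{2}\ge\frac{1-\varepsilon_{m}}{1+\varepsilon_{m}}\int_{P_{j}}|\nabla\tilde w|^{2}\dd\rx\dd\ry .
\]

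For the right-hand side we use that the mean minimises the $L^{2}$ deviation from constants, in either measure. Applying this to the flat mean and then the area comparison $dA_{g}\le(1+\varepsilon_{m})\dd\rx\dd\ry$, we get
\[
\int_{\cT[j]}\bigl(w-\avg_{\cT[j]}w\bigr)^{2}\le\int_{\cT[j]}\bigl(w-\avg_{P_{j}}\tilde w\bigr)^{2}\le(1+\varepsilon_{m})\int_{P_{j}}\bigl(\tilde w-\avg_{P_{j}}\tilde w\bigr)^{2}\dd\rx\dd\ry .
\]
Combining the two displays with Lemma \ref{lem:poincare} yields the claim with
\[
c=c_{0}\,\frac{1-\varepsilon_{m}}{(1+\varepsilon_{m})^{2}}\ge\frac{c_{0}}{4}
\qquad\text{for }m\ge m_{0},
\]
and $c_{0}=\min(k,\ell)^{2}/\Lambda^{2}$ depends only on $k$ and $\ell$.

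There is no real obstacle. The only points needing care are that the chart preserves $\cG$-invariance, which follows from the two equivariances, and that the mismatch between the flat mean and the $g$-mean is handled by the minimisation property above rather than by estimating the difference of the means.
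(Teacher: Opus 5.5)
Your proposal is correct and follows the paper's proof essentially step by step. Like the paper, you transport $w$ to $P_{j}$ through the equivariant chart and transfer the Dirichlet integral and area element via \eqref{eq:comp} and Lemma \ref{lem:transfer}. You also handle the mismatch of means by the minimising property of $\avg_{\cT[j]}w$ and apply Lemma \ref{lem:poincare}, ending with the $m$-independent constant $c=c_{0}/4$, which your final inequality supplies.
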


\begin{proof}
Let $w'$ be $w$ transported to $P_{j}$ by the chart of \eqref{eq:decomp}; it is $\cG$-invariant.
The mean $\avg_{\cT[j]}w$ minimises $\int_{\cT[j]}(w-\kappa)^{2}$ over constants $\kappa$. By
\eqref{eq:comp} and Lemma \ref{lem:transfer}, $dA\le(1+\varepsilon_{m})\,d\rx\,d\ry$ on $\cT[j]$ and
$\int_{P_{j}}|\nabla w'|^{2}\dd\rx\dd\ry\le\frac{1+\varepsilon_{m}}{1-\varepsilon_{m}}\int_{\cT[j]}|\nabla w|^{2}$,
so with Lemma \ref{lem:poincare}
\begin{align*}
\int_{\cT[j]}\bigl(w-\avg_{\cT[j]}w\bigr)^{2}&\le\int_{\cT[j]}\bigl(w-\avg_{P_{j}}w'\bigr)^{2}
\le(1+\varepsilon_{m})\int_{P_{j}}\bigl(w'-\avg_{P_{j}}w'\bigr)^{2}\dd\rx\dd\ry\\
&\le\frac{1+\varepsilon_{m}}{c_{0}m^{2}}\int_{P_{j}}|\nabla w'|^{2}\dd\rx\dd\ry
\le\frac{(1+\varepsilon_{m})^{2}}{(1-\varepsilon_{m})\,c_{0}m^{2}}\int_{\cT[j]}|\nabla w|^{2}
\le\frac{4}{c_{0}m^{2}}\int_{\cT[j]}|\nabla w|^{2},
\end{align*}
the last step because $\varepsilon_{m}=3m^{-2}\le\frac13$ for $m\ge3$, whence
$(1+\varepsilon_{m})^{2}/(1-\varepsilon_{m})\le\frac{8}{3}$. This is the claim with $c=c_{0}/4$.
\end{proof}

\begin{lemma}[Aggregate trace estimate]\label{lem:trace}
There is $K=K[k,\ell]$ such that for $m\ge m_{0}$, every $j$ and every $u\in H^{1}(\cT[j])^{\cG}$,
\[
\sum_{\Gamma}\Bigl(\avg_{\Gamma}u-c_{j}\Bigr)^{2}\ \le\ K\int_{\cT[j]}|\nabla u|^{2},
\qquad c_{j}=\avg_{\cT[j]}u,
\]
the sum running over the $M_{j}$ end circles $\Gamma$ of the tunnels adjoining $\cT[j]$.
\end{lemma}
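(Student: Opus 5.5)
Transfer to the flat perforated torus $P=P_{j}$ by the chart of \eqref{eq:decomp}, extend by the operator $E$ of Lemma \ref{lem:extension}, and use the fact that $\cG$-invariance forces all the circle averages of one lattice family to be equal. Let $w$ be $u$ transported to $P$; it is $\cG$-invariant. By Lemma \ref{lem:transfer} and \eqref{eq:comp}, $\int_{P}|\nabla w|^{2}\dd\rx\dd\ry\le2\int_{\cT[j]}|\nabla u|^{2}$. By Lemma \ref{lem:initialtor}, $\avg_{\Gamma}u$ is the polar-angle mean of $w$ over $\partial B(p,R)$.

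The centres $Z_{j}$ split into at most two $\cG$-orbits, $\mathsf L_{0}$ and $\mathsf L_{1}$: the translations by $(2X,0)$ and $(0,2Y)$ act transitively on each lattice. Since $w$ is invariant and the circle means are equivariant, $\avg_{\Gamma}u$ takes a single value $\beta_{0}$ on the circles about $\mathsf L_{0}$ and a single value $\beta_{1}$ on those about $\mathsf L_{1}$. The left side is therefore at most $M\bigl((\beta_{0}-c_{j})^{2}+(\beta_{1}-c_{j})^{2}\bigr)$. So it suffices to show $(\beta_{\epsilon}-c_{j})^{2}\le K M^{-1}\int_{P}|\nabla w|^{2}$. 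In other words, the deviation of one circle mean from the global mean is controlled by the energy of a single fundamental cell, which is $1/M$ of the total energy.

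\textbf{Local trace on one cell.} Work on the cell $Q=p+[-X,X]\times[-Y,Y]$ about a centre $p\in\mathsf L_{\epsilon}$. By \eqref{eq:ratio}, $R/X$ and $R/Y$ are bounded below by a constant depending only on $k,\ell$, and since $k\le\ell$ the aspect ratio $Y/X=k/\ell$ is fixed. Rescaling $Q$ by $m$ thus gives a fixed domain, depending only on $k,\ell$, with finitely many holes of fixed radius. On this fixed Lipschitz domain $Q\cap P$, the trace inequality plus the Poincaré inequality give
\[
\Bigl(\avg_{\partial B(p,R)}w-\avg_{Q\cap P}w\Bigr)^{2}\le C[k,\ell]\int_{Q\cap P}|\nabla w|^{2}.
\]
This bound is scale invariant in two dimensions: the mean difference is dimensionless and the Dirichlet integral is conformally invariant. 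By invariance under the translations, $\int_{Q\cap P}|\nabla w|^{2}=M^{-1}\int_{P}|\nabla w|^{2}$. The cells tile $\Tfl$, and there are $M$ of them.

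\textbf{From the cell mean to the global mean.} The translation invariance also gives $\avg_{Q\cap P}w=\avg_{P}w$, because each cell carries the same function up to isometry. Equivalently, $w$ descends to the quotient torus $\bR^{2}/(2X\bZ\times2Y\bZ)$ with holes, and $Q\cap P$ is a fundamental domain for it. It remains to compare the flat mean $\avg_{P}w$ with $c_{j}=\avg_{\cT[j]}u$, which is taken with the area of $\Sigma$. The two measures differ by a factor $1+O(m^{-2})$. Hence
\[
|c_{j}-\avg_{P}w|\le Cm^{-2}\Bigl(\avg_{P}(w-\avg_{P}w)^{2}\Bigr)^{1/2}\le Cm^{-3}\Bigl(\int_{P}|\nabla w|^{2}\Bigr)^{1/2}
\]
by Lemma \ref{lem:poincare}. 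Its square is $O(m^{-6})\int|\nabla w|^{2}$, which is at most $M^{-1}\int|\nabla w|^{2}$ for $m$ large. Combining the pieces gives $K=K[k,\ell]$.

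\textbf{Main obstacle.} The delicate point is that the local trace constant is uniform in $m$. This needs the fixed ratio $R/(2Y)$, which is what \eqref{eq:ratio} provides. It also needs care with the odd-layer geometry when $Z_{j}=\mathsf L_{0}\cup\mathsf L_{1}$, where the cell contains a second hole at its corners. If the cell domain is awkward, I would first try the annulus $\mathcal A_{p}=B(p,2R)\setminus B(p,R)$ together with the cell Poincaré inequality obtained via $E$ restricted to the quotient torus, as in Lemma \ref{lem:poincare}.
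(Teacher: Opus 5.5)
Your proof is correct, but it uses the symmetry in a different place than the paper does.

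The paper's route:
\begin{itemize}
\item It works on the Voronoi cells of $Z_{j}$, each containing exactly one hole.
\item On each cell it proves the scale-invariant bound $(\avg_{\gamma_{p}}u'-\avg_{Q_{p}}u')^{2}\le K_{0}\int_{Q_{p}}|\nabla u'|^{2}$. This holds for arbitrary $H^{1}$ functions, and the paper sums it over all $p\in Z_{j}$.
\item It then controls how the cell means scatter around $c_{j}$ by Jensen: $\sum_{p}(\avg_{Q_{p}}u'-c_{j})^{2}\le|Q_{p}|^{-1}\int_{P_{j}}(u'-c_{j})^{2}$.
\item The factor $|Q_{p}|^{-1}\sim m^{2}$ is cancelled exactly by the $m^{2}$ gain of Corollary \ref{cor:poincare}. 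That is the only place where $\cG$-invariance enters.
\end{itemize}

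Your route:
\begin{itemize}
\item You use that the lattice translations act transitively on $\mathsf L_{0}$ and on $\mathsf L_{1}$.
\item Consequently all circle means of one family coincide, every rectangular cell mean equals $\avg_{P}w$, and each cell carries exactly $M^{-1}$ of the energy.
\item The sum therefore collapses to $M$ copies of a single-cell estimate.
\item The only leftover term is the gap between the flat mean and the $\Sigma$-mean $c_{j}$. This gap is of relative size $O(m^{-2})$ and is absorbed with room to spare.
\item Because of that $m^{-2}$ factor, even an $m$-independent Poincar\'e constant would suffice in your last step. The paper's argument, by contrast, genuinely needs the $m^{2}$ gain.
\end{itemize}

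What each route buys: yours is more elementary in its second half, and it makes clear that the trace constant is a one-cell quantity. The paper's route needs no transitivity on the holes, only uniform cell geometry plus the invariant Poincar\'e inequality. That matters if one wants to verify (D6) for constructions in which $\cG$ does not act transitively on the channels of an edge (cf.\ Remark \ref{rem:decomp}).
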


\begin{proof}
Let $u'$ be $u$ transported to $P_{j}$. For $p\in Z_{j}$ let $V_{p}$ be the Voronoi cell of $p$ with
respect to $Z_{j}$ in $\Tfl$. Since $Z_{j}$ is a lattice ($\mathsf L_{0}$, $\mathsf L_{1}$, or
$\mathsf L_{0}\cup\mathsf L_{1}$), the cells $V_{p}$ are translates of one convex polygon and tile $\Tfl$
with disjoint interiors; since distinct points of $Z_{j}$ are at distance at least $Y>2R$, the closed
disc $\overline{B(p,R)}$ lies in the interior of $V_{p}$. Thus $Q_{p}:=V_{p}\setminus B(p,R)$ is a
Lipschitz domain, all the $Q_{p}$ are translates of one domain $Q$, and after rescaling by $m$ the
domain $Q$ becomes one of two reference domains depending only on $k$ and $\ell$. On a bounded
Lipschitz domain the trace theorem and the Poincar\'e inequality give, for the circle
$\gamma=\partial B(p,R)$ with the mean taken with respect to the polar angle,
\[
\Bigl(\avg_{\gamma}v-\avg_{Q_{p}}v\Bigr)^{2}\le K_{0}\int_{Q_{p}}|\nabla v|^{2}\dd\rx\dd\ry
\qquad\text{for all }v\in H^{1}(Q_{p}),
\]
and both sides are invariant under dilations of the plane, so $K_{0}=K_{0}[k,\ell]$ is independent
of $m$. By Convention \ref{conv:avg}, $\avg_{\Gamma}u=\avg_{\gamma}u'$ for the end circle $\Gamma$
over $p$. Summing over $p\in Z_{j}$ and using \eqref{eq:comp} with Lemma \ref{lem:transfer},
\[
\sum_{p}\Bigl(\avg_{\gamma_{p}}u'-\avg_{Q_{p}}u'\Bigr)^{2}\le K_{0}\int_{P_{j}}|\nabla u'|^{2}\dd\rx\dd\ry
\le3K_{0}\int_{\cT[j]}|\nabla u|^{2}.
\]
On the other hand, by Jensen's inequality $|Q_{p}|\bigl(\avg_{Q_{p}}u'-c_{j}\bigr)^{2}\le\int_{Q_{p}}(u'-c_{j})^{2}$,
and $|Q_{p}|=|P_{j}|/M_{j}\ge\pi^{2}/(2k\ell m^{2})$, so by \eqref{eq:comp} and Corollary
\ref{cor:poincare}
\[
\sum_{p}\Bigl(\avg_{Q_{p}}u'-c_{j}\Bigr)^{2}\le\frac{2k\ell m^{2}}{\pi^{2}}\int_{P_{j}}(u'-c_{j})^{2}\dd\rx\dd\ry
\le\frac{3k\ell m^{2}}{\pi^{2}}\int_{\cT[j]}(u-c_{j})^{2}\le\frac{3k\ell}{\pi^{2}c}\int_{\cT[j]}|\nabla u|^{2}.
\]
Since $(x+y)^{2}\le2x^{2}+2y^{2}$, the lemma follows with $K=6K_{0}+6k\ell/(\pi^{2}c)$.
\end{proof}

\subsection{Mass on a channel}

The following lemma is the only place where the geometry of a tunnel enters beyond its conformal
length. It is stated for an abstract channel, since it will be used in that form in
Theorem \ref{thm:abstract}; the hypotheses are satisfied by the tunnels of a stacked Clifford torus
with $\rho=\tau_{i}\cosh t$ and $r=R$, as verified in the proof of Proposition \ref{prop:graph}.

\begin{lemma}[Mass on a channel]\label{lem:tunnelmass}
There is an absolute constant $C_{0}$ with the following property. Let $a\ge1$, $r>0$ and
$\varepsilon\in(0,\tfrac12]$, let $\rho:[-a,a]\to(0,\infty)$ satisfy
\begin{equation}\label{eq:rhodecay}
\rho(t)\ \le\ 2r\,e^{-(a-|t|)}\qquad(|t|\le a),
\end{equation}
and let $g$ be a Riemannian metric on $\bK_{a}$ with
$(1-\varepsilon)\rho^{2}\widehat\chi\le g\le(1+\varepsilon)\rho^{2}\widehat\chi$ and with
$|\cK^{\pm}|_{g}\le2\pi r^{2}$, where $\cK^{\pm}:=\{\pm t\ge0\}$ and $\Gamma_{\pm}:=\{t=\pm a\}$. Then
for every $u\in H^{1}(\bK_{a})$, writing
$\bar u_{\pm}:=\avg_{\Gamma_{\pm}}u$ and $\cE_{\pm}:=\int_{\cK^{\pm}}|\nabla u|^{2}$,
\[
\int_{\cK^{\pm}}\bigl(u-\bar u_{\pm}\bigr)^{2}\le C_{0}r^{2}\,\cE_{\pm},
\qquad
\Bigl|\int_{\cK^{\pm}}u^{2}-\bigl|\cK^{\pm}\bigr|\,\bar u_{\pm}^{2}\Bigr|
\le C_{0}r^{2}\Bigl(\cE_{\pm}+|\bar u_{\pm}|\,\cE_{\pm}^{1/2}\Bigr),
\]
all integrals and norms on the left being taken with respect to $g$. One may take $C_{0}=81$.
\end{lemma}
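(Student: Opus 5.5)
The plan is to treat $\cK^{+}$; the half $\cK^{-}$ follows by $t\mapsto-t$. Put $s:=a-t\in[0,a]$, so $\Gamma_{+}=\{s=0\}$, and \eqref{eq:rhodecay} reads $\rho^{2}\le4r^{2}e^{-2s}$. Two reductions from the comparison $(1-\varepsilon)\rho^{2}\widehat\chi\le g\le(1+\varepsilon)\rho^{2}\widehat\chi$ with $\varepsilon\le\frac12$, both by Lemma \ref{lem:transfer}:
\begin{itemize}
\item[] (a) $dA_{g}\le\frac32\,\rho^{2}\dd t\dd\theta$;
\item[] (b) $\cE_{+}\ge\frac13\,\widehat\cE_{+}$, where $\widehat\cE_{+}:=\int_{\cK^{+}}(u_{t}^{2}+u_{\theta}^{2})\dd t\dd\theta$. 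This uses conformal invariance of the Dirichlet integral, which removes $\rho^{2}$, together with $\frac{1-\varepsilon}{1+\varepsilon}\ge\frac13$.
\end{itemize}
It therefore suffices to prove a weighted Poincar\'e inequality for the flat cylinder with weight $\rho^{2}$:
\[
\int_{\cK^{+}}(u-\bar u_{+})^{2}\rho^{2}\dd t\dd\theta\le C r^{2}\,\widehat\cE_{+}.
\]
By density we may take $u$ smooth.

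Let $U(t):=\frac1{2\pi}\int_{0}^{2\pi}u(t,\theta)\dd\theta$ be the circle average, so that $U(a)=\bar u_{+}$. Split
\[
u-\bar u_{+}=\bigl(u-U(t)\bigr)+\bigl(U(t)-U(a)\bigr),
\]
and use $(x+y)^{2}\le2x^{2}+2y^{2}$. The two terms are handled separately.
\begin{itemize}
\item[] \emph{Angular oscillation.} Wirtinger's inequality on $\bS^{1}$ gives $\int_{0}^{2\pi}(u-U)^{2}\dd\theta\le\int_{0}^{2\pi}u_{\theta}^{2}\dd\theta$ for each $t$. Bounding $\rho^{2}\le4r^{2}$ then gives a contribution of at most $4r^{2}\widehat\cE_{+}$.
\item[] \emph{Radial mean.} By Cauchy--Schwarz, $(U(t)-U(a))^{2}\le s\int_{t}^{a}U'^{2}$, and $U'^{2}\le\frac1{2\pi}\int u_{t}^{2}\dd\theta$. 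Hence $(U(t)-U(a))^{2}\le\frac{s}{2\pi}\widehat\cE_{+}$. Integrating against $2\pi\cdot4r^{2}e^{-2s}\dd s$ and using $\int_{0}^{\infty}se^{-2s}\dd s=\frac14$ bounds this contribution by $r^{2}\widehat\cE_{+}$.
\end{itemize}
This step is the crux: the weight must decay toward the waist fast enough to absorb the linear growth $s$ of the squared mean oscillation along the channel. The exponential decay \eqref{eq:rhodecay} does exactly that, and the resulting constant is independent of $a$.

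Collecting the constants, $\int_{\cK^{+}}(u-\bar u_{+})^{2}\,dA_{g}\le\frac32\cdot2\cdot(4+1)\,r^{2}\,\widehat\cE_{+}\le45\,r^{2}\cE_{+}$. This is within $C_{0}=81$.

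For the second estimate, write
\[
\int_{\cK^{+}}u^{2}-|\cK^{+}|\,\bar u_{+}^{2}=\int_{\cK^{+}}(u-\bar u_{+})^{2}+2\bar u_{+}\int_{\cK^{+}}(u-\bar u_{+}).
\]
The first term is bounded by the first estimate. For the second, Cauchy--Schwarz and the hypothesis $|\cK^{+}|_{g}\le2\pi r^{2}$ give
\[
\Bigl|\int_{\cK^{+}}(u-\bar u_{+})\Bigr|\le(2\pi r^{2})^{1/2}(45r^{2}\cE_{+})^{1/2}\le17\,r^{2}\cE_{+}^{1/2}.
\]
Hence the cross term is at most $34\,r^{2}|\bar u_{+}|\,\cE_{+}^{1/2}$. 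Both estimates therefore hold with $C_{0}=81$; the hypothesis $a\ge1$ is not needed.
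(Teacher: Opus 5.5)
Your proof is correct, and it uses a genuinely different decomposition from the paper's.

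The paper splits $u-\bar u_{+}=\bigl(u(t,\theta)-u(a,\theta)\bigr)+\bigl(u(a,\theta)-\bar u_{+}\bigr)$, comparing $u$ with its boundary trace along each ray $\{\theta=\mathrm{const}\}$. That interior step is the same weighted one-dimensional Cauchy--Schwarz with $\int_{0}^{\infty}\sigma e^{-2\sigma}\,d\sigma=\tfrac14$ that you apply to $U$. The paper must then control the oscillation of the trace $u(a,\cdot)$ about $\bar u_{+}$. It does this on the collar $[a-1,a]\times\bS^{1}$, by a one-dimensional trace bound combined with the Neumann Poincar\'e inequality on $[0,1]\times\bS^{1}$. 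This is exactly where the hypothesis $a\ge1$ is used, and this boundary term is what pushes the paper's constant to $81$.

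You instead split at each height into the circle mean $U(t)$ and the oscillation $u-U(t)$. Wirtinger's inequality handles the oscillation using only $\rho\le2r$, with no decay and no trace estimate. The exponential decay is spent only on the mean, as in the paper's interior step.

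Your route buys three things:
\begin{enumerate}
\item a shorter argument with no collar;
\item removal of the hypothesis $a\ge1$, so the lemma holds for every $a>0$, as you correctly note;
\item the better constant $45$.
\end{enumerate}
You could sharpen the constant further. Since $\rho$ depends only on $t$ and $\int_{0}^{2\pi}(u-U)\,d\theta=0$, your two pieces are orthogonal in $L^{2}(\rho^{2}\,dt\,d\theta)$. The factor $2$ from $(x+y)^{2}\le2x^{2}+2y^{2}$ can therefore be dropped.

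The paper's ray-by-ray comparison with boundary values is the more customary route, but for this lemma it gives nothing that yours lacks.
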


\begin{proof}
We treat $\cK^{+}=\{0\le t\le a\}$, the other half being identical. By
density we may take $u$ smooth. Put
$\cE_{0}:=\int_{\cK^{+}}(u_{t}^{2}+u_{\theta}^{2})\dd t\dd\theta$; by Lemma
\ref{lem:transfer}, the conformal invariance of the Dirichlet integral in dimension two and
$\tfrac{1+\varepsilon}{1-\varepsilon}\le3$, we have $\cE_{0}\le3\cE_{+}$, and
$dA_{g}\le(1+\varepsilon)\rho^{2}\dd t\dd\theta\le\tfrac32\rho^{2}\dd t\dd\theta$. By
\eqref{eq:rhodecay},
\begin{equation}\label{eq:expweight}
\rho^{2}\le4r^{2}e^{-2(a-t)}\qquad(0\le t\le a),
\qquad \int_{0}^{a}\rho^{2}\dd t\le2r^{2}.
\end{equation}
\emph{Interior.} For each $\theta$, $(u(t,\theta)-u(a,\theta))^{2}\le(a-t)\int_{t}^{a}u_{s}^{2}\dd s$,
so by \eqref{eq:expweight} and Fubini,
\[
\int_{0}^{a}\bigl(u(t,\theta)-u(a,\theta)\bigr)^{2}\rho^{2}\dd t
\le4r^{2}\int_{0}^{a}u_{s}(s,\theta)^{2}\Bigl(\int_{0}^{s}(a-t)e^{-2(a-t)}\dd t\Bigr)\dd s
\le r^{2}\int_{0}^{a}u_{s}(s,\theta)^{2}\dd s ,
\]
because $\int_{0}^{\infty}\sigma e^{-2\sigma}\dd\sigma=\tfrac14$.

\emph{Boundary.} Let $\mathcal A:=[a-1,a]\times\bS^{1}$ (recall $a\ge1$) and
$\bar u_{\mathcal A}:=\avg_{\mathcal A}u$ with respect to $dt\,d\theta$, and put $v:=u-\bar u_{\mathcal A}$.
For $t\in[a-1,a]$, $v(a,\theta)=v(t,\theta)+\int_{t}^{a}v_{s}\dd s$; integrating in $t$ over
$[a-1,a]$ and applying the Cauchy--Schwarz inequality,
$v(a,\theta)^{2}\le2\int_{a-1}^{a}v(t,\theta)^{2}\dd t+2\int_{a-1}^{a}v_{s}(s,\theta)^{2}\dd s$.
Integrating in $\theta$ and using the Poincar\'e inequality $\int_{\mathcal A}v^{2}\le\int_{\mathcal A}|\nabla v|^{2}$
on the flat cylinder $[0,1]\times\bS^{1}$, whose first nonzero Neumann eigenvalue is $1$, we obtain
$\int_{0}^{2\pi}v(a,\theta)^{2}\dd\theta\le4\int_{\mathcal A}|\nabla u|^{2}\dd t\dd\theta\le4\cE_{0}$.
Since $\bar u_{+}$ is the mean of $u(a,\cdot)$, $\int_{0}^{2\pi}(u(a,\theta)-\bar u_{+})^{2}\dd\theta
\le\int_{0}^{2\pi}v(a,\theta)^{2}\dd\theta\le4\cE_{0}$.

\emph{Conclusion.} By $(x+y)^{2}\le2x^{2}+2y^{2}$, the interior estimate integrated in $\theta$, and
the boundary estimate with \eqref{eq:expweight},
\[
\int_{\cK^{+}}(u-\bar u_{+})^{2}\rho^{2}\dd t\dd\theta
\le2r^{2}\cE_{0}+2\Bigl(\int_{0}^{a}\rho^{2}\dd t\Bigr)\int_{0}^{2\pi}\bigl(u(a,\theta)-\bar u_{+}\bigr)^{2}\dd\theta
\le18r^{2}\cE_{0},
\]
hence $\int_{\cK^{+}}(u-\bar u_{+})^{2}\dd A\le27r^{2}\cE_{0}\le81r^{2}\cE_{+}$. For the second
inequality, expand $u^{2}=\bar u_{+}^{2}+2\bar u_{+}(u-\bar u_{+})+(u-\bar u_{+})^{2}$ and estimate the
middle term by the Cauchy--Schwarz inequality, using
$|\cK^{+}|^{1/2}\le(2\pi r^{2})^{1/2}\le3r$:
$2|\bar u_{+}|\,|\cK^{+}|^{1/2}\bigl(81r^{2}\cE_{+}\bigr)^{1/2}\le54r^{2}|\bar u_{+}|\cE_{+}^{1/2}$. Both
inequalities hold with $C_{0}=81$.
\end{proof}

\subsection{The weighted graph model}

\begin{definition}\label{def:graph}
Let $\mathsf G=(V,E)$ be a finite connected graph without loops, with $|V|\ge2$, each edge $e\in E$ carrying an orientation with tail $e^{-}$ and head $e^{+}$. For weights $\widetilde A_{v}>0$ $(v\in V)$ and $C_{e}>0$ $(e\in E)$ set
\[
\mu(\widetilde A,C):=\min\left\{\frac{\sum_{e\in E}C_{e}\bigl(c_{e^{+}}-c_{e^{-}}\bigr)^{2}}{\sum_{v\in V}\widetilde A_{v}c_{v}^{2}}
\ :\ c\in\bR^{V}\setminus\{0\},\ \sum_{v\in V}\widetilde A_{v}c_{v}=0\right\},
\]
the lowest Rayleigh quotient of $\mathsf G$ with vertex weights $\widetilde A_{v}$ and edge
weights $C_{e}$ on functions orthogonal to the constants. The quotient does not depend on the choice
of orientations. The minimum exists, since the quotient is
homogeneous and continuous on the unit sphere of the hyperplane $\sum_{v}\widetilde A_{v}c_{v}=0$,
and it is positive: the numerator vanishes only on the constant vectors, $\mathsf G$ being connected,
and the only constant vector in that hyperplane is $0$, since $\sum_{v}\widetilde A_{v}>0$.

For $\mathsf G=P_{N}$ the path on $V=\{1,\dots,N\}$, with $e_{i}=\{i,i+1\}$ oriented from $i$ to
$i+1$, the numerator is $\sum_{i=1}^{N-1}C_{i}(c_{i}-c_{i+1})^{2}$; this is the case used in
Theorems A and B.
\end{definition}

\begin{lemma}[Evaluation of the model]\label{lem:mu}
Let $\widetilde A_{j}$ and $C_{i}$ be as in \eqref{eq:weights} and Definition \ref{def:conductance},
and let $C_{N}=8\pi^{2}/(2-b_{2}[N])$. Then, for $m\ge m_{0}$,
\[
\mu(\widetilde A,C)=\frac{C_{N}}{2\pi^{2}}\,\lambda_{1}\bigl(L(P_{N})\bigr)\bigl(1+O(m^{-2})\bigr)
=4\bigl(1+O(m^{-2})\bigr);
\]
more precisely, writing $\widetilde A_{j}=2\pi^{2}(1+\alpha_{j})$ and $C_{i}=C_{N}(1+\beta_{i})$
with $|\alpha_{j}|,|\beta_{i}|\le\eta$, one has $4(1-2\eta)\le\mu(\widetilde A,C)\le4(1+4\eta)$
whenever $\eta\le\tfrac1{16}$, so that in particular $\mu(\widetilde A,C)\le5$ for $m\ge m_{0}$.
\end{lemma}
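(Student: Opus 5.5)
The plan is to first compute the unperturbed model exactly, and then show that relative perturbations of size $\eta$ in the weights move $\mu$ by a relative amount $O(\eta)$.

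\emph{Unperturbed model.} Take $\widetilde A_{j}=2\pi^{2}$ for all $j$ and $C_{i}=C_{N}$ for all $i$. The constraint $\sum_{j}\widetilde A_{j}c_{j}=0$ then says that $c$ has mean zero. The quotient becomes $(C_{N}/2\pi^{2})\,\langle L(P_{N})c,c\rangle/|c|^{2}$. Its minimum over mean-zero vectors is $(C_{N}/2\pi^{2})\lambda_{1}(L(P_{N}))$, attained at $c^{(1)}$ (Lemma \ref{lem:pathspec}). By Proposition \ref{prop:b2}, $\lambda_{1}(L(P_{N}))=2-b_{2}[N]$, so this value equals $8\pi^{2}/(2\pi^{2})=4$.

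\emph{Perturbation.} Write $\widetilde A_{j}=2\pi^{2}(1+\alpha_{j})$ and $C_{i}=C_{N}(1+\beta_{i})$ with $|\alpha_{j}|,|\beta_{i}|\le\eta$. Lemma \ref{lem:mass} and Proposition \ref{prop:conductance} give $\eta=O(m^{-2})$. The numerator satisfies $(1-\eta)Q_{0}(c)\le Q(c)\le(1+\eta)Q_{0}(c)$, where $Q_{0}(c)=C_{N}\sum_{i}(c_{i}-c_{i+1})^{2}$. The denominator lies between $2\pi^{2}(1\pm\eta)|c|^{2}$. The only subtlety is that the constraint hyperplanes differ. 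I would handle this with the shift-invariance of the numerator: the min-max characterisation of the second eigenvalue of the pencil $(Q,\widetilde A)$ gives
\[
\mu(\widetilde A,C)=\min_{c\ \text{nonconstant}}\frac{Q(c)}{\min_{\kappa}\sum_{j}\widetilde A_{j}(c_{j}-\kappa)^{2}},
\]
since $Q(c-\kappa\mathbf 1)=Q(c)$ and the inner minimum is attained at the weighted mean.

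\emph{Bounds from this formula.} For the lower bound, note that $\min_{\kappa}\sum_{j}\widetilde A_{j}(c_{j}-\kappa)^{2}\le(1+\eta)\,2\pi^{2}\min_{\kappa}|c-\kappa\mathbf 1|^{2}$. Hence $\mu\ge\frac{1-\eta}{1+\eta}\cdot4\ge4(1-2\eta)$. For the upper bound, the same comparison in the other direction gives $\mu\le\frac{1+\eta}{1-\eta}\cdot4$. For $\eta\le\frac1{16}$ we have $\frac{1+\eta}{1-\eta}\le1+\frac{2\eta}{1-\eta}\le1+4\eta$. The consequence $\mu\le5$ follows from $4(1+4\eta)\le5$ once $\eta\le\frac1{16}$, which holds for $m\ge m_{0}$.

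\emph{Main obstacle.} The only delicate step is justifying the weighted-mean reformulation, which takes care of the changing orthogonality constraint. It is elementary: the weighted mean minimises the denominator over constant shifts, and the numerator is invariant under such shifts. The combination $C_{N}\lambda_{1}/2\pi^{2}=4$ then follows directly from Proposition \ref{prop:b2}.
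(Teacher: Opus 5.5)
Your proposal is correct, and it differs from the paper's proof in how it handles the mismatch between the weighted constraint $\sum_j\widetilde A_jc_j=0$ and the unweighted one.

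\textbf{How the paper proceeds.} The paper keeps the constraint and estimates the discrepancy directly, using a different argument for each bound.
\begin{itemize}
\item Lower bound: for an admissible $c$ it bounds the unweighted mean by $N\bar c^{2}\le\eta^{2}\|c\|^{2}$, which gives $c^{\mathsf T}Lc\ge\lambda(1-\eta^{2})\|c\|^{2}$ and hence $\mu\ge4(1-\eta)^{2}$.
\item Upper bound: it shifts the test vector $c^{(1)}$ by its weighted mean $\kappa$ and controls $\kappa^{2}\sum_j\widetilde A_j$, which gives $\mu\le4\frac{1+\eta}{1-2\eta}$.
\end{itemize}

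\textbf{How your route differs.} You rewrite $\mu$ as the minimum over nonconstant $c$ of the numerator divided by the weighted variance $\min_\kappa\sum_j\widetilde A_j(c_j-\kappa)^{2}$. This reformulation is valid in both directions:
\begin{itemize}
\item a nonzero vector in the constraint hyperplane is nonconstant and has weighted mean $0$;
\item any nonconstant vector, shifted by its weighted mean, is a nonzero admissible vector with the same numerator.
\end{itemize}
The constraint then disappears, and $\mu$ becomes monotone in the weights: nondecreasing in the $C_i$ and nonincreasing in the $\widetilde A_j$. Both bounds follow by a single comparison:
\[
4\,\frac{1-\eta}{1+\eta}\ \le\ \mu(\widetilde A,C)\ \le\ 4\,\frac{1+\eta}{1-\eta}.
\]
These are slightly sharper than the paper's constants, since $\frac{1-\eta}{1+\eta}\ge(1-\eta)^{2}$ and $\frac{1+\eta}{1-\eta}\le\frac{1+\eta}{1-2\eta}$. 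They also imply the stated forms $4(1-2\eta)$ and $4(1+4\eta)$ for all $\eta\le\frac12$.

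\textbf{What each route buys.} The paper's upper bound exhibits an explicit near-minimiser, the shifted Fiedler vector $c^{(1)}$. Its shift-by-the-weighted-mean device is the same one reused in Step 4 of Theorem \ref{thm:abstract}. The lemma itself does not need either feature, so your monotonicity argument is the more economical proof here.
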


\begin{proof}
By Proposition \ref{prop:b2}, $\lambda:=\lambda_{1}(L(P_{N}))=2-b_{2}[N]$, so
$C_{N}\lambda/(2\pi^{2})=4$. By Lemma \ref{lem:mass} and Proposition \ref{prop:conductance} the
numbers $\alpha_{j}$, $\beta_{i}$ satisfy $|\alpha_{j}|,|\beta_{i}|\le\eta:=Cm^{-2}$, and
$\eta\le\tfrac1{16}$ for $m\ge m_{0}$. Write $\|c\|$ for the Euclidean norm,
$\bar c:=N^{-1}\sum_{j}c_{j}$, and note $c^{\mathsf T}L(P_{N})c=\sum_{i}(c_{i}-c_{i+1})^{2}$.

\emph{Lower bound.} Let $c\ne0$ with $\sum_{j}\widetilde A_{j}c_{j}=0$. Then
$\sum_{j}c_{j}=-\sum_{j}\alpha_{j}c_{j}$, so $|\sum_{j}c_{j}|\le\eta\sqrt N\|c\|$ and
$N\bar c^{2}\le\eta^{2}\|c\|^{2}$. Since $c-\bar c\mathbf 1$ is orthogonal to the constants,
Lemma \ref{lem:pathspec}(ii) gives $c^{\mathsf T}Lc=(c-\bar c\mathbf1)^{\mathsf T}L(c-\bar c\mathbf1)
\ge\lambda\|c-\bar c\mathbf1\|^{2}=\lambda(\|c\|^{2}-N\bar c^{2})\ge\lambda(1-\eta^{2})\|c\|^{2}$.
Hence
\[
\frac{\sum_{i}C_{i}(c_{i}-c_{i+1})^{2}}{\sum_{j}\widetilde A_{j}c_{j}^{2}}
\ge\frac{C_{N}(1-\eta)\lambda(1-\eta^{2})}{2\pi^{2}(1+\eta)}=4\,\frac{(1-\eta)(1-\eta^{2})}{1+\eta}=4(1-\eta)^{2}\ge4(1-2\eta).
\]
\emph{Upper bound.} Let $c^{0}:=c^{(1)}$ be the eigenvector of Lemma \ref{lem:pathspec}(ii), which
satisfies $\sum_{j}c^{0}_{j}=0$ and $c^{0\mathsf T}Lc^{0}=\lambda\|c^{0}\|^{2}$, and put
$c:=c^{0}-\kappa\mathbf1$ with $\kappa:=\sum_{j}\widetilde A_{j}c^{0}_{j}/\sum_{j}\widetilde A_{j}$, so that
$\sum_{j}\widetilde A_{j}c_{j}=0$. Since $\sum_{j}\widetilde A_{j}c^{0}_{j}=2\pi^{2}\sum_{j}\alpha_{j}c^{0}_{j}$ and
$\sum_{j}\widetilde A_{j}\ge2\pi^{2}N(1-\eta)$, we have $|\kappa|\le\eta\sqrt N\|c^{0}\|/(N(1-\eta))\le2\eta\|c^{0}\|/\sqrt N$,
hence $\kappa^{2}\sum_{j}\widetilde A_{j}\le4\eta^{2}N^{-1}\|c^{0}\|^{2}\cdot2\pi^{2}N(1+\eta)\le12\pi^{2}\eta^{2}\|c^{0}\|^{2}$,
and the differences $c_{i}-c_{i+1}$ are those of $c^{0}$. Since
$\sum_{j}\widetilde A_{j}c_{j}^{2}=\sum_{j}\widetilde A_{j}(c^{0}_{j})^{2}-\kappa^{2}\sum_{j}\widetilde A_{j}
\ge2\pi^{2}(1-\eta-6\eta^{2})\|c^{0}\|^{2}\ge2\pi^{2}(1-2\eta)\|c^{0}\|^{2}$, we obtain
\[
\mu(\widetilde A,C)\le\frac{C_{N}(1+\eta)\lambda\|c^{0}\|^{2}}{2\pi^{2}(1-2\eta)\|c^{0}\|^{2}}
=4\,\frac{1+\eta}{1-2\eta}\le4(1+4\eta),
\]
the last inequality because $(1+\eta)\le(1+4\eta)(1-2\eta)$ for $\eta\le\tfrac18$. Finally
$4(1+4\eta)\le5$ for $\eta\le\tfrac1{16}$.
\end{proof}

\subsection{Graph decompositions}\label{ss:decomp}

We now isolate the properties of $\Sigma$ which the reduction uses. All of them have been established above for stacked Clifford tori, and the verification is carried out in the proof of Proposition \ref{prop:graph}.

\begin{definition}[$\cG$-graph decomposition]\label{def:decomp}
Let $(\Sigma,g)$ be a closed connected surface, $\cG$ a finite group of isometries of $(\Sigma,g)$, and $\mathsf G=(V,E)$ a finite connected graph without loops, with $|V|\ge2$, each edge $e\in E$ carrying an orientation with tail $e^{-}$ and head $e^{+}$. Suppose given integers $M_{e}\ge1$, real numbers
$a_{e}\ge1$ and $r_{e}>0$, functions $\rho_{e}:[-a_{e},a_{e}]\to(0,\infty)$, and constants
$\varepsilon\in(0,\tfrac12]$, $\eta\in[0,1]$, $\mathsf P>0$ and $\mathsf K>0$. A
\emph{$\cG$-graph decomposition of $\Sigma$ modelled on $\mathsf G$}, with these data, consists of
compact subsurfaces with boundary $\cB_{v}\subset\Sigma$ $(v\in V)$, the \emph{blocks}, and
$\cK_{e}^{(s)}\subset\Sigma$ $(e\in E$, $1\le s\le M_{e})$, the \emph{channels}, together with
diffeomorphisms $\mathfrak k_{e}^{(s)}:\bK_{a_{e}}\to\cK_{e}^{(s)}$, subject to {\rm(D1)--(D6)}
below. We transport the coordinates $(t,\theta)$ of $\bK_{a_{e}}=[-a_{e},a_{e}]\times\bS^{1}$ to
$\cK_{e}^{(s)}$ by $\mathfrak k_{e}^{(s)}$ and write
$\Gamma_{e,\pm}^{(s)}:=\mathfrak k_{e}^{(s)}(\{t=\pm a_{e}\})$ for the \emph{end circles},
$\cK_{e}^{(s),\pm}:=\mathfrak k_{e}^{(s)}(\{\pm t\ge0\})$ for the halves of a channel, and
$\avg_{\Gamma}u:=\frac1{2\pi}\int_{0}^{2\pi}u(\pm a_{e},\theta)\dd\theta$ for
$\Gamma=\Gamma_{e,\pm}^{(s)}$, in accordance with Convention \ref{conv:avg}.
\begin{enumerate}
\item[(D1)] \emph{Decomposition.} The blocks and the channels have pairwise disjoint interiors and
their union is $\Sigma$; distinct channels are disjoint; $\cK_{e}^{(s)}\cap\cB_{v}$ equals
$\Gamma_{e,-}^{(s)}$ if $v=e^{-}$, equals $\Gamma_{e,+}^{(s)}$ if $v=e^{+}$, and is empty otherwise;
and $\partial\cB_{v}$ is the disjoint union of the end circles of the channels of the edges incident
to $v$.
\item[(D2)] \emph{Channels.} For all $e$ and $s$,
\[
(1-\varepsilon)\,\rho_{e}^{2}\,\widehat\chi\ \le\ (\mathfrak k_{e}^{(s)})^{*}g\ \le\ (1+\varepsilon)\,\rho_{e}^{2}\,\widehat\chi
\quad\text{on }\bK_{a_{e}},\qquad
\rho_{e}(t)\le2r_{e}e^{-(a_{e}-|t|)}\quad(|t|\le a_{e}).
\]
\item[(D3)] \emph{Symmetry.} Every $\mathfrak g\in\cG$ maps each block $\cB_{v}$ onto itself and each
channel $\cK_{e}^{(s)}$ onto a channel $\cK_{e}^{(s')}$ of the same edge, and in the channel
coordinates $\mathfrak g$ acts by $(t,\theta)\mapsto(t,\pm\theta+\theta_{0})$ for some
$\theta_{0}\in\bR$ and some choice of sign.
\item[(D4)] \emph{Areas.} Writing $A_{v}:=|\cB_{v}|$, one has
$\bigl|\,|\cK_{e}^{(s),\pm}|-\pi r_{e}^{2}\bigr|\le\eta\,\pi r_{e}^{2}$ for all $e$, all $s$ and both
signs; and, setting $\Theta_{v}:=\sum_{e\ni v}M_{e}\pi r_{e}^{2}$, the sum being over the edges
incident to $v$, and $\widetilde A_{v}:=A_{v}+\Theta_{v}$, one has
$\Theta_{v}\le\tfrac12\widetilde A_{v}$.
\item[(D5)] \emph{Poincar\'e inequality on the blocks.} For every $v\in V$ and every $\cG$-invariant
$w\in H^{1}(\cB_{v})$,
$\int_{\cB_{v}}\bigl(w-\avg_{\cB_{v}}w\bigr)^{2}\le\mathsf P\int_{\cB_{v}}|\nabla w|^{2}$.
\item[(D6)] \emph{Aggregate trace inequality on the blocks.} For every $v\in V$ and every
$\cG$-invariant $u\in H^{1}(\cB_{v})$,
$\sum_{\Gamma}\bigl(\avg_{\Gamma}u-\avg_{\cB_{v}}u\bigr)^{2}\le\mathsf K\int_{\cB_{v}}|\nabla u|^{2}$,
the sum running over the end circles $\Gamma\subset\partial\cB_{v}$.
\end{enumerate}
We further set
\begin{equation}\label{eq:defXi}
C_{e}:=\frac{M_{e}\pi}{a_{e}},\qquad
\mathfrak a:=\sum_{e\in E}M_{e}r_{e}^{2},\qquad
\mathfrak b:=\sum_{e\in E}M_{e}^{1/2}r_{e}^{2},\qquad
\mathfrak c:=\sum_{e\in E}M_{e}^{-1/2},
\end{equation}
and we call
\begin{equation}\label{eq:Xi}
\Xi:=\varepsilon+\eta+\mathsf P+\mathfrak b+\mathfrak c
\end{equation}
the \emph{defect} of the decomposition.
\end{definition}

\begin{remark}\label{rem:decomp}
Two consequences of {\rm(D3)} will be used. First, a function which is constant on each block and
which, on the channels of each edge $e$, depends only on $t$, is $\cG$-invariant. Second, the
restriction to a block of a $\cG$-invariant function on $\Sigma$ is $\cG$-invariant, so that
{\rm(D5)} and {\rm(D6)} apply to it. Note that {\rm(D3)} does not require $\cG$ to act transitively
on the channels of an edge, and that no hypothesis is made on the metric of the blocks beyond
{\rm(D4)}--{\rm(D6)}.
\end{remark}

\subsection{The reduction theorem}\label{ss:reduction}

\begin{theorem}[Reduction of the invariant sector]\label{thm:abstract}
Let $(\Sigma,g)$ carry a $\cG$-graph decomposition modelled on $\mathsf G=(V,E)$ as in Definition
\ref{def:decomp}, with defect $\Xi$, and set
\[
a_{0}:=\min_{v\in V}\widetilde A_{v},\qquad a_{1}:=\max_{v\in V}\widetilde A_{v},\qquad
c_{1}:=\max_{e\in E}C_{e}.
\]
There are constants $C^{\ast}>0$ and $\Xi_{0}>0$, depending only on $|V|$, $|E|$, $a_{0}$, $a_{1}$,
$c_{1}$ and $\mathsf K$, such that if $\Xi\le\Xi_{0}$ then
\begin{equation}\label{eq:abstractmain}
\Bigl|\ \min\Bigl\{\cR(u):u\in H^{1}(\Sigma)^{\cG}\setminus\{0\},\ \textstyle\int_{\Sigma}u=0\Bigr\}
-\mu(\widetilde A,C)\ \Bigr|\ \le\ C^{\ast}\,\Xi .
\end{equation}
If moreover $\Sigma\subset\bS^{3}$ is a closed embedded minimal surface, $\cG<O(4)$ is generated by
reflections in great spheres, and $\mu(\widetilde A,C)>2+C^{\ast}\Xi$, then $\lambda_{1}(\Sigma)=2$.
Inspection of the proof shows that $C^{\ast}$ may be taken nondecreasing in $a_{1}$, $c_{1}$ and
$\mathsf K$ and nonincreasing in $a_{0}$, and $\Xi_{0}$ with the opposite monotonicity, so that in
applications it suffices to have a positive lower bound for the $\widetilde A_{v}$ and upper bounds
for the $\widetilde A_{v}$, the $C_{e}$ and $\mathsf K$.
\end{theorem}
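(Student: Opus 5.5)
We prove the two inequalities in \eqref{eq:abstractmain} separately and then deduce the last assertion from Lemma \ref{lem:CS}.

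\emph{Upper bound.} Let $c\in\bR^{V}$ be a minimiser for $\mu(\widetilde A,C)$. Build the test function $u_{c}$ equal to $c_{v}$ on $\cB_{v}$ and, on each channel of $e$, equal to the linear interpolation $c_{e^{-}}+(c_{e^{+}}-c_{e^{-}})(t+a_{e})/(2a_{e})$. By Remark \ref{rem:decomp} it is $\cG$-invariant and lies in $H^{1}$.

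The Dirichlet energy of $u_{c}$ is computed conformally, as in Lemma \ref{lem:channel} and Corollary \ref{cor:channel}. It is at most $\frac{1+\varepsilon}{1-\varepsilon}\sum_{e}C_{e}(c_{e^{+}}-c_{e^{-}})^{2}$.

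For the mass, $u_{c}$ deviates from $c_{e^{\pm}}$ on a half channel by at most $|c_{e^{+}}-c_{e^{-}}|\cdot|t\mp a_{e}|/a_{e}$. By the exponential decay (D2), this contributes $O(r_{e}^{2}/a_{e})$ per channel, hence $O(\mathfrak a/a_{e})\cdot|c|^{2}$ in total. Note that $\mathfrak a$ is controlled by $\Theta_{v}\le\widetilde A_{v}$, and $M_{e}/a_{e}=C_{e}/\pi\le c_{1}$.

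With (D4), $\int u_{c}^{2}=\sum_{v}\widetilde A_{v}c_{v}^{2}+O(\eta+\ldots)|c|^{2}$, since each half channel carries area $\pi r_{e}^{2}(1+O(\eta))$ at value $\approx c_{v}$. Finally, subtract the mean of $u_{c}$, which is $O(\Xi)|c|$. This gives $\min\cR\le\mu+C\Xi$.

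\emph{Lower bound; this is the main step.} Let $u$ be $\cG$-invariant with $\int u=0$. Normalise $\int u^{2}=1$ and assume $\cR(u)\le\mu+1\le c_{1}'$, so that the energy $\cE$ is bounded. Set $c_{v}:=\avg_{\cB_{v}}u$.

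\emph{Energy.} On each channel apply Corollary \ref{cor:channel} to obtain $\int_{\cK}|\nabla u|^{2}\ge(1-3\varepsilon)\frac{\pi}{a_{e}}(\avg_{\Gamma_{+}}u-\avg_{\Gamma_{-}}u)^{2}$. Write $\avg_{\Gamma}u=c_{v}+\delta_{\Gamma}$, where (D6) gives $\sum_{\Gamma}\delta_{\Gamma}^{2}\le\mathsf K\cE$. Summing over the $M_{e}$ channels,
\[
\sum_{s}(c_{e^{+}}-c_{e^{-}}+\delta_{+}^{(s)}-\delta_{-}^{(s)})^{2}\ge M_{e}(c_{e^{+}}-c_{e^{-}})^{2}-2|c_{e^{+}}-c_{e^{-}}|\,M_{e}^{1/2}\bigl(\textstyle\sum(\delta_{+}-\delta_{-})^{2}\bigr)^{1/2}.
\]
Multiplying by $\pi/a_{e}$, the cross term is bounded by $C_{e}|\Delta c|\,M_{e}^{-1/2}(2\mathsf K\cE)^{1/2}$, which is $O(\mathfrak c)$. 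This is exactly where $\mathfrak c$ enters, and why the sum over channels must be handled in aggregate rather than channel by channel. Hence $\cE\ge\sum_{e}C_{e}(\Delta_{e}c)^{2}-C\mathfrak c-C\varepsilon$.

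\emph{Mass.} On the blocks, (D5) gives $\int_{\cB_{v}}u^{2}=A_{v}c_{v}^{2}+O(\mathsf P\cE)$. On the half channels, Lemma \ref{lem:tunnelmass} gives $\int u^{2}=|\cK^{\pm}|\bar u_{\pm}^{2}+O(r_{e}^{2}(\cE_{\pm}+|\bar u_{\pm}|\cE_{\pm}^{1/2}))$, with $\bar u_{\pm}=c_{v}+\delta_{\Gamma}$. Summing over channels, the $\delta$ terms are bounded by $r_{e}^{2}(M_{e}^{1/2}|c_{v}|(\sum\delta^{2})^{1/2}+\sum\delta^{2})$, which is $O(\mathfrak b+\mathfrak a\,\mathsf K\ldots)$. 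The term $r_{e}^{2}\sum_{s}|\bar u|\cE_{s}^{1/2}$ is handled by Cauchy--Schwarz, giving $r_{e}^{2}M_{e}^{1/2}(\ldots)$, again $O(\mathfrak b)$. Together with (D4) this yields $1=\int u^{2}=\sum_{v}\widetilde A_{v}c_{v}^{2}+O(\Xi)$. In the same way, $0=\int u=\sum_{v}\widetilde A_{v}c_{v}+O(\Xi)$.

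One point needs care: the $\mathfrak a$-type error $r_{e}^{2}\sum_{s}\delta^{2}$. It is bounded by $\max_{e}r_{e}^{2}\,\mathsf K\cE\le\mathfrak b\,\mathsf K\cE$, which is harmless.

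\emph{Conclusion of the lower bound.} Project $c$ onto the $\widetilde A$-orthogonal complement of the constants. This costs $O(\Xi)$, because the vertex weights are bounded by $a_{0}$ and $a_{1}$ and the $c_{v}$ are bounded. The definition of $\mu$ then yields $\cE\ge\mu-C^{\ast}\Xi$.

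\emph{Final assertion.} If $\lambda_{1}(\Sigma)<2$, Lemma \ref{lem:CS} makes a first eigenfunction $\cG$-invariant, so $\lambda_{1}$ is at least the invariant minimum, which is at least $\mu-C^{\ast}\Xi>2$. This is a contradiction, and together with Lemma \ref{lem:takahashi} it gives $\lambda_{1}(\Sigma)=2$.

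The expected obstacle is the bookkeeping of the aggregate errors over the $M_{e}\to\infty$ channels. Individual channel errors cannot simply be summed; they must always be paired via Cauchy--Schwarz against (D6), so that only $M_{e}^{\pm1/2}$ factors survive.
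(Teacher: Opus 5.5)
Your proposal is correct and follows essentially the same approach as the paper: the same piecewise-linear test function for the upper bound, and for the lower bound the channel inequality paired with (D6) through Cauchy--Schwarz over the $M_e$ channels (which produces the $\mathfrak c$ term), Lemma \ref{lem:tunnelmass} with (D5) for the mass and the mean, and the projection onto $\sum_v\widetilde A_v c_v=0$. The differences are minor. You estimate the channel mass of the test function directly from the exponential decay in (D2), with error $O(\max_e C_e r_e^2)$ rather than the paper's $O(\mathfrak b)$. You should also state explicitly that $\mu\le\bar\mu$ for a constant depending only on $|E|$, $a_0$, $a_1$, $c_1$, for instance by testing $c=\mathbf 1_{v_0}-\kappa_0\mathbf 1$ as the paper does, since both halves of your argument tacitly use this bound.
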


\begin{proof}
Throughout the proof $C$ denotes a positive constant depending only on $|V|$, $|E|$, $a_{0}$,
$a_{1}$, $c_{1}$ and $\mathsf K$, which may change from line to line; for the duration of this proof
this supersedes Convention \ref{conv:notation}. We write $\mu:=\mu(\widetilde A,C)$,
$\cE_{\mathrm{blk}}:=\sum_{v}\int_{\cB_{v}}|\nabla u|^{2}$ and
$\cE_{\mathrm{ch}}:=\sum_{e}\sum_{s}\int_{\cK_{e}^{(s)}}|\nabla u|^{2}$ for the block and channel
parts of the Dirichlet energy of a function $u$.

\emph{Preliminaries.} Every edge is incident to exactly two vertices, so
$\sum_{v}\Theta_{v}=2\pi\mathfrak a$; with $\Theta_{v}\le\tfrac12\widetilde A_{v}$ this gives
\begin{equation}\label{eq:abprelim}
\mathfrak a\le\frac{1}{4\pi}\sum_{v}\widetilde A_{v}\le\frac{|V|a_{1}}{4\pi},\qquad
A_{v}\ge\tfrac12\widetilde A_{v}\ge\tfrac12a_{0},\qquad
|\Sigma|\ge\sum_{v}A_{v}\ge\tfrac12|V|a_{0},
\end{equation}
and, since $1\le M_{e}^{1/2}\le M_{e}$, also $\max_{e}r_{e}^{2}\le\mathfrak b\le\mathfrak a$. Next,
\begin{equation}\label{eq:mubar}
\mu\ \le\ \bar\mu:=\frac{8c_{1}|E|a_{1}^{2}}{a_{0}^{3}} .
\end{equation}
Indeed, fix $v_{0}\in V$, put $\kappa_{0}:=\widetilde A_{v_{0}}/\sum_{w}\widetilde A_{w}$ and
$c:=\mathbf 1_{v_{0}}-\kappa_{0}\mathbf 1$, so that $\sum_{v}\widetilde A_{v}c_{v}=0$ and $c\ne0$;
the numerator in Definition \ref{def:graph} is $\sum_{e\ni v_{0}}C_{e}\le2c_{1}|E|$, while, using
$1-\kappa_{0}=\sum_{w\ne v_{0}}\widetilde A_{w}\big/\sum_{w}\widetilde A_{w}\ge\frac{(|V|-1)a_{0}}{|V|a_{1}}\ge\frac{a_{0}}{2a_{1}}$,
the denominator is at least $\widetilde A_{v_{0}}(1-\kappa_{0})^{2}\ge a_{0}^{3}/(4a_{1}^{2})$.
Finally, by {\rm(D2)} and $|\cK_{e}^{(s),\pm}|\le(1+\eta)\pi r_{e}^{2}\le2\pi r_{e}^{2}$, Lemma
\ref{lem:tunnelmass} applies on every channel with $a=a_{e}$, $r=r_{e}$, $\rho=\rho_{e}$; and
Corollary \ref{cor:channel} applies on every channel and gives, for $u\in H^{1}(\Sigma)$,
\begin{equation}\label{eq:abchannel}
\int_{\cK_{e}^{(s)}}|\nabla u|^{2}\ \ge\ (1-2\varepsilon)\,\frac{\pi}{a_{e}}\,\delta_{s}^{2},
\qquad \delta_{s}:=\avg_{\Gamma_{e,+}^{(s)}}u-\avg_{\Gamma_{e,-}^{(s)}}u ,
\end{equation}
since $\frac{1-\varepsilon}{1+\varepsilon}\ge1-2\varepsilon$. We shall also use
$\frac{1+\varepsilon}{1-\varepsilon}\le1+4\varepsilon\le3$ for $\varepsilon\le\tfrac12$, and we
agree that $\Xi_{0}\le1$, so that $\Xi^{2}\le\Xi$ whenever $\Xi\le\Xi_{0}$.

\emph{Upper bound.} Let $c\in\bR^{V}$ realise $\mu$, normalised by
$\sum_{v}\widetilde A_{v}c_{v}^{2}=1$, so that $|c_{v}|\le\widetilde A_{v}^{-1/2}\le a_{0}^{-1/2}$;
put $d_{e}:=c_{e^{+}}-c_{e^{-}}$, so that $\sum_{e}C_{e}d_{e}^{2}=\mu$ and
$|d_{e}|\le2a_{0}^{-1/2}$. Define $u:=c_{v}$ on $\cB_{v}$ and, on each channel of the edge $e$ in
its coordinates $(t,\theta)$,
\[
u(t,\theta):=\frac{c_{e^{-}}+c_{e^{+}}}{2}+\frac{d_{e}}{2}\cdot\frac{t}{a_{e}} .
\]
Then $u=c_{e^{\pm}}$ on $\{t=\pm a_{e}\}$, so $u$ is continuous and piecewise smooth, hence in
$H^{1}(\Sigma)$; it is $\cG$-invariant by Remark \ref{rem:decomp}; $|u|\le a_{0}^{-1/2}$ and
$\avg_{\Gamma_{e,\pm}^{(s)}}u=c_{e^{\pm}}$.

On a channel of $e$ the Dirichlet integral of $u$ with respect to $\widehat\chi$ is
$(d_{e}/2a_{e})^{2}\cdot2a_{e}\cdot2\pi=\pi d_{e}^{2}/a_{e}$, so by {\rm(D2)}, Lemma
\ref{lem:transfer} and the conformal invariance of the Dirichlet integral in dimension two, the
energy $\cE_{s}$ of $u$ on that channel satisfies
$(1-2\varepsilon)\pi d_{e}^{2}/a_{e}\le\cE_{s}\le(1+4\varepsilon)\pi d_{e}^{2}/a_{e}\le3\pi d_{e}^{2}/a_{e}$,
whence, summing over $s$ and over $e$,
\begin{equation}\label{eq:abub1}
\Bigl|\int_{\Sigma}|\nabla u|^{2}-\mu\Bigr|\le4\varepsilon\sum_{e}C_{e}d_{e}^{2}=4\varepsilon\mu\le4\bar\mu\,\varepsilon,
\qquad \sum_{s}\cE_{s}\le3C_{e}d_{e}^{2}\le C .
\end{equation}
By Lemma \ref{lem:tunnelmass} with $\bar u_{\pm}=c_{e^{\pm}}$, then summing over $s$ and using
$\sum_{s}(\cE_{s}^{\pm})^{1/2}\le M_{e}^{1/2}(\sum_{s}\cE_{s})^{1/2}\le CM_{e}^{1/2}$ together with
{\rm(D4)},
\[
\Bigl|\sum_{s}\int_{\cK_{e}^{(s)}}u^{2}-M_{e}\pi r_{e}^{2}\bigl(c_{e^{-}}^{2}+c_{e^{+}}^{2}\bigr)\Bigr|
\ \le\ C\bigl(\eta M_{e}r_{e}^{2}+M_{e}^{1/2}r_{e}^{2}\bigr).
\]
Since $\int_{\cB_{v}}u^{2}=A_{v}c_{v}^{2}$ and
$\sum_{e}M_{e}\pi r_{e}^{2}(c_{e^{-}}^{2}+c_{e^{+}}^{2})=\sum_{v}\Theta_{v}c_{v}^{2}$, summing over
$e$ and using \eqref{eq:abprelim} gives
\begin{equation}\label{eq:abub2}
\Bigl|\int_{\Sigma}u^{2}-1\Bigr|=\Bigl|\int_{\Sigma}u^{2}-\sum_{v}\widetilde A_{v}c_{v}^{2}\Bigr|
\le C(\eta\mathfrak a+\mathfrak b)\le C\Xi .
\end{equation}
Likewise, by the first inequality of Lemma \ref{lem:tunnelmass} and the Cauchy--Schwarz inequality,
$\bigl|\int_{\cK_{e}^{(s),\pm}}(u-c_{e^{\pm}})\bigr|\le|\cK_{e}^{(s),\pm}|^{1/2}(C_{0}r_{e}^{2}\cE_{s}^{\pm})^{1/2}
\le3C_{0}^{1/2}r_{e}^{2}(\cE_{s}^{\pm})^{1/2}$, so that, summing over $s$ as before,
\[
\Bigl|\sum_{s}\int_{\cK_{e}^{(s)}}u-M_{e}\pi r_{e}^{2}\bigl(c_{e^{-}}+c_{e^{+}}\bigr)\Bigr|
\ \le\ C\bigl(\eta M_{e}r_{e}^{2}+M_{e}^{1/2}r_{e}^{2}\bigr);
\]
since $\int_{\cB_{v}}u=A_{v}c_{v}$ and
$\sum_{v}\widetilde A_{v}c_{v}=0$,
\begin{equation}\label{eq:abub3}
\Bigl|\int_{\Sigma}u\Bigr|=\Bigl|\int_{\Sigma}u-\sum_{v}\widetilde A_{v}c_{v}\Bigr|\le C(\eta\mathfrak a+\mathfrak b)\le C\Xi .
\end{equation}
Let $w:=u-|\Sigma|^{-1}\int_{\Sigma}u$. Then $w\in H^{1}(\Sigma)^{\cG}$, $\int_{\Sigma}w=0$,
$\int_{\Sigma}|\nabla w|^{2}=\int_{\Sigma}|\nabla u|^{2}$, and by \eqref{eq:abprelim},
\eqref{eq:abub2} and \eqref{eq:abub3},
$\int_{\Sigma}w^{2}=\int_{\Sigma}u^{2}-|\Sigma|^{-1}(\int_{\Sigma}u)^{2}\ge1-C\Xi$. Choosing
$\Xi_{0}$ small enough that $C\Xi_{0}\le\tfrac12$ we get $\int_{\Sigma}w^{2}\ge\tfrac12$, so
$w\ne0$, and with \eqref{eq:abub1},
\[
\min\Bigl\{\cR(u):u\in H^{1}(\Sigma)^{\cG}\setminus\{0\},\ \textstyle\int_{\Sigma}u=0\Bigr\}
\ \le\ \cR(w)\ \le\ \frac{\mu+4\bar\mu\varepsilon}{1-C\Xi}\ \le\ \mu+C\Xi .
\]

\emph{Lower bound.} Let $u\in H^{1}(\Sigma)^{\cG}$ with $\int_{\Sigma}u=0$ and
$\int_{\Sigma}u^{2}=1$, and put $\cE:=\int_{\Sigma}|\nabla u|^{2}=\cR(u)$. If $\cE>\bar\mu$ then
$\cE>\mu\ge\mu-C^{\ast}\Xi$ and there is nothing to prove; so assume $\cE\le\bar\mu$. Set
$c_{v}:=\avg_{\cB_{v}}u$ and $d_{e}:=c_{e^{+}}-c_{e^{-}}$. By Jensen's inequality
$\sum_{v}A_{v}c_{v}^{2}\le\sum_{v}\int_{\cB_{v}}u^{2}\le1$, so by \eqref{eq:abprelim}
$|c_{v}|\le A_{v}^{-1/2}\le(2/a_{0})^{1/2}$, $|d_{e}|\le2(2/a_{0})^{1/2}$ and
$\sum_{e}d_{e}^{2}\le8|E|/a_{0}$.

\emph{Step 1: energy.} For a channel $\cK_{e}^{(s)}$ set
$\delta_{\pm}^{(s)}:=\avg_{\Gamma_{e,\pm}^{(s)}}u-c_{e^{\pm}}$, so that $\delta_{s}$ of
\eqref{eq:abchannel} equals $d_{e}+\delta_{+}^{(s)}-\delta_{-}^{(s)}$. By {\rm(D6)}, applied to
$u|_{\cB_{e^{-}}}$ and to $u|_{\cB_{e^{+}}}$ and legitimate by Remark \ref{rem:decomp},
\begin{equation}\label{eq:abtrace}
\sum_{s}\bigl(\delta_{\pm}^{(s)}\bigr)^{2}\le\mathsf K\int_{\cB_{e^{\pm}}}|\nabla u|^{2}\le\mathsf K\cE_{\mathrm{blk}}
\le\mathsf K\bar\mu,\qquad\text{hence}\qquad
\sum_{s}\bigl(\delta_{+}^{(s)}-\delta_{-}^{(s)}\bigr)^{2}\le4\mathsf K\cE_{\mathrm{blk}} .
\end{equation}
By the Cauchy--Schwarz inequality and $2xy\le x^{2}+y^{2}$,
\[
M_{e}d_{e}^{2}-M_{e}^{1/2}\bigl(d_{e}^{2}+4\mathsf K\cE_{\mathrm{blk}}\bigr)\ \le\ \sum_{s=1}^{M_{e}}\delta_{s}^{2}
\ \le\ 2M_{e}d_{e}^{2}+8\mathsf K\cE_{\mathrm{blk}} ,
\]
so that, multiplying by $\pi/a_{e}=C_{e}/M_{e}$ and using \eqref{eq:abchannel},
\[
\sum_{s}\int_{\cK_{e}^{(s)}}|\nabla u|^{2}\ \ge\ C_{e}d_{e}^{2}
-\frac{C_{e}}{M_{e}^{1/2}}\bigl(d_{e}^{2}+4\mathsf K\cE_{\mathrm{blk}}\bigr)
-2\varepsilon\Bigl(2C_{e}d_{e}^{2}+\frac{8\mathsf KC_{e}\cE_{\mathrm{blk}}}{M_{e}}\Bigr).
\]
Summing over $e$ and using $\cE\ge\cE_{\mathrm{ch}}$, $\cE_{\mathrm{blk}}\le\bar\mu$,
$\sum_{e}d_{e}^{2}\le8|E|/a_{0}$, $C_{e}\le c_{1}$ and $M_{e}\ge1$,
\begin{equation}\label{eq:ablb1}
\cE\ \ge\ \sum_{e\in E}C_{e}d_{e}^{2}-C(\mathfrak c+\varepsilon)\ \ge\ \sum_{e\in E}C_{e}d_{e}^{2}-C\Xi .
\end{equation}

\emph{Step 2: mass.} By {\rm(D5)}, since $c_{v}$ is the mean of $u$ on $\cB_{v}$,
\[
\int_{\cB_{v}}u^{2}=A_{v}c_{v}^{2}+\int_{\cB_{v}}(u-c_{v})^{2}\le A_{v}c_{v}^{2}+\mathsf P\int_{\cB_{v}}|\nabla u|^{2}.
\]
On the channels of $e$, Lemma \ref{lem:tunnelmass} gives, with
$\bar u_{\pm}^{(s)}=c_{e^{\pm}}+\delta_{\pm}^{(s)}$ and $\cE_{s}$ the energy of $u$ on
$\cK_{e}^{(s)}$,
\[
\sum_{s}\int_{\cK_{e}^{(s)}}u^{2}\le\sum_{s}\Bigl(\bigl|\cK_{e}^{(s),-}\bigr|(\bar u_{-}^{(s)})^{2}
+\bigl|\cK_{e}^{(s),+}\bigr|(\bar u_{+}^{(s)})^{2}\Bigr)
+C_{0}r_{e}^{2}\sum_{s}\cE_{s}+C_{0}r_{e}^{2}\sum_{s}\bigl(|\bar u_{-}^{(s)}|+|\bar u_{+}^{(s)}|\bigr)\cE_{s}^{1/2}.
\]
By \eqref{eq:abtrace}, $\sum_{s}(\bar u_{\pm}^{(s)})^{2}\le2M_{e}c_{e^{\pm}}^{2}+2\mathsf K\bar\mu\le CM_{e}$,
whence by {\rm(D4)} and the Cauchy--Schwarz inequality
\[
\begin{aligned}
\sum_{s}\bigl|\cK_{e}^{(s),\pm}\bigr|(\bar u_{\pm}^{(s)})^{2}
&\le(1+\eta)\pi r_{e}^{2}\Bigl(M_{e}c_{e^{\pm}}^{2}
+2|c_{e^{\pm}}|M_{e}^{1/2}(\mathsf K\bar\mu)^{1/2}+\mathsf K\bar\mu\Bigr)\\
&\le M_{e}\pi r_{e}^{2}c_{e^{\pm}}^{2}+C\bigl(\eta M_{e}r_{e}^{2}+M_{e}^{1/2}r_{e}^{2}\bigr),
\end{aligned}
\]
while $C_{0}r_{e}^{2}\sum_{s}\cE_{s}\le C_{0}\bar\mu\,r_{e}^{2}$ and, again by Cauchy--Schwarz,
$C_{0}r_{e}^{2}\sum_{s}(|\bar u_{-}^{(s)}|+|\bar u_{+}^{(s)}|)\cE_{s}^{1/2}\le
C_{0}r_{e}^{2}(CM_{e})^{1/2}\bar\mu^{1/2}\le CM_{e}^{1/2}r_{e}^{2}$. Summing over $e$ and $v$ and
using $\sum_{e}M_{e}\pi r_{e}^{2}(c_{e^{-}}^{2}+c_{e^{+}}^{2})=\sum_{v}\Theta_{v}c_{v}^{2}$,
$\cE\le\bar\mu$ and \eqref{eq:abprelim},
\begin{equation}\label{eq:ablb2}
1=\int_{\Sigma}u^{2}\ \le\ \sum_{v}\widetilde A_{v}c_{v}^{2}+\mathsf P\bar\mu+C(\eta\mathfrak a+\mathfrak b)
\ \le\ \sum_{v}\widetilde A_{v}c_{v}^{2}+C\Xi .
\end{equation}

\emph{Step 3: mean.} By the first inequality of Lemma \ref{lem:tunnelmass} and Cauchy--Schwarz,
$\bigl|\int_{\cK_{e}^{(s),\pm}}(u-\bar u_{\pm}^{(s)})\bigr|\le3C_{0}^{1/2}r_{e}^{2}(\cE_{s}^{\pm})^{1/2}$,
so that $\sum_{s}\bigl|\int_{\cK_{e}^{(s),\pm}}(u-\bar u_{\pm}^{(s)})\bigr|\le
3C_{0}^{1/2}r_{e}^{2}M_{e}^{1/2}\bar\mu^{1/2}$; moreover, by {\rm(D4)}, \eqref{eq:abtrace} and
$\sum_{s}(\bar u_{\pm}^{(s)})^{2}\le CM_{e}$,
\[
\Bigl|\sum_{s}\bigl|\cK_{e}^{(s),\pm}\bigr|\bar u_{\pm}^{(s)}-M_{e}\pi r_{e}^{2}c_{e^{\pm}}\Bigr|
\le\pi r_{e}^{2}\Bigl|\sum_{s}\delta_{\pm}^{(s)}\Bigr|+\eta\pi r_{e}^{2}\sum_{s}\bigl|\bar u_{\pm}^{(s)}\bigr|
\le C\bigl(M_{e}^{1/2}r_{e}^{2}+\eta M_{e}r_{e}^{2}\bigr).
\]
Since $\int_{\cB_{v}}u=A_{v}c_{v}$, summing over $e$ and $v$ gives
\begin{equation}\label{eq:ablb3}
\Bigl|\sum_{v}\widetilde A_{v}c_{v}\Bigr|=\Bigl|\int_{\Sigma}u-\sum_{v}\widetilde A_{v}c_{v}\Bigr|
\le C(\eta\mathfrak a+\mathfrak b)\le C\Xi .
\end{equation}

\emph{Step 4: conclusion.} Put $\kappa:=\sum_{v}\widetilde A_{v}c_{v}\big/\sum_{v}\widetilde A_{v}$,
so that $|\kappa|\le C\Xi/(|V|a_{0})$ by \eqref{eq:ablb3}, and $c':=c-\kappa\mathbf 1$. Then
$\sum_{v}\widetilde A_{v}c'_{v}=0$, the differences $c'_{e^{+}}-c'_{e^{-}}=d_{e}$ are unchanged, and
by \eqref{eq:ablb2}
\[
\sum_{v}\widetilde A_{v}(c'_{v})^{2}=\sum_{v}\widetilde A_{v}c_{v}^{2}-\kappa^{2}\sum_{v}\widetilde A_{v}
\ \ge\ 1-C\Xi-C\Xi^{2}\ \ge\ 1-C\Xi .
\]
By Definition \ref{def:graph}, $\sum_{e}C_{e}d_{e}^{2}\ge\mu\sum_{v}\widetilde A_{v}(c'_{v})^{2}$,
an inequality which holds trivially if $c'=0$; hence
$\sum_{e}C_{e}d_{e}^{2}\ge\mu(1-C\Xi)\ge\mu-C\bar\mu\Xi$, which with \eqref{eq:ablb1} gives
$\cE\ge\mu-C\Xi$. Together with the upper bound this proves \eqref{eq:abstractmain}.

\emph{The last assertion.} By Lemma \ref{lem:takahashi}, $\lambda_{1}(\Sigma)\le2$. Suppose
$\lambda_{1}(\Sigma)<2$ and let $u$ be a first eigenfunction. By Lemma \ref{lem:CS}, $u$ is
$\cG$-invariant; since also $\int_{\Sigma}u=0$ and $u\ne0$, \eqref{eq:abstractmain} gives
$2>\lambda_{1}(\Sigma)=\cR(u)\ge\mu(\widetilde A,C)-C^{\ast}\Xi>2$, a contradiction. Hence
$\lambda_{1}(\Sigma)=2$.
\end{proof}

\subsection{Reduction of the invariant sector to the path graph}

\begin{proposition}\label{prop:graph}
For $m\ge m_{0}$,
\[
\Bigl|\ \min\Bigl\{\cR(u):u\in H^{1}(\Sigma)^{\cG}\setminus\{0\},\ \textstyle\int_{\Sigma}u=0\Bigr\}
-\mu(\widetilde A,C)\ \Bigr|\ \le\ \frac{C^{\ast}}{m}
\]
for a constant $C^{\ast}=C^{\ast}[N,k,\ell]$.
\end{proposition}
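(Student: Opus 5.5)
The plan is to apply Theorem \ref{thm:abstract} with $\mathsf G=P_{N}$, the blocks $\cB_{j}:=\cT[j]$, and the channels of the edge $e_{i}$ being the $M$ tunnels $\cK_{i}^{(s)}$ of the $i$-th layer. Each tunnel is oriented from the $i$-th to the $(i+1)$-st torus, reversing $t$ if necessary. The parametrisations are $\mathfrak k_{i}^{(s)}:=\Psi\circ\mathfrak g_{s}\circ\kappa_{i}$, and the data are $M_{e_{i}}=M=k\ell m^{2}$, $a_{e_{i}}=a_{i}$, $r_{e_{i}}=R$, $\rho_{e_{i}}(t)=\tau_{i}\cosh t$, $\varepsilon=\varepsilon_{m}=3m^{-2}$ and $\eta=Cm^{-2}$. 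With these choices the edge weights $C_{e}$ are the $C_{i}$ of Definition \ref{def:conductance}. The vertex weights $\widetilde A_{v}$ are the $\widetilde A_{j}$ of \eqref{eq:weights}, because $\Theta_{j}=M_{j}\pi R^{2}$ equals the sum of $M\pi R^{2}$ over the one or two adjacent layers. The conclusion then reads $|\min\cR-\mu|\le C^{\ast}\Xi$, and the task is to verify (D1)--(D6) and to show $\Xi\le C/m$.

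Verification of the hypotheses, in order:
\begin{enumerate}
\item[(D1)] This is \eqref{eq:decomp} together with the gluing along the end circles.
\item[(D2)] The metric comparison is \eqref{eq:comp}. For the decay, $\tau_{i}\cosh t\le\tau_{i}e^{|t|}=\tau_{i}e^{a_{i}}e^{-(a_{i}-|t|)}$, and $\tau_{i}e^{a_{i}}\le2\tau_{i}\cosh a_{i}=2R$ by \eqref{eq:ai}. Also $a_{i}\ge1$, since $a_{i}\ge cm^{2}$.
\item[(D3)] This is the description of the $\cG$-action at the start of Section \ref{sec:sector}.
\item[(D4)] This is Lemma \ref{lem:mass}: half-channel areas are $\pi R^{2}(1+O(m^{-2}))$, and $\Theta_{j}\le\tfrac12\widetilde A_{j}$ because $\vartheta_{j}\le1/(100\pi)$.
\item[(D5)] This is Corollary \ref{cor:poincare}, with $\mathsf P=(cm^{2})^{-1}$.
\item[(D6)] This is Lemma \ref{lem:trace}, with $\mathsf K=K[k,\ell]$ independent of $m$.
\end{enumerate}
The constants entering $C^{\ast}$ and $\Xi_{0}$ are controlled uniformly in $m$, using the monotonicity remark of Theorem \ref{thm:abstract}: $a_{0}\ge\pi^{2}$ and $a_{1}\le3\pi^{2}$ by Lemma \ref{lem:mass}, $c_{1}\le C[N]$ by Proposition \ref{prop:conductance}, and $\mathsf K$ is bounded. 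Hence $C^{\ast}$ and $\Xi_{0}$ depend only on $N,k,\ell$.

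Next I estimate the defect term by term:
\begin{itemize}
\item $\varepsilon+\eta=O(m^{-2})$.
\item $\mathsf P=O(m^{-2})$.
\item $\mathfrak b=(N-1)M^{1/2}R^{2}=(N-1)\sqrt{k\ell}\,m\cdot(10\ell m)^{-2}=O(m^{-1})$.
\item $\mathfrak c=(N-1)M^{-1/2}=(N-1)/(\sqrt{k\ell}\,m)=O(m^{-1})$.
\end{itemize}
So $\Xi\le C/m$, and enlarging $m_{0}$ gives $\Xi\le\Xi_{0}$. Theorem \ref{thm:abstract} then yields the claimed bound with constant $C^{\ast}\cdot C$.

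The main point needing care is the orientation bookkeeping in (D1) and (D3). The coordinate $t$ on $\kappa_{i}$ increases with height $\rz^{K}_{i}+\tau_{i}t$. Whether $t=+a_{i}$ lands on the $(i+1)$-st or the $i$-th torus depends on the stacking order, that is, on the sign of $\rz_{i+1}-\rz_{i}$. I would fix the edge orientation accordingly, or flip $t\mapsto-t$, which preserves (D2) since $\rho$ is even, and (D3) since $\cG$ fixes $t$. One must also check that the end circles correspond to $\partial B(p,R)$ via Lemma \ref{lem:initialtor}, so that the averages in (D6) agree with Convention \ref{conv:avg}. Everything else is a direct citation.
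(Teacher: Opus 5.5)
Your proposal is correct and follows the paper's proof essentially verbatim. It makes the same choice of $\mathsf G=P_N$, blocks $\cT[j]$, channels $\cK_i^{(s)}$ and data $(M,a_i,R,\tau_i\cosh t,3m^{-2})$, uses the same citations for (D1)--(D6), and computes $\mathfrak b,\mathfrak c=O(m^{-1})$, hence $\Xi=O(m^{-1})$, before invoking Theorem \ref{thm:abstract}. Your extra care about orienting each tunnel (flipping $t\mapsto -t$ if needed, which preserves (D2) and (D3)) is a harmless safeguard; the paper simply asserts in (D1) that $\Gamma_{i,-}^{(s)}\subset\cT[i]$ and $\Gamma_{i,+}^{(s)}\subset\cT[i+1]$.
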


\begin{proof}
We verify the hypotheses of Definition \ref{def:decomp} for $\Sigma$ and then apply Theorem
\ref{thm:abstract}. We take for $\mathsf G$ the path $P_{N}$ on $V=\{1,\dots,N\}$, with the edge
$e_{i}=\{i,i+1\}$ oriented from $i$ to $i+1$, and we set
\begin{gather*}
\cB_{j}:=\cT[j],\qquad \cK_{e_{i}}^{(s)}:=\cK_{i}^{(s)},\qquad M_{e_{i}}:=M=k\ell m^{2},\\
a_{e_{i}}:=a_{i},\qquad r_{e_{i}}:=R,\qquad \rho_{e_{i}}(t):=\tau_{i}\cosh t,
\end{gather*}
with $\varepsilon:=\varepsilon_{m}=3m^{-2}$ and with the coordinates, end circles and halves of
\eqref{eq:decomp}.

\emph{{\rm(D1)}.} This is the decomposition \eqref{eq:decomp}: the pieces have pairwise disjoint
interiors, the tunnels are pairwise disjoint, and $\cK_{i}^{(s)}$ meets $\cT[i]$ exactly along
$\Gamma_{i,-}^{(s)}$ and $\cT[i+1]$ exactly along $\Gamma_{i,+}^{(s)}$. By Lemma
\ref{lem:initialtor} and \eqref{eq:Pj}, $\partial\cT[j]$ is the disjoint union of the $M_{j}$ circles
lying over $\partial B(p,R)$, $p\in Z_{j}$, that is of the end circles of the tunnels adjoining
$\cT[j]$.

\emph{{\rm(D2)}.} The two-sided bound is the first line of \eqref{eq:comp}, and
$\varepsilon_{m}\le\tfrac12$ for $m\ge m_{0}$. Since $\tau_{i}\cosh a_{i}=R$ by \eqref{eq:ai} we
have $\tau_{i}\le2Re^{-a_{i}}$, whence
$\rho_{e_{i}}(t)=\tau_{i}\cosh t\le\tau_{i}e^{|t|}\le2Re^{-(a_{i}-|t|)}$. Finally $a_{i}\ge1$ for
$m\ge m_{0}$, by \eqref{eq:aiexp} and Corollary \ref{cor:rate}.

\emph{{\rm(D3)}} is the statement established at the beginning of this section.

\emph{{\rm(D4)}.} By Lemma \ref{lem:mass}, $|\cK_{i}^{(s),\pm}|=\pi R^{2}(1+O(m^{-2}))$, so
$\eta=Cm^{-2}\le1$ for $m\ge m_{0}$. The vertex $j$ is incident to one edge if $j\in\{1,N\}$ and to
two otherwise, so $\sum_{e\ni j}M_{e}\pi R^{2}=M_{j}\pi R^{2}$, which is the quantity $\Theta_{j}$
of \eqref{eq:weights}; hence $A_{j}$, $\Theta_{j}$ and $\widetilde A_{j}$ are those of
\eqref{eq:weights}. By \eqref{eq:weights} and Lemma \ref{lem:mass},
$\Theta_{j}=2\pi^{2}\vartheta_{j}\le\pi/50\le\tfrac12\pi^{2}\le\tfrac12\widetilde A_{j}$.

\emph{{\rm(D5)}} is Corollary \ref{cor:poincare}, with $\mathsf P=1/(cm^{2})$ and $c=c[k,\ell]$.

\emph{{\rm(D6)}} is Lemma \ref{lem:trace}, with $\mathsf K=K[k,\ell]$.

The conductance $C_{e_{i}}=M\pi/a_{i}$ is the $C_{i}$ of Definition \ref{def:conductance}, and the
weighted graph of Definition \ref{def:graph} is the weighted path occurring in Lemma \ref{lem:mu},
so that $\mu(\widetilde A,C)$ is the same quantity in both statements. By Lemma \ref{lem:mass} we
have $\pi^{2}\le a_{0}\le a_{1}\le3\pi^{2}$, and $c_{1}\le C[N]$ by Proposition
\ref{prop:conductance}; these bounds, together with $|V|=N$, $|E|=N-1$ and $\mathsf K=K[k,\ell]$,
depend only on $N$, $k$ and $\ell$, so the constants $C^{\ast}$ and $\Xi_{0}$ of Theorem
\ref{thm:abstract} do as well. Finally $\varepsilon$, $\eta$ and $\mathsf P$ are $O(m^{-2})$, while,
by \eqref{eq:XY},
\[
\mathfrak b=(N-1)M^{1/2}R^{2}=\frac{(N-1)\sqrt{k\ell}}{100\,\ell^{2}m},\qquad
\mathfrak c=(N-1)M^{-1/2}=\frac{N-1}{\sqrt{k\ell}\,m},
\]
so that $\Xi=O(m^{-1})$ with a constant depending only on $N,k,\ell$. Enlarging $m_{0}$ so that
$\Xi\le\Xi_{0}$, Theorem \ref{thm:abstract} gives the assertion.
\end{proof}

\begin{remark}
The weights $\widetilde A_{j}=A_{j}+\Theta_{j}$ are the areas of the tori \emph{before} the discs
are removed. Steps 2 and 3 in the proof of Theorem \ref{thm:abstract}, and the corresponding
computations in its upper bound, show that, to the order $O(m^{-1})$, the halves of the tunnels
restore exactly the removed discs, both in the $L^{2}$ norm and in the mean; no term of the size
$\vartheta_{j}$ of the removed area survives. This is what makes the reduction two-sided with error
$O(m^{-1})$ and is used in Theorem B; for Theorem A alone a one-sided bound with a loss of order
$\vartheta_{j}$ would suffice.
\end{remark}

\section{Proofs of Theorems A and B}\label{sec:proof}

\begin{proof}[Proof of Theorem B]
Part (i) is Proposition \ref{prop:b2}. Part (ii): the first identity is Proposition
\ref{prop:conductance}; the identity $C_{N}\lambda_{1}(L(P_{N}))/(2\pi^{2})=4$ is the computation
\[
\frac{C_{N}}{2\pi^{2}}\,\lambda_{1}\bigl(L(P_{N})\bigr)
=\frac{1}{2\pi^{2}}\cdot\frac{8\pi^{2}}{2-b_{2}[N]}\cdot\bigl(2-b_{2}[N]\bigr)=4 ,
\]
in which $\lambda_{1}(L(P_{N}))=2-b_{2}[N]$ is Lemma \ref{lem:incidence} together with the
identification $b_{2}[N]=\lambda_{\max}(A(P_{N-1}))$ of Proposition \ref{prop:b2}; the evaluation
$\mu(\widetilde A,C)=4+O(m^{-2})$ is Lemma \ref{lem:mu}. Part (iii) follows from
$\lambda_{1}(L(P_{N}))=2(1-\cos(\pi/N))=\pi^{2}N^{-2}-\tfrac{\pi^{4}}{12}N^{-4}+O(N^{-6})$ and from
Proposition \ref{prop:conductance}.
\end{proof}

\begin{proof}[Proof of Theorem A]
By Lemma \ref{lem:takahashi}, $\lambda_{1}(\Sigma)\le2$. Suppose $\lambda_{1}(\Sigma)<2$ and let $u$
be a first eigenfunction. The surface $\Sigma$ is closed, embedded, minimal and $\cG$-invariant, and
$\cG$ is generated by reflections in great spheres by Lemma \ref{lem:reflgen}; hence Lemma
\ref{lem:CS} gives $u\in H^{1}(\Sigma)^{\cG}$, while $\int_{\Sigma}u=0$ and
$\cR(u)=\lambda_{1}(\Sigma)<2$. On the other hand, by Proposition \ref{prop:graph} and Lemma
\ref{lem:mu}, for $m\ge m_{0}[N,k,\ell]$ every $\cG$-invariant function with vanishing mean satisfies
\[
\cR(u)\ \ge\ \mu(\widetilde A,C)-\frac{C^{\ast}}{m}\ \ge\ 4\bigl(1-Cm^{-2}\bigr)-\frac{C^{\ast}}{m}\ \ge\ 3,
\]
a contradiction. Hence $\lambda_{1}(\Sigma)=2$.

For the second assertion, note that the Laplacian of $\Sigma$ commutes with the isometric action of
$\cG$, so the closed subspace $L^{2}(\Sigma)^{\cG}$ of $\cG$-invariant functions is spanned by
$\cG$-invariant eigenfunctions, and the smallest nonzero eigenvalue occurring there is
$\min\{\cR(u):u\in H^{1}(\Sigma)^{\cG}\setminus\{0\},\ \int_{\Sigma}u=0\}$ by the variational
characterisation applied in that subspace, the kernel being spanned by the constants. By
Proposition \ref{prop:graph} and Lemma \ref{lem:mu} this minimum equals
$\mu(\widetilde A,C)+O(m^{-1})=4+O(m^{-1})$.

Finally, by Corollary \ref{cor:wiygulW} every surface produced by \cite[Theorem 6.50]{Wiygul} with $m\ge m_{0}$ is a stacked Clifford torus of type $(N,k,\ell,m)$, after $m_{0}$ has been enlarged to exceed the threshold of that corollary; hence Yau's conjecture holds for all these surfaces.
\end{proof}

\begin{remark}\label{rem:uniform}
The proof uses about the symmetry group only that it is generated by reflections in great spheres,
that it preserves each torus, and that it acts on functions without sign. Whether or not
$\Sigma$ admits further isometries exchanging the two sides of $\bT$, which for $k\ne\ell$ it does
not \cite[Remark 1.2]{Wiygul} and for $k=\ell$ is not decided in \cite{Wiygul}, plays no role; in particular the argument is the same for all $N\ge2$. The restrictions of the coordinate functions of $\bR^{4}$ are eigenfunctions with eigenvalue $2$ and are not $\cG$-invariant, in accordance with the value $4+O(m^{-1})$ of the invariant sector.
\end{remark}

\section{The balancing--spectrum identity}\label{sec:remarks}

\subsection{The mechanism}
Consider a chain of $N$ tori of area $2\pi^{2}$ joined by families of tunnels of total conductance
$C$. The mode which is constant on each torus has Rayleigh quotient
$\frac{C}{2\pi^{2}}\,\lambda_{1}(L(P_{N}))\sim\frac{C}{2\pi^{2}}\,\pi^{2}N^{-2}$. Were $C$ bounded
independently of $N$, this would tend to $0$ and, by Lemma \ref{lem:takahashi}, long chains would
violate Yau's conjecture. What excludes this is the balancing. The linearised balancing system says
that the vector of waist ratios is a positive eigenvector of $A(P_{N-1})$ with eigenvalue $b_{2}$,
hence its Perron vector (Lemma \ref{lem:perron}, Proposition \ref{prop:b2}); the incidence identity
(Lemma \ref{lem:incidence}) makes the spectral gap of the chain equal to $2-b_{2}$; and the
conformal length \eqref{eq:aiexp} of the tunnels is proportional to $2-b_{2}$, so their conductance is
proportional to $(2-b_{2})^{-1}$. The cancellation is exact for every $N$ and requires no evaluation of $b_{2}$. Equivalently, at the level of the limiting formulas, increasing $N$ decreases $2-b_{2}[N]$ and hence increases the basic waist scale: in this sense the necks of a longer chain
are forced to be fatter, by exactly the amount responsible for the cancellation. Remark \ref{rem:fiedler} adds that the
vector of heights of the tori is the Fiedler vector of $P_{N}$, which is why the mode governed by the gap, and not another block-constant mode, is the one realised by the configuration.

\subsection{The constant 4}

\begin{remark}\label{rem:jacobi}
The surviving constant is the coefficient of the Jacobi operator \eqref{eq:jacobi}, and this has a
direct explanation at the level of the linearised equation which does not pass through the balancing
conditions; we describe it informally, and it is not used in the proofs. Away from the waists, each torus of $\Sigma$ is a normal graph over a parallel torus of a function solving the linearised minimal surface equation $\cJ_{\bT}\varphi=\Delta\varphi+4\varphi=0$ to leading order, and so are the ends of the tunnels. Thus, at a formal level and away from the waist regions, the normal coordinate of $\Sigma$ with respect to $\bT$ is governed to leading order by $\Delta u+4u=0$, suggesting the limiting value $\cR(u)\to4=|A|^{2}+\Ric(\nu,\nu)$. The same explanation applies to doublings, where the corresponding mode is odd for the side-exchanging involution, and to doublings of the equatorial two-sphere, where the analogous constant is
$|A|^{2}+\Ric(\nu,\nu)=0+2=2$ and is realised exactly by the coordinate function vanishing on the base sphere. What Sections \ref{sec:balancing}--\ref{sec:sector} add is that the value $4$ is attained, for every $N$, by the mode governed by the \emph{spectral gap} of the chain, which is the statement needed for Theorem A: a mode with quotient close to $4$ would not by itself exclude lower block-constant modes in a long chain.
\end{remark}

\subsection{Trees}
The identity $\lambda_{1}(L(P_{N}))=2-\lambda_{\max}(A(P_{N-1}))$ is a property of paths. We record
the general statement.

\begin{proposition}\label{prop:trees}
Let $T$ be a finite tree with at least one edge, $L(T)$ its combinatorial Laplacian and $\Line(T)$
its line graph. Then
\[
\lambda_{1}\bigl(L(T)\bigr)=2+\lambda_{\min}\bigl(A(\Line(T))\bigr),\qquad
\lambda_{\max}\bigl(L(T)\bigr)=2+\lambda_{\max}\bigl(A(\Line(T))\bigr),
\]
and $\lambda_{1}(L(T))=2-\lambda_{\max}(A(\Line(T)))$ holds if and only if $T$ is a path.
\end{proposition}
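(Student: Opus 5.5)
The plan is to carry out the computation of Lemma \ref{lem:incidence} for a general tree, but with the unsigned incidence matrix in place of the oriented one, and then to use bipartiteness to pass between signed and unsigned Laplacians. Let $T$ have $n$ vertices and $n-1\ge1$ edges. Let $B$ be the $n\times(n-1)$ unsigned incidence matrix, $B_{v,e}=1$ if $v\in e$. Then $BB^{\mathsf T}=\operatorname{diag}(\deg)+A(T)=:Q(T)$ is the signless Laplacian. The entry $(B^{\mathsf T}B)_{ef}$ equals $2$ if $e=f$, equals $1$ if $e,f$ share a vertex, and equals $0$ otherwise. Hence $B^{\mathsf T}B=2I+A(\Line(T))$.

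Since $T$ is bipartite, let $S=\operatorname{diag}(\pm1)$ be the sign of the bipartition class. Then $SQ(T)S=L(T)$, so $Q(T)$ and $L(T)$ are isospectral. The matrix $B$ has rank $n-1$: $B^{\mathsf T}x=0$ means $x_{v}=-x_{w}$ on every edge, and by connectedness the solutions are the multiples of the alternating sign vector. So $Q(T)$ has a one-dimensional kernel, exactly as $L(T)$ does. As in Lemma \ref{lem:incidence}, the $n-1$ nonzero eigenvalues of $BB^{\mathsf T}$ are all the eigenvalues of $B^{\mathsf T}B$. Therefore the nonzero spectrum of $L(T)$ is $2+\operatorname{spec}A(\Line(T))$, with multiplicities. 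Taking the smallest and largest nonzero eigenvalues gives the two displayed identities.

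For the characterisation, the two formulas show that $\lambda_{1}(L(T))=2-\lambda_{\max}(A(\Line(T)))$ holds if and only if $\lambda_{\min}(A(\Line(T)))=-\lambda_{\max}(A(\Line(T)))$.
\begin{itemize}
\item If $T=P_{N}$, then $\Line(T)=P_{N-1}$, which is bipartite, so its adjacency spectrum is symmetric and the equality holds. This recovers Lemma \ref{lem:incidence}.
\item Conversely, suppose $T$ is not a path. Then some vertex has degree at least $3$, and $\Line(T)$ contains a triangle. It is connected because $T$ is connected, and it has at least one edge.
\end{itemize}

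The key step, and the main obstacle, is to show that for a connected graph $H$ with an edge, $\lambda_{\min}(A(H))=-\lambda_{\max}(A(H))$ forces $H$ to be bipartite. I would invoke the Perron--Frobenius theorem: for a nonnegative irreducible matrix $A$, an eigenvalue of modulus equal to the spectral radius $\rho$ other than $\rho$ itself exists only if $A$ is periodic. For adjacency matrices, $-\rho$ is an eigenvalue exactly when the graph is bipartite.

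Alternatively, I would argue directly. Let $x$ be a unit eigenvector for $-\rho$. Then
\[
\rho=\bigl|x^{\mathsf T}Ax\bigr|\le|x|^{\mathsf T}A|x|\le\rho .
\]
Equality forces $|x|$ to be the positive Perron vector. It also forces $x_{v}x_{w}<0$ on every edge, so $\operatorname{sgn}x$ is a proper $2$-colouring of $H$.

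Since $\Line(T)$ contains a triangle, it is not bipartite. So its adjacency spectrum is not symmetric at the extremes, and the identity fails whenever $T$ is not a path.
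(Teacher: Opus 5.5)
Your proposal is correct and follows essentially the same route as the paper: the unsigned incidence matrix gives $Q(T)=BB^{\mathsf T}$ and $B^{\mathsf T}B=2I+A(\Line(T))$, bipartiteness makes $Q(T)$ and $L(T)$ isospectral, and the path characterisation rests on the fact that $-\rho$ is an adjacency eigenvalue of a connected graph exactly when the graph is bipartite, proved by the same Perron-vector argument, combined with the triangle in $\Line(T)$ at a vertex of degree at least $3$. The one small variation is that you obtain $\operatorname{rank}B=n-1$ directly from the alternating-sign kernel of $B^{\mathsf T}$, whereas the paper deduces the one-dimensional kernel of $Q(T)$ from its isospectrality with $L(T)$; this does not affect the argument.
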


\begin{proof}
Let $|D|$ be the unsigned $|V|\times|E|$ incidence matrix of $T$. Then $|D||D|^{\mathsf T}=\operatorname{diag}(\deg)+A(T)=:Q(T)$
is the signless Laplacian and $|D|^{\mathsf T}|D|=2I+A(\Line(T))$, by the same entry computation as in
Lemma \ref{lem:incidence} but without signs. Since $T$ is bipartite, with parts $V_{1}\sqcup V_{2}$,
the diagonal matrix $S$ with entries $+1$ on $V_{1}$ and $-1$ on $V_{2}$ satisfies $SA(T)S=-A(T)$,
because every edge joins $V_{1}$ to $V_{2}$; hence $SQ(T)S=L(T)$ and $Q(T)$ and $L(T)$ have the same
spectrum (see \cite{BrouwerHaemers} for these standard facts). As $T$ is connected, $\ker L(T)$ is
one-dimensional, so $Q(T)$ has exactly $|V|-1=|E|$ nonzero eigenvalues counted with multiplicity;
these are the nonzero eigenvalues of $|D|^{\mathsf T}|D|$, an $|E|\times|E|$ matrix, which therefore
has no zero eigenvalue. Thus the spectrum of $2I+A(\Line(T))$ is the spectrum of $L(T)$ with $0$
removed, which gives the two identities.

For the last statement, the identity in question holds if and only if
$\lambda_{\min}(A(\Line(T)))=-\lambda_{\max}(A(\Line(T)))$. We use the following classical fact: for a
connected graph $H$ with adjacency matrix $A$ and spectral radius $\rho$, the number $-\rho$ is an
eigenvalue of $A$ if and only if $H$ is bipartite. Indeed, if $Av=-\rho v$ with $v\ne0$, then
$\rho|v|=|Av|\le A|v|$ entrywise, so $\langle A|v|,|v|\rangle\ge\rho\|v\|^{2}$ and $|v|$ maximises the
Rayleigh quotient of $A$; hence $A|v|=\rho|v|$, $|v|$ is the Perron vector, which is positive since
$A$ is irreducible, and equality $|Av|=A|v|$ forces, for every vertex $i$, all $v_{j}$ with $j\sim i$
to have the common sign $-\operatorname{sgn}v_{i}$. Thus $\operatorname{sgn}v$ is a proper
$2$-colouring of $H$. Conversely, if $H$ is bipartite with Perron vector $w$ and $S$ is the sign
matrix of the bipartition, then $A(Sw)=-SAw=-\rho\,Sw$. Now the line graph of a connected graph is
connected. If some vertex of $T$ has degree at least $3$, three edges at that vertex are pairwise
adjacent and form a triangle in $\Line(T)$, which is then not bipartite, and the identity fails. If
every vertex of $T$ has degree at most $2$, then $T$, being a connected tree, is a path $P_{N}$, and
$\Line(P_{N})=P_{N-1}$ is bipartite, so the identity holds. Hence the identity holds exactly when $T$
is a path.
\end{proof}

The algebraic content of the mechanism of Theorem B is the following statement about the operator
$D^{\mathsf T}D$ of a tree. As the proof records, $L(T)=D_{\sigma}D_{\sigma}^{\mathsf T}$ for every
orientation $\sigma$; we write $\lambda_{1}(L(T))$ and $\lambda_{\max}(L(T))$ for its smallest
nonzero and its largest eigenvalue.

\begin{proposition}[The balancing operator of a tree]\label{prop:balancing}
Let $T$ be a finite tree with $n\ge2$ vertices and edge set $E$, let $\sigma$ be an orientation of
its edges, let $D_{\sigma}$ be the corresponding incidence matrix as in Section \ref{sec:balancing},
and set $A_{\sigma}:=2I-D_{\sigma}^{\mathsf T}D_{\sigma}$, so that $(A_{\sigma})_{ee}=0$, while for
$e\ne f$ one has $(A_{\sigma})_{ef}=0$ if $e$ and $f$ are not adjacent and, if they meet at a vertex
$v$, $(A_{\sigma})_{ef}=+1$ if $v$ is the head of one of them and the tail of the other, and
$(A_{\sigma})_{ef}=-1$ otherwise. Then:
\begin{enumerate}
\item[(i)] $D_{\sigma}^{\mathsf T}D_{\sigma}$ is positive definite and its spectrum, with
multiplicities, is the spectrum of $L(T)$ with the eigenvalue $0$ removed; in particular every
eigenvalue $\kappa$ of $D_{\sigma}^{\mathsf T}D_{\sigma}$ satisfies $\kappa\ge\lambda_{1}(L(T))$.
\item[(ii)] $A_{\sigma}$ has nonnegative entries if and only if $T$ is a path and $\sigma$ orients it
consistently, and in that case $A_{\sigma}=A(\Line(T))$.
\item[(iii)] If $A_{\sigma}$ has nonnegative entries and some $\tau\in\bR^{E}$ with positive entries
satisfies $D_{\sigma}^{\mathsf T}D_{\sigma}\tau=\kappa\tau$, then $\kappa=\lambda_{1}(L(T))$ and
$\tau$ is a positive multiple of the Perron vector of $A(\Line(T))$.
\item[(iv)] For the star $T=K_{1,3}$ with all three edges oriented away from the centre one has
$D_{\sigma}^{\mathsf T}D_{\sigma}=2I+A(K_{3})$, whose spectrum is $\{4,1,1\}$; its eigenvectors with
positive entries are exactly the positive multiples of $(1,1,1)$, and the corresponding eigenvalue is
$\kappa=4=\lambda_{\max}(L(K_{1,3}))$.
\end{enumerate}
\end{proposition}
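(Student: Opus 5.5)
The plan is to deduce (i) from the singular-value argument of Lemma \ref{lem:incidence}, (ii) from a sign analysis of the entries, (iii) from Perron--Frobenius together with Proposition \ref{prop:trees}, and (iv) by direct computation.

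For (i) I will copy the proof of Lemma \ref{lem:incidence} for an arbitrary tree and orientation. The entry computation $(D_\sigma D_\sigma^{\mathsf T})_{vw}=\sum_e (D_\sigma)_{ve}(D_\sigma)_{we}$ gives $L(T)$ regardless of $\sigma$: an edge contributes $(-1)(+1)=-1$ off the diagonal and $1$ on it. Since $T$ is connected, $\ker D_\sigma^{\mathsf T}$ is the constants, so $\operatorname{rank}D_\sigma=n-1=|E|$. Hence $D_\sigma^{\mathsf T}D_\sigma$, of size $|E|\times|E|$, is positive definite. Its eigenvalues are the squares of the nonzero singular values of $D_\sigma$, that is, the nonzero spectrum of $L(T)$. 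The bound $\kappa\ge\lambda_1(L(T))$ follows.

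The entry description of $A_\sigma$ comes from the same computation. If $e,f$ meet at $v$, then $(D_\sigma^{\mathsf T}D_\sigma)_{ef}=(D_\sigma)_{ve}(D_\sigma)_{vf}$ is $-1$ when $v$ is the head of one and the tail of the other, and $+1$ otherwise; negating gives $A_\sigma$. For (ii), nonnegativity means that at every vertex any two incident edges form a head/tail pair. If some vertex $v$ had degree $\ge3$, the pigeonhole principle would give two of its edges with the same role at $v$, and their entry would be $-1$. So every vertex has degree $\le2$, and the tree is a path. Along a path, the condition at each interior vertex says consecutive edges are oriented the same way, which is the consistent orientation. In that case every adjacent pair has entry $+1$, so $A_\sigma=A(P_{N-1})=A(\Line(T))$. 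The converse is Lemma \ref{lem:incidence}.

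For (iii), by (ii) we have $D_\sigma^{\mathsf T}D_\sigma=2I-A(\Line(T))$ with $\Line(T)$ a path, so the matrix is nonnegative and irreducible. A positive $\tau$ with $D_\sigma^{\mathsf T}D_\sigma\tau=\kappa\tau$ is a positive eigenvector of $A(\Line(T))$ with eigenvalue $2-\kappa$. By Perron--Frobenius, as in Proposition \ref{prop:b2}, $2-\kappa=\lambda_{\max}(A(\Line T))$ and $\tau$ is a positive multiple of the Perron vector. Then $\kappa=2-\lambda_{\max}$, which equals $\lambda_1(L(T))$ by Proposition \ref{prop:trees} for paths (or Lemma \ref{lem:incidence}).

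For (iv), orient the three edges of $K_{1,3}$ away from the centre. Every pair of edges then meets at the centre, where all three are tails, so every off-diagonal entry of $A_\sigma$ is $-1$. Hence $D_\sigma^{\mathsf T}D_\sigma=2I+J-I=I+J=2I+A(K_3)$, where $J$ is the all-ones matrix. Its spectrum is $\{4,1,1\}$: $(1,1,1)$ has eigenvalue $4$, and the $1$-eigenspace is the sum-zero plane. A positive eigenvector cannot be $1$-eigen, because a vector with positive entries has positive coordinate sum, whereas the $1$-eigenspace consists of the sum-zero vectors. So the positive eigenvectors are the multiples of $(1,1,1)$, with $\kappa=4$. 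Finally $L(K_{1,3})$ has spectrum $\{0,1,1,4\}$, so $4=\lambda_{\max}$. The only mildly delicate step is the pigeonhole sign argument in (ii); the rest is bookkeeping.
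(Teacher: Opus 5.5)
Your proposal is correct and follows the paper's proof essentially step for step: the entry computation and singular-value/rank argument for (i), the pigeonhole sign argument at a vertex of degree at least $3$ for (ii), Perron--Frobenius applied to $A_\sigma=A(\Line(T))$ for (iii), and direct computation for (iv). The paper reads off $\kappa=\lambda_1(L(T))$ in (iii) from part (i) rather than from Lemma \ref{lem:incidence}, which amounts to the same thing. There is one wording slip in (iii): the matrix that is nonnegative and irreducible is $A(\Line(T))$, not $D_\sigma^{\mathsf T}D_\sigma=2I-A(\Line(T))$.
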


\begin{proof}
The entries of $D_{\sigma}^{\mathsf T}D_{\sigma}$ are computed as in Lemma \ref{lem:incidence}:
$(D_{\sigma}^{\mathsf T}D_{\sigma})_{ee}=2$, while for $e\ne f$ meeting at $v$ the only nonzero term of
$\sum_{w}D_{we}D_{wf}$ is the one at $w=v$, which equals $-1$ if $v$ is the head of one of $e,f$ and
the tail of the other and $+1$ if it is the head of both or the tail of both; this gives the stated
form of $A_{\sigma}$. Likewise $(D_{\sigma}D_{\sigma}^{\mathsf T})_{vw}$ equals the degree of $v$ if
$v=w$, equals $(+1)(-1)=-1$ if $\{v,w\}$ is an edge and vanishes otherwise, whatever the orientation;
so $D_{\sigma}D_{\sigma}^{\mathsf T}=L(T)$.

(i) Since $T$ is connected, $\ker D_{\sigma}^{\mathsf T}$ consists of the constant vectors, so
$\operatorname{rank}D_{\sigma}=n-1=|E|$ and the $|E|\times|E|$ matrix
$D_{\sigma}^{\mathsf T}D_{\sigma}$ is positive definite. The nonzero eigenvalues of
$D_{\sigma}D_{\sigma}^{\mathsf T}$ and $D_{\sigma}^{\mathsf T}D_{\sigma}$ coincide with
multiplicities, and $L(T)$ has exactly $n-1=|E|$ of them; these are therefore all the eigenvalues of
$D_{\sigma}^{\mathsf T}D_{\sigma}$.

(ii) Suppose $A_{\sigma}\ge0$ entrywise. If some vertex $v$ had degree at least $3$, then among three
edges at $v$ two would be both incoming or both outgoing at $v$, and the corresponding entry of
$A_{\sigma}$ would be $-1$. Hence every vertex of $T$ has degree at most $2$, and $T$, being a
connected tree, is a path; and each vertex of degree $2$ must be the head of one of the two edges
meeting there and the tail of the other, so $\sigma$ orients the path consistently. Conversely, for a consistently oriented path every pair of
adjacent edges meets at a vertex which is the head of one and the tail of the other, so
$(A_{\sigma})_{ef}=1$ exactly when $e$ and $f$ are adjacent, that is $A_{\sigma}=A(\Line(T))$.

(iii) By (ii), $A_{\sigma}=A(\Line(T))$ with $\Line(T)=P_{n-1}$, which is connected, so $A_{\sigma}$
is nonnegative and irreducible. From $D_{\sigma}^{\mathsf T}D_{\sigma}\tau=\kappa\tau$ we get
$A_{\sigma}\tau=(2-\kappa)\tau$ with $\tau>0$, so by the Perron--Frobenius theorem $\tau$ is, up to a
positive multiple, the Perron vector and $2-\kappa=\lambda_{\max}(A_{\sigma})$. Hence $\kappa$ is the
smallest eigenvalue of $D_{\sigma}^{\mathsf T}D_{\sigma}$, which is $\lambda_{1}(L(T))$ by (i).

(iv) The centre is the tail of all three edges, so $(A_{\sigma})_{ef}=-1$ for every pair of distinct
edges, that is $A_{\sigma}=-A(K_{3})$ and $D_{\sigma}^{\mathsf T}D_{\sigma}=2I+A(K_{3})$. The
spectrum of $A(K_{3})$ is $\{2,-1,-1\}$, whence the spectrum $\{4,1,1\}$; the eigenspace for $4$ is
spanned by $(1,1,1)$, while the eigenspace for $1$ consists of the vectors with vanishing sum, none
of which has positive entries. Finally the spectrum of $L(K_{1,3})$ is $\{0,1,1,4\}$, so
$\lambda_{\max}(L(K_{1,3}))=4$.
\end{proof}

\begin{remark}\label{rem:linearmodel}
Proposition \ref{prop:trees} delimits the reach of the mechanism of Theorem B, and Proposition
\ref{prop:balancing} isolates its algebraic core. In the linear model
which produced \eqref{eq:system}, blocks $v$ of a base surface with Jacobi constant $q$ and area
$|\Sigma_{0}|$ carry heights $h_{v}$, necks $e$ carry waists $\tau_{e}$ and a common leading
conformal length $a$, and the two balancing relations read
$q|\Sigma_{0}|h=2\pi M\,D_{\sigma}\tau$ and $D_{\sigma}^{\mathsf T}h=2a\tau$, where $D_{\sigma}$ is the
incidence matrix of $T$ with each edge oriented from the lower to the upper block, as in Lemma
\ref{lem:incidence} for the chain. Hence
$D_{\sigma}^{\mathsf T}D_{\sigma}\tau=\kappa\tau$ with $\kappa=q|\Sigma_{0}|a/(\pi M)$, the total
conductance is $C=M\pi/a=q|\Sigma_{0}|/\kappa$, and the lowest block-constant Rayleigh quotient is
$q\,\lambda_{1}(L(T))/\kappa$. Now Proposition \ref{prop:balancing}(ii)--(iii) applies: the matrix
$A_{\sigma}=2I-D_{\sigma}^{\mathsf T}D_{\sigma}$ has nonnegative entries exactly when every block has
at most one neighbour on each side, that is when $T$ is a path traversed consistently; then the
positivity of $\tau$ forces $\kappa=\lambda_{1}(L(T))$, and the quotient is $q$. For a
star with three blocks on the same side of the centre one finds instead, by Proposition
\ref{prop:balancing}(iv), $\kappa=4=\lambda_{\max}(L(K_{1,3}))$ and the quotient
$q\lambda_{1}/\lambda_{\max}=q/4$, which for the Clifford torus is $1<2$; such a configuration is of
course not realisable by parallel tori. The exact cancellation is therefore an intrinsically
one-dimensional phenomenon.
\end{remark}

\begin{problem}\label{prob:graphs}
Let a closed embedded minimal surface be produced by gluing congruent blocks of a base surface
$\Sigma_{0}$ with constant $q=|A|^{2}+\Ric(\nu,\nu)$ along the edges of a finite graph $G$, the waist
parameters being determined by linearised balancing conditions. Is the lowest Rayleigh quotient of
the block-constant modes always $q\,\lambda_{1}(L(G))/\kappa+o(1)$, with $\kappa$ the eigenvalue of the
balancing vector as in Remark \ref{rem:linearmodel}, and for which graphs $G$ and sign structures can
such surfaces exist? The desingularisations of intersecting Clifford tori of \cite{KWlow}, in which
several sheets meet along a common locus, are natural test cases; for several of these families the
surfaces are determined by their symmetries and topology \cite{KWlaw}, so that the combinatorial
datum $G$ is intrinsic.
\end{problem}

\subsection{First eigenfunctions}
Yau's conjecture implies through \cite{MontielRos} that a minimal embedding of a closed surface in
$\bS^{3}$ is by first eigenfunctions, and hence area bounds of the type of \cite{YangYau}. Theorem A
places the surfaces of \cite{Wiygul} among the known minimal embeddings by first eigenfunctions, a
class enlarged considerably by the equivariant eigenvalue optimisation of \cite{KKMS} and by
\cite{KusnerMcGrath}. Our argument gives the eigenvalue but not the eigenspace.

\begin{problem}
Is the first eigenspace of a stacked Clifford torus spanned by the restrictions of the coordinate
functions of $\bR^{4}$?
\end{problem}

The argument of Section \ref{sec:sector} shows that no $\cG$-invariant first eigenfunction exists;
a positive answer would require controlling the sectors of the other irreducible representations of
$\cG$, on which the reflection lemma gives no information.

\end{document}